\newcommand{\figdir}{-figs/}
\pgfplotsset{compat=newest,
    width=12cm,
    height=7cm,
    every axis plot/.append style={thick},
}
\theoremstyle{plain}
\newtheorem{theorem}{Theorem}[section]
\newtheorem{lemma}[theorem]{Lemma}
\newtheorem{corollary}[theorem]{Corollary}
\theoremstyle{definition}
\theoremstyle{remark}
\newtheorem{remark}[theorem]{Remark}
\renewcommand*{\backref}[1]{\ifx#1\relax \else Page #1 \fi}
\renewcommand*{\backrefalt}[4]{%
  \ifcase #1 \footnotesize{(Not cited.)}%
  \or        \footnotesize{(Cited on page~#2.)}%
  \else      \footnotesize{(Cited on pages~#2.)}%
  \fi
}
\crefname{Inequality}{Inequality}{Inequalities}
\author{Michael Diao\thanks{Department of  Electrical Engineering and Computer Science, Massachusetts Institute of Technology. \texttt{diao@mit.edu} }
\and Krishnakumar Balasubramanian\thanks{Department of Statistics, University of California, Davis. \texttt{kbala@ucdavis.edu}} 
\and Sinho Chewi\thanks{Department of Mathematics, Massachusetts Institute of Technology. \texttt{schewi@mit.edu}}
\and Adil Salim\thanks{Microsoft Research. \texttt{adilsalim@microsoft.com} }
}
\begin{document}
\title{Forward-Backward Gaussian Variational Inference \\
via JKO in the Bures--Wasserstein Space}

\maketitle

\begin{abstract}
    \Ac{VI} seeks to approximate a target distribution $\pi$ by an element of a tractable family of distributions. 
    Of key interest in statistics and machine learning is Gaussian VI,
    which approximates $\pi$ by minimizing
    the \ac{KL} divergence to\ $\pi$ over the space of Gaussians.
    In this work, we develop %
    the (Stochastic) Forward-Backward Gaussian Variational Inference (FB--GVI) algorithm to solve Gaussian VI.
    Our approach exploits the composite structure of the \ac{KL} divergence, which can be written as the sum of a smooth term (the potential) and a non-smooth term (the entropy) over the \ac{BW} space of Gaussians endowed with the Wasserstein distance.
    For our proposed algorithm,
    we obtain state-of-the-art convergence guarantees when $\pi$ is log-smooth and log-concave,
    as well as the first convergence guarantees to first-order stationary solutions when $\pi$ is only log-smooth.
\end{abstract}

\section{Introduction}

Variational inference (VI)~\citep{blei2017variational,knoblauch2022optimization} has emerged as a tractable alternative to computationally demanding Monte Carlo Markov Chain (MCMC) methods. Of particular interest is the problem of Gaussian VI, in which we approximate a given distribution $\pi \propto \exp(-V)$, where $V$ is a smooth function, by the solution to
\begin{equation}
\label{eq:original}
    \argmin_{\mu\in \PG(\RR^d)} \KL{\mu}{\pi}\,,
\end{equation}
where $\mathsf{KL}$ denotes the Kullback--Leibler divergence and $\PG(\RR^d)$ the set of Gaussian distributions over $\RR^d$ (see Section~\ref{sec:pb} for formal definitions). Indeed, Gaussian VI has shown superior performance in practice, especially in the presence of large datasets, see, for example,~\citet{barber1997ensemble,seeger1999bayesian,honkela2004unsupervised,opper2009variational,quiroz2022gaussian}. 

In the literature on Gaussian VI, strong \textit{statistical} properties have been shown for the solutions to Problem~\eqref{eq:original}, see, for example,~\citet{cherief2019generalization, alquier2020concentration,katsevich2023approximation}. For instance, \citet{katsevich2023approximation} showed that Gaussian VI outperforms Laplace approximation for the estimation of the mean of $\pi$. Besides, consider the case where $\pi$ is the posterior distribution of a sufficiently regular Bayesian model. Then, the Bernstein--von Mises theorem (see~\citet[Chapter 10]{van2000asymptotic} and recent non-asymptotic results~\citep{kasprzak2022good,spokoiny2022dimension} state that $\pi$ is well-approximated by a Gaussian distribution, with the mean given by any asymptotically efficient estimator of the true parameter, and covariance matrix given by the inverse Fisher information matrix. These results collectively provide abundant motivation for efficiently computing the best Gaussian approximation of the target $\pi$ (Problem~\eqref{eq:original}).

We hence focus on the \textit{optimization} aspect of Gaussian VI, \textit{i.e.}, solving Problem~\eqref{eq:original}. Several approaches have been proposed which we summarize in the related works (Section~\ref{scn:related}). In particular,~\citet{lambert2022variational} recently proposed an algorithm for Gaussian VI that can be seen as an analog of stochastic gradient descent for Problem~\eqref{eq:original} over the space $\PG(\RR^d)$ endowed with the Wasserstein distance, called the Bures--Wasserstein (BW) space. This viewpoint was inspired from the theory of gradient flows over the Wasserstein space, \textit{i.e.}, the space of distributions endowed with the Wasserstein distance~\citep{jordan1998variational,ambrosio2008gradient}. Moreover, this viewpoint has been instrumental for many problems in probabilistic inference (see the related works in Section~\ref{scn:related}). However, from an optimization standpoint, the approach of~\citet{lambert2022variational} relying on the BW gradient of the objective is not the most natural one.
Indeed, over the BW space, the objective functional $\KL{\cdot}{\pi}$ is composite: it can be canonically decomposed as the sum of a ``smooth'' term called the potential and a ``non-smooth'' term called the entropy.

The composite nature of the KL divergence has inspired more than two decades of research on forward-backward methods on the Wasserstein space~\citep[see, for example,][]{jordan1998variational,bernton2018jko, sampling-as-opt, WPGA}.
Unfortunately, this line of work is obstructed by the computational intractability of the so-called JKO operator~\citep{jordan1998variational} (\textit{i.e.}, the analog of the proximal operator over the Wasserstein space) of the entropy.

In this paper, we introduce a novel algorithm called (Stochastic) Forward-Backward Gaussian Variational Inference (FB--GVI).
Similarly to~\citet{lambert2022variational}, the rich differential and geometric structure of the BW space comprises the linchpin of our approach.
However, (Stochastic) FB--GVI additionally incorporates celebrated ideas from the literature on composite and non-smooth optimization.
A key insight in this work is that the JKO operator for the entropy, when restricted to the BW space, admits a closed form~\citep{sampling-as-opt}, and hence leads to an implementable (stochastic) forward-backward (or proximal gradient) algorithm for Gaussian VI\@.
In turn, it yields new state-of-the-art computational guarantees for Gaussian VI under a variety of standard assumptions.

We summarize our \textbf{contributions} below.

\begin{itemize}
\item We propose a new (stochastic) forward-backward algorithm, (Stochastic) FB--GVI, to solve Problem~\eqref{eq:original}. The algorithm relies on a closed-form formula for the JKO operator of the entropy over the BW space.
\item We prove state-of-the-art convergence rates for Gaussian VI via our algorithm, leveraging recent techniques of optimization over the space of probability measures~\citep{ambrosio2008gradient}.  
\end{itemize}

The rest of our paper is organized as follows. In Section~\ref{sec:background}, we clarify our notation and provide some background material on stochastic and non-smooth composite optimization. In Section~\ref{sec:BW}, we describe the geometric and differential structure of the BW space, which is key to performing optimization over $\PG(\RR^d)$. Then, in Section~\ref{sec:sfbgvi} we focus on Problem~\eqref{eq:original} and propose our algorithms: FB--GVI and Stochastic FB--GVI.
The convergence of our algorithms is studied in Section~\ref{sec:convergence} and preliminary simulation results are provided in Section~\ref{scn:exp}.
Finally, we discuss related works in Section~\ref{scn:related} and conclude in Section~\ref{sec:ccl}. A Jupyter notebook containing code for our experiments can be found at
\begin{center}
 \url{https://github.com/mzydiao/FBGVI/blob/main/FBGVI-Experiments.ipynb}.   
\end{center}

\section{Background}
\label{sec:background}

In this section, we clarify our notation and provide background on (stochastic) forward-backward algorithms.

\subsection{Notation}

We will denote the space of real symmetric $d\times d$ matrices by $\mathbf{S}^d$ and the space of real positive definite $d\times d$ matrices by $\mathbf{S}_{++}^d$. Additionally, we denote the $d\times d$ dimensional identity matrix by $I$. Throughout, $\mP_2(\RR^d)$ is the set of probability measures $\mu$ over $\RR^d$ with finite second moment $\int \|x\|^2\, \dd\mu(x) < \infty$.
Let $\mu \in \mP_2(\RR^d)$. The space $L^2(\mu)$ is the Hilbert space of Borel functions $f: \RR^d \to \RR^d$ such that $$\EE_\mu \|f\|^2 = \int \|f(x)\|^2\,\dd\mu(x) < \infty\,,$$ endowed with the inner product $$\langle f,g \rangle_\mu \defeq \int \langle f(x),g(x) \rangle\,\dd\mu(x)$$ and the associated norm $\|f\|_\mu = \sqrt{\langle f, f \rangle_\mu}$. In particular, the identity map $\id: \RR^d \to \RR^d$ belongs to $L^2(\mu)$. If $\mu \in \mP_2(\RR^d)$ and $T \in L^2(\mu)$, the pushforward measure of $\mu$ by $T$ is denoted by $T_\# \mu$. This pushforward measure satisfies $\int \varphi\, \dd T_\# \mu = \int \varphi(T(x))\,\dd \mu(x)$ for any measurable function $\varphi : \RR^d \to \RR_+$.
The subset of $\mP_2(\RR^d)$ of all Gaussian distributions with positive definite covariance matrix is denoted by $\PG(\RR^d)$.
For an element $\mu \in \PG(\RR^d)$, we denote its mean by $m_\mu$ and its covariance matrix by $\Sigma_\mu$.
The notation $\mN(m,\Sigma)$ refers to the Gaussian distribution with mean $m \in \RR^d$ and covariance matrix $\Sigma \in \mathbf{S}_{++}^{d}$.

\subsection{Stochastic and non-smooth convex optimization over $\RR^d$}

Before introducing optimization concepts on the space of probability measures, we review some details of stochastic and convex non-smooth optimization over $\RR^d$. First, a function $V: \RR^d \to \RR$ is $\beta$-smooth if $V$ is twice continuously differentiable and its Hessian $\nabla^2 V(x)$ is bounded by $\beta$ in the operator norm, for every $x \in \RR^d$. In particular, $V$ is differentiable and its gradient $\nabla V$ is $\beta$-Lipschitz. In addition, $V$ satisfies the Taylor inequality
\begin{equation}
\label{eq:smooth}
\left| V(x + h) - V(x) - \langle \nabla V(x), h \rangle \right| \leq \frac{\beta}{2}\,\|h\|^2\,.
\end{equation}

Consider the optimization problem
\begin{equation}
\label{eq:optim-euc}
\min_{x \in \RR^d}{\{V(x) + H(x)\}}\,,
\end{equation}
where $V: \RR^d \to \RR$ is $\beta$-smooth and $H: \RR^d \to \RR$ is convex but potentially non-smooth. To simplify the presentation, we also assume that $V$ is convex. Because of the non-smoothness of $H$, the (sub)gradient descent algorithm applied to Problem~\eqref{eq:optim-euc} may not converge to a minimizer of $V+H$. However, in many situations of interest, the user has closed-form expressions\footnote{See \url{proximity-operator.net}.} for the proximal operator of $H$ defined by 
\begin{equation}
\label{eq:prox}
\prox_{H}(x) \defeq \argmin_{y \in \RR^d}{\Bigl\{H(y) + \frac12\, \|x-y\|^2\Bigr\}}\,.
\end{equation}

Given access to the proximal operator of $H$ and the gradient of $V$, the forward-backward algorithm~\citep{bauschke2011convex} is one of the most natural and efficient techniques to solve Problem~\eqref{eq:optim-euc}.
The forward-backward algorithm is written as
\begin{equation}
\label{eq:fb}
x_{k+1} = \prox_{\eta H}(x_{k} - \eta\, \nabla V(x_k))\,.
\end{equation}
In machine learning applications, the user often does not have direct access to $\nabla V(x_k)$ because computing the gradient of $V$ is expensive. Instead, the user has access to a cheaper stochastic estimator $\hat{g}_k$ of $\nabla V(x_k)$. In this situation, the stochastic forward-backward algorithm has been proven to be an efficient alternative to the forward-backward algorithm~\citep{atchade2017perturbed, bianchi2019constant, gorbunov2020unified}. In the stochastic forward-backward algorithm, $\nabla V(x_k)$ is replaced by $\hat{g}_k$ as follows:
\begin{equation}
\label{eq:sfb}
x_{k+1} = \prox_{\eta H}(x_{k} - \eta\, \hat{g}_k)\,.
\end{equation}
We will now adapt the (stochastic) forward-backward algorithm~\eqref{eq:sfb} to the BW space. In particular, the iterate at step $k$ will no longer be a random variable $x_k$, but rather a random Gaussian distribution $\iterate_k$. Despite the fact that the BW space is not a Euclidean space, its inherent structure still allows us to perform optimization, as explained next.

\section{The Bures--Wasserstein space}
\label{sec:BW}
A detailed presentation of the Wasserstein space and its geometry, which in turn enables optimization over that space, can be found in~\citet{ambrosio2008gradient}. In this section, we quickly review the BW space and its geometry, hence providing the requisite tools to perform optimization over the BW space and solve Problem~\eqref{eq:original}. %
We start with formal definitions of the Wasserstein and BW spaces.

\subsection{Geometry of the BW space}

The \textit{Wasserstein space} is the metric space $\mP_2(\RR^d)$ endowed with the $2$-Wasserstein distance $W_2$ (which we simply refer to as the Wasserstein distance). We recall that the Wasserstein distance is defined for every $\mu,\nu \in \mP_2(\RR^d)$ by
\begin{equation}
\label{eq:def-wass}
W_2^2(\mu,\nu) = \inf_{\gamma \in \eu C(\mu,\nu)}\int \|x-y\|^2\, \dd\gamma(x,y)\,,
\end{equation}
where $\eu C(\mu,\nu)$ is the set of couplings between $\mu$ and $\nu$. 
The BW space is the metric space $\PG(\RR^d)$ endowed with the Wasserstein distance $W_2$. In other words, the BW space is the subset of the Wasserstein space consisting of all Gaussian distributions with positive definite covariance matrix. 

Given $\mu,\nu \in \PG(\RR^d)$, there exists a unique optimal transport map from $\mu$ to $\nu$, \textit{i.e.}, a map $T : \RR^d \to \RR^d$ such that $T_\# \mu = \nu$ and 
\begin{equation}
W_2^2(\mu,\nu) = \int \|x-T(x)\|^2\,\dd\mu(x)\,.
\end{equation}
In other words, the coupling $(\id,T)_\# \mu$ belongs to $\eu C(\mu,\nu)$ and attains the infimum in~\eqref{eq:def-wass}. Moreover, since $\mu$ and $\nu$ are Gaussian, $T$ is an affine map with symmetric linear part, \textit{i.e.}, can be written as $T(x) = Sx + b$ where
$S \in \mathbf{S}^d$
and $b \in \RR^d$%
~\citep{olkpuk1982otgaussian}. In particular, the BW space is a Riemannian manifold where at each $\mu \in \PG(\RR^d)$, the tangent space $\Tan_{\mu} \PG(\RR^d)$ corresponds to the space of $d$-dimensional affine maps with symmetric linear part.
Using that $\mu \in \mP_2(\RR^d)$, $T_\# \mu \in \mP_2(\RR^d)$ implies $T \in L^2(\mu)$. Therefore, $\Tan_{\mu} \PG(\RR^d)$ is naturally endowed with the $L^2(\mu)$ inner product, making $\Tan_{\mu} \PG(\RR^d)$ a finite-dimensional subspace of $L^2(\mu)$.

\subsection{Optimization over the BW space}

In this section, we review the differential structure of the BW space. Further background on differential calculus over the BW space is provided in \Cref{sec:bw-grad-comp}.

\subsubsection{A ``smooth'' functional}

Consider a functional $\mF : \PG(\RR^d) \to \RR$. We say that $\mF$ is differentiable at $\mu$ if there exists $g_{\mu} \in \Tu$ such that for every affine map $h$, 
\begin{equation}
\label{eq:BWdef}
\mF((\id + t h)_{\#} \mu) = \mF(\mu) + t\, \langle g_{\mu}, h \rangle_{\mu} + o(t)\,.
\end{equation}
In this case, $g_{\mu}$ is unique, called the Bures--Wasserstein gradient of $\mF$ at $\mu$, and denoted $\nabla_{\mathsf{BW}} \mF(\mu) = g_{\mu}$. Given a $\beta$-smooth function $V : \RR^d \to \RR$, the potential energy functional $\mV \colon \PG(\RR^d)\to\RR$, defined by
\begin{equation}\label{eq:potential_energy}
\mV(\mu) \defeq \int V\, \dd \mu
\end{equation}
for every $\mu \in \PG(\RR^d)$,
is a useful example of a differentiable functional over the BW space.

The next result states that the potential is differentiable and gives a formula for its BW gradient. The formula for the BW gradient can be obtained by a straightforward adaptation of~\citet[Section C.1]{lambert2022variational} (see also~\Cref{sec:BWgradpot}).
Besides, the differentiability of $\mV$ is well-established in the literature on the Wasserstein space~\citep[Theorem 10.4.13]{ambrosio2008gradient}; we adapt this to the BW space.

\begin{lemma}[BW gradient of the potential]
\label{lem:smooth}
    Consider the potential functional $\mV$ in~\eqref{eq:potential_energy} where $V$ is $\beta$-smooth. Then, $\mV$ is differentiable at $\mu$ and the following Taylor inequality holds: for $h$ affine,
    \begin{equation}
    \label{eq:Taylor2}
\left| \mV((\id+ h)_\# \mu) - \mV(\mu) - \langle \nabla_{\mathsf{BW}}\mV(\mu),h \rangle_{\mu} \right| \leq \frac{\beta}{2}\,\|h\|_{\mu}^2\,.
\end{equation}

Moreover, the BW gradient of $\mV$ is known in closed form:
    \begin{equation}
    \label{eq:BWgrad}
    \nabla_{\mathsf{BW}}\mV(\mu): x \mapsto \EE_\mu \nabla V + (\EE_\mu \nabla^2 V)(x - m_{\mu})\,,
    \end{equation}
    where $m_{\mu} = \int x \, \dd\mu(x)$ is the mean of $\mu$.
\end{lemma}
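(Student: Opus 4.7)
The plan is to reduce the lemma to the pointwise Taylor inequality~\eqref{eq:smooth} for $V$, combined with Gaussian integration by parts (Stein's lemma) to rewrite the resulting linear term as an inner product on the tangent space.

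First I would use the change-of-variables formula for the pushforward to write $\mV((\id+h)_\# \mu) = \int V(x+h(x))\,\dd\mu(x)$. Applying~\eqref{eq:smooth} at each $x$ with increment $h(x) \in \RR^d$ and integrating against $\mu$ (using the triangle inequality for integrals) yields
\[
\Bigl| \mV((\id+h)_\# \mu) - \mV(\mu) - \int \langle \nabla V(x), h(x)\rangle \,\dd\mu(x) \Bigr| \leq \frac{\beta}{2} \int \|h(x)\|^2 \,\dd\mu(x) = \frac{\beta}{2}\,\|h\|_\mu^2.
\]
This already delivers the Taylor inequality~\eqref{eq:Taylor2}, provided the linear term is shown to equal $\langle g_\mu, h\rangle_\mu$ for the candidate $g_\mu(x) = \EE_\mu \nabla V + (\EE_\mu \nabla^2 V)(x - m_\mu)$.

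The core step is thus identifying the linear functional $h \mapsto \int \langle \nabla V, h\rangle\,\dd\mu$ on $\Tan_{\mu} \PG(\RR^d)$. Writing $h(x) = Sx + b$ with $S$ symmetric, I would decompose $Sx = S(x-m_\mu) + Sm_\mu$. The $b$ and $Sm_\mu$ pieces contribute $\langle \EE_\mu \nabla V, Sm_\mu + b\rangle$. The remaining piece $\int \langle \nabla V(x), S(x-m_\mu)\rangle\,\dd\mu(x)$ equals $\operatorname{tr}\bigl(S \,\EE_\mu[(x-m_\mu)\nabla V(x)^\top]\bigr)$, and since $\mu$ is Gaussian, Stein's lemma gives $\EE_\mu[(x-m_\mu)\nabla V(x)^\top] = \Sigma_\mu \,\EE_\mu \nabla^2 V$; the integrability needed to apply it is automatic from $\beta$-smoothness of $V$ (linear growth of $\nabla V$, bounded $\nabla^2 V$ against the Gaussian $\mu$). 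A direct expansion of $\langle g_\mu, h\rangle_\mu$, using $\int(x-m_\mu)\,\dd\mu = 0$ and $\int (x-m_\mu)(x-m_\mu)^\top\,\dd\mu = \Sigma_\mu$, produces the same expression (the two resulting trace terms match by the cyclic property of trace and symmetry of $\EE_\mu \nabla^2 V$). Hence the two linear functionals agree on $\Tan_{\mu} \PG(\RR^d)$.

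To conclude, I would verify that $g_\mu \in \Tan_{\mu} \PG(\RR^d)$: it is affine, with linear part $\EE_\mu \nabla^2 V$ symmetric because $\nabla^2 V(x)$ is symmetric for every $x$. Applying the Taylor bound with $h$ replaced by $th$ and letting $t \to 0$ gives $\mV((\id+th)_\#\mu) = \mV(\mu) + t\,\langle g_\mu, h\rangle_\mu + O(t^2)$, which matches~\eqref{eq:BWdef} and, together with uniqueness of the BW gradient in $\Tan_{\mu} \PG(\RR^d)$, yields $\nabla_{\mathsf{BW}}\mV(\mu) = g_\mu$. The only genuinely subtle step is the Stein's-lemma manipulation identifying the linear term as a tangent inner product; the remainder is a routine pointwise-Taylor-then-integrate argument.
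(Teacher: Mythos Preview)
Your proposal is correct and follows essentially the same route as the paper: both apply the pointwise Taylor inequality for $V$ under the integral and then identify the linear term $\int \langle \nabla V, h\rangle\,\dd\mu$ with $\langle \nabla_{\mathsf{BW}}\mV(\mu), h\rangle_\mu$ via Gaussian integration by parts (what you call Stein's lemma, what the paper packages as the computation of the BW gradient from the first variation in Lemma~\ref{lemma:bw-grad-F}). The only cosmetic difference is that the paper first derives the formula~\eqref{eq:BWgrad} abstractly as the projection of the Wasserstein gradient $\nabla V$ onto $\Tan_\mu\PG(\RR^d)$ and then invokes $\langle \nabla V, h\rangle_\mu = \langle \nabla_{\mathsf{BW}}\mV(\mu), h\rangle_\mu$ for tangent $h$, whereas you verify this equality by direct computation on both sides.
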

\begin{proof}
The proof of Equation~\eqref{eq:BWgrad} can be found in~\citet[Section C.1]{lambert2022variational} (see also~\Cref{sec:BWgradpot}), and the proof of Inequality~\eqref{eq:Taylor2} can be found in \Cref{sec:proof-lem-pot}.
\end{proof}
Inequality~\eqref{eq:Taylor2} is stronger than differentiability and can be interpreted as the potential $\mV$ being $\beta$-smooth over the BW space (note the analogy with Inequality~\eqref{eq:smooth}). Inequality~\eqref{eq:Taylor2} is a consequence of the smoothness of $V$. As in optimization over $\RR^d$, when dealing with a convex but potentially non-smooth functional, the user may prefer to handle it through its proximal operator.

\subsubsection{A ``convex'' functional}

We say that $\mF$ is geodesically convex if for all $\mu_0,\mu_1 \in \PG(\RR^d)$, 
\begin{equation}
\label{eq:BWcvx}
\mF(\mu_0) + \langle \nabla_{\BW} \mF(\mu_0), T - \id \rangle_{\mu_0} \leq \mF(\mu_1)\,,
\end{equation}
where $T$ is the optimal transport map from $\mu_0$ to $\mu_1$.
In this case, we can introduce an analog of the proximal operator of $\mF$ over the BW space, called the Bures--Wasserstein JKO operator of $\mF$~\citep{jordan1998variational}\footnote{In~\citet{jordan1998variational} the authors define the JKO operator as an analog of the proximal operator over the Wasserstein space. Inspired by their definition, we define the JKO operator over the BW space, and we call it the BW JKO operator.}, and defined by (note the analogy with~\eqref{eq:prox})
\begin{equation}
\label{eq:jko}
\JKO_{\mF}(\mu) \defeq \argmin_{\nu \in \PG(\RR^d)}{\Bigl\{\mF(\nu) + \frac{1}{2}\,W_2^2(\mu,\nu)\Bigr\}}\,.
\end{equation}
We want to emphasize that, in our definition~\eqref{eq:jko}, the BW JKO operator is defined over the BW space.

The entropy $\mH$ is a useful example of a geodesically convex functional over the BW space. More precisely, the entropy is defined by
\begin{equation}\label{eq:entropy}
\mH(\mu) = \int \log \mu(x)\, \dd \mu(x)\,,
\end{equation}
for every $\mu \in \PG(\RR^d)$, where we identify $\mu$ with its density w.r.t.\ Lebesgue measure. 

The next lemma states that the entropy is geodesically convex and gives a formula for its BW JKO operator. The formula for the BW JKO operator can be obtained by a straightforward adaptation of~\citet[Example 7]{sampling-as-opt}. Besides, the geodesic convexity of $\mH$ is well-established in the literature on the Wasserstein space~\citep[Remark 9.3.10]{ambrosio2008gradient}; we adapt this to the BW space.

\begin{lemma}[BW JKO of the entropy]
\label{lem:cvx}
    Consider the entropy functional $\mH$ defined in~\eqref{eq:entropy}. Then, $\mH$ is geodesically convex and the following stronger inequality holds: for all $\nu, \mu_0, \mu_1 \in \PG(\RR^d)$, 
\begin{equation}
\label{eq:BWcvx2}
\mH(\mu_0) + \langle \nabla_{\BW} \mH(\mu_0) \circ T_0, T_1 - T_0 \rangle_{\nu} \leq \mH(\mu_1)\,,
\end{equation}
where $T_0$ (resp.\ $T_1$) is the optimal transport map from $\nu$ to $\mu_0$ (resp.\ $\mu_1$). 

Moreover, the BW JKO operator of $\mH$ is known in closed form: $\JKO_{\eta\mH}(\mu)$ (where $\eta > 0$) is a Gaussian distribution with same mean as $\mu$ and with covariance matrix $\Sigma_1$ where
    \begin{equation}
    \label{eq:BWjko}
    \Sigma_1 = \half \,\bigl( \Sigma + 2 \eta I + \left[ 
        \Sigma\,(\Sigma + 4\eta I)\right]^{1/2} \bigr)\,,
    \end{equation}
    where $\Sigma$ is the covariance matrix of $\mu$.
\end{lemma}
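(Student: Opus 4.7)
Both parts reduce to explicit Gaussian computations, so the overall plan is to use the parametrization $\nu = \mathcal{N}(m_1, \Sigma_1)$ together with the closed-form Gaussian entropy $\mH(\nu) = -\tfrac{d}{2}\log(2\pi e) - \tfrac{1}{2}\log\det\Sigma_1$ and the Bures--Wasserstein distance $W_2^2(\mu,\nu) = \|m-m_1\|^2 + \tr(\Sigma + \Sigma_1 - 2(\Sigma^{1/2}\Sigma_1 \Sigma^{1/2})^{1/2})$, and then to exploit the BW first-order conditions.

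For Inequality~\eqref{eq:BWcvx2}, I would first rewrite the inner product via the change of variables $y = T_0(x)$, $y \sim \mu_0$, so that $\langle \nabla_{\BW}\mH(\mu_0)\circ T_0, T_1 - T_0\rangle_\nu = \langle \nabla_{\BW}\mH(\mu_0), T_1\circ T_0^{-1} - \id\rangle_{\mu_0}$. Then I would consider the curve $\mu_t = ((1-t) T_0 + t T_1)_{\#}\nu$, which is a generalized geodesic based at $\nu$ in the sense of~\citet[Definition 9.2.2]{ambrosio2008gradient}. Since $T_0,T_1$ are affine with symmetric linear part (from the Gaussian OT structure recalled in Section~\ref{sec:BW}), so is $(1-t)T_0 + tT_1$, and hence $\mu_t \in \PG(\RR^d)$ for every $t \in [0,1]$. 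Setting $\phi(t) \defeq \mH(\mu_t)$, the $0$-convexity of $\mH$ along generalized geodesics~\citep[Proposition 9.3.9]{ambrosio2008gradient} gives $\phi(1) \geq \phi(0) + \phi'(0)$; a chain-rule computation of $\phi'(0)$ recovers exactly~\eqref{eq:BWcvx2}.

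For Formula~\eqref{eq:BWjko}, the JKO objective decouples: $m_1$ only appears in $\|m-m_1\|^2$, which forces $m_1 = m$, while $\Sigma_1$ only enters through the entropy and trace terms. I would then write down the BW first-order optimality condition at the minimizer $\nu^* = \mathcal{N}(m,\Sigma_1)$, namely $\eta\, \nabla_{\BW}\mH(\nu^*) + \id - T_{\nu^*}^\mu = 0$, where $T_{\nu^*}^\mu$ is the optimal transport from $\nu^*$ to $\mu$. Substituting $\nabla_{\BW}\mH(\nu^*)(x) = -\Sigma_1^{-1}(x-m)$ gives the affine form $T_{\nu^*}^\mu(x) = (I - \eta\Sigma_1^{-1})(x - m) + m$. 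Requiring $T_{\nu^*}^\mu{}_{\#}\nu^* = \mu$ then yields $(I - \eta\Sigma_1^{-1})\,\Sigma_1\,(I - \eta\Sigma_1^{-1}) = \Sigma$, i.e., $\Sigma_1 + \eta^2 \Sigma_1^{-1} = \Sigma + 2\eta I$. This identity forces $\Sigma_1$ and $\Sigma$ to commute; multiplying by $\Sigma_1$ produces the matrix quadratic $\Sigma_1^2 - (\Sigma + 2\eta I)\,\Sigma_1 + \eta^2 I = 0$, and solving via the commuting matrix quadratic formula yields~\eqref{eq:BWjko}.

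I expect the main obstacle to be handling the matrix square roots cleanly: verifying that $[\Sigma(\Sigma + 4\eta I)]^{1/2}$ is well-defined and commutes with $\Sigma$, ruling out the spurious ``$-$'' branch of the quadratic on positive-definiteness grounds, and ensuring that the first-order condition identifies a \emph{global} minimizer. The last point follows from the fact that $\tfrac{1}{2}\,W_2^2(\mu,\cdot)$ is $1$-strongly convex along generalized geodesics based at $\mu$ while $\mH$ is $0$-convex along the same curves, so the JKO objective is $1$-strongly geodesically convex and its stationary point is unique.
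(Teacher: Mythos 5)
Your overall strategy is sound, and the two halves relate to the paper's proof differently. For Inequality~\eqref{eq:BWcvx2} you follow essentially the paper's route: differentiate $\mH$ along the generalized geodesic $\mu_t = ((1-t)T_0 + tT_1)_{\#}\nu$ and use convexity along such curves. The only difference is that you import the convexity from \citet[\S 9.2--9.4]{ambrosio2008gradient}, whereas the paper makes the argument self-contained by computing $\partial_t^2 \mH(\mu_t) = \langle [\nabla T_t]^{-2},(\nabla T_1-\nabla T_0)^2\rangle \ge 0$ directly (this explicit second-derivative formula is then reused to get strong convexity and smoothness statements for $\mH$ that the later proofs need, which is something your citation-based shortcut does not provide). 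For Formula~\eqref{eq:BWjko} you take a genuinely different route: the paper simply observes that the \emph{Wasserstein} JKO of $\mH$ applied to a Gaussian, computed in \citet[Example 7]{sampling-as-opt}, is itself Gaussian, so it coincides with the BW JKO; you instead re-derive the formula inside the BW space from the stationarity condition $\eta\,\nabla_{\BW}\mH(\nu^*) + \id - T_{\nu^*}^{\mu} = 0$, the pushforward constraint $(I-\eta\Sigma_1^{-1})\,\Sigma_1\,(I-\eta\Sigma_1^{-1})=\Sigma$, and the resulting commuting quadratic $\Sigma_1^2-(\Sigma+2\eta I)\,\Sigma_1+\eta^2 I=0$. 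This is a legitimate, self-contained derivation (and your appeal to $1$-strong convexity of $\half W_2^2(\mu,\cdot)$ plus convexity of $\mH$ along generalized geodesics based at $\mu$ correctly gives existence and uniqueness of the minimizer, so a point satisfying the first-order condition is the global optimum), at the cost of redoing a computation the paper gets for free by citation.

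One concrete point needs fixing: the ``$-$'' branch of the quadratic cannot be ruled out ``on positive-definiteness grounds.'' Eigenvalue-wise the two roots satisfy $s_+ s_- = \eta^2$ and $s_+ + s_- = \lambda + 2\eta$, so \emph{both} roots are strictly positive and both branches yield $\Sigma_1 \succ 0$. What actually discriminates them is Brenier's theorem: for the affine map $x\mapsto (I-\eta\Sigma_1^{-1})(x-m)+m$ to be the \emph{optimal} transport map $T_{\nu^*}^{\mu}$ (as required by your first-order condition), its linear part must be positive semidefinite, i.e.\ $\Sigma_1 \succeq \eta I$. One checks $s_+ \ge \half(\lambda+2\eta) \ge \eta$ while $s_- < \eta$, so only the ``$+$'' branch satisfies the stationarity condition as stated; for the ``$-$'' branch the affine map pushes $\nu^*$ to $\mu$ but is not the OT map, so no contradiction with uniqueness arises until you impose this PSD requirement. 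With that correction, your argument closes.
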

\begin{proof}
The proof of Equation~\eqref{eq:BWjko} can be found in \citet[Example 7]{sampling-as-opt}, and the proof of Inequality~\eqref{eq:BWcvx2} can be found in \Cref{sec:proof-lem-ent}.
\end{proof}

Inequality~\eqref{eq:BWcvx2} is stronger than geodesic convexity and is a consequence of the generalized geodesic convexity of the entropy~\citep[Remark 9.3.10]{ambrosio2008gradient}. \textit{Finally,~\eqref{eq:BWjko} which gives the BW JKO of the entropy in closed form is remarkable, and is at the core of our approach.} As a comparison, \citet{WPGA} proposed an algorithm relying on the JKO of the entropy over the whole Wasserstein space, but the latter JKO is not implementable.

\section{(Stochastic) Forward-backward Gaussian variational inference}
\label{sec:sfbgvi}

\subsection{Revisiting Gaussian VI}
\label{sec:pb}

We now restate Problem~\eqref{eq:original} more formally. We assume that the target distribution $\pi$ admits a positive density w.r.t.\ Lebesgue measure, denoted $\pi$ as well in an abuse of notation. We write $\pi$ in the form $\pi \propto \exp(-V)$. Moreover, we assume that the function $V : \RR^d \to \RR$ is $\beta$-smooth. Recall that the KL divergence is defined for every $\mu \in \PG(\RR^d)$ as
\begin{equation}
\KL{\mu}{\pi} = \int \log\frac{\mu(x)}{\pi(x)}\, \dd\mu(x)\,.
\end{equation}
Recall that our goal is to solve Problem~\eqref{eq:original}. We denote $\mF \defeq \mV + \mH$ as the sum of the potential (associated to the function $V$) and the entropy.
Then, a quick calculation reveals that $\mF(\mu) - \mF(\pi) = \KL{\mu}{\pi}$. Since $\mF(\pi)$ is a constant (\textit{i.e.}, does not depend on $\mu$),
Problem~\eqref{eq:original} is equivalent to
\begin{equation}
\label{eq:original2}
\min_{\mu \in \PG(\RR^d)}{\{\mV(\mu) + \mH(\mu)\}}\,. 
\end{equation}

\subsection{Proposed algorithm}

Recall that the potential $\mV$ is ``smooth'' over the BW space and that the BW gradient of $\mV$ admits a closed form (Lemma~\ref{lem:smooth}). Recall also that the entropy $\mH$ is ``convex'' over the BW space and that the BW JKO of $\mH$ admits a closed form (Lemma~\ref{lem:cvx}). To solve the equivalent problem~\eqref{eq:original2}, a natural idea is to adapt the forward-backward algorithm to the BW space. This leads to the following Forward-Backward Gaussian Variational Inference (FB--GVI) algorithm (note the analogy with~\eqref{eq:fb}):
\begin{align}
\iterate_{k + \half} &= (\id - \eta\, \nabla_{\mathsf{BW}}\mV(\iterate_k))_{\#} \iterate_k\,, \label{eq:dfb1}\\
\iterate_{k + 1} &= \jko_{\eta \mH}(\iterate_{k + \half})\,. \label{eq:dfb2}
\end{align}
The backward step~\eqref{eq:dfb2} is tractable using~\eqref{eq:BWjko}. Although the forward step~\eqref{eq:dfb1} also admits a closed form, the forward step involves computing integrals of $\nabla V$ and $\nabla^2 V$ w.r.t.\ $\iterate_k$, see~\eqref{eq:BWgrad}. These integrals can be intractable. Therefore, we propose to build a stochastic estimate $\hat{g}_k$ of $\nabla_{\BW} \mV(\iterate_k)$, \textit{i.e.}, a stochastic gradient, by drawing a random sample from $\iterate_k$. The resulting algorithm is called Stochastic FB--GVI, and can be written as
\begin{align}
\label{eq:bwsfb}
\iterate_{k + \half} = (\id - \eta\, \hat{g}_k)_{\#} \iterate_k\,, \nonumber\\
\iterate_{k + 1} = \jko_{\eta \mH}(\iterate_{k + \half})\,,
\end{align}
where $\hat{g}_k$ is the random affine function defined by
\begin{equation}
\label{eq:sto-grad}
\hat{g}_k: x \mapsto 
\nabla V(\hat{X}_k) + \nabla^2 V(\hat{X}_k)\,(x - m_k)\,,  
\end{equation}
where $\hat{X}_k$ is sampled from $\iterate_k$ (\textit{i.e.}, $X_k \sim \iterate_k$) and 
$m_{k} = \int x \, d\iterate_k(x)$ is the mean of $\iterate_k$. 

Stochastic FB--GVI is an analog, over the BW space, of the stochastic forward-backward algorithm (note the analogy with~\eqref{eq:sfb}). In particular, $(\iterate_k)_{k\in\NN}$ defined by~\eqref{eq:bwsfb} is a sequence of random Gaussian distributions, \textit{i.e.}, random variables with values in $\PG(\RR^d)$. We denote the mean (resp.\ covariance matrix) of $\iterate_k$ by $m_k$ (resp.\ $\Sigma_k$). FB--GVI and Stochastic FB--GVI can be implemented in terms of the means and the covariance matrices of the iterates $\iterate_k$. The iterations of FB--GVI and Stochastic FB--GVI in terms of $m_k$ and $\Sigma_k$ are given in Algorithm~\ref{alg:FBGVI}. Efficient algorithms developed for computing the matrix square-root (see, for example,~\citet{pleiss2020fast,song2022fast}) can be leveraged to improve the per-iteration complexity.  

\begin{algorithm}[tb]
    \caption{FB--GVI and Stochastic FB--GVI}
    \label{alg:FBGVI}
    \begin{algorithmic}
        \REQUIRE
            Step size $\eta > 0$;
            Iteration count $N$;
            Initial distribution $\iterate_0 = \mN(m_0,\Sigma_0)$
        \FOR{$k=0$ {\bfseries to} $N-1$}
        \IF {FB--GVI}
        \STATE $b_k \leftarrow \EE_{\iterate_k} \nabla V$, $S_k \leftarrow \EE_{\iterate_k} \nabla^2 V$
        \ELSIF{Stochastic FB--GVI}
        \STATE draw $\hat{X}_k \sim \mN(m_k,\Sigma_k)$
        \STATE $b_k \leftarrow \nabla V(\hat{X}_k)$, $S_k \leftarrow \nabla^2 V(\hat{X}_k)$
        \ENDIF
        \STATE $m_{k+1} \leftarrow m_k - \eta \, b_k$
        \STATE $M_{k+1} \leftarrow I- \eta\, S_k$
        \STATE  $\Sigma_{k + \half} \leftarrow M_{k+1} \Sigma_k M_{k+1}$
        \STATE $\Sigma_{k + 1} \leftarrow \half (\Sigma_{k + \half} + 2 \eta I + [ 
        \Sigma_{k + \half}(\Sigma_{k + \half} + 4\eta I)]^{1/2})$
        \ENDFOR
        \OUTPUT $p_N = \mN(m_N, \Sigma_N)$
    \end{algorithmic}
\end{algorithm}

\section{Convergence theory}\label{sec:convergence}

In this section, we study the convergence of FB--GVI and Stochastic FB--GVI using their equivalent forms~\eqref{eq:dfb1}--\eqref{eq:dfb2} and~\eqref{eq:bwsfb}. We will make use of standard complexity notations, such as $\gtrsim, \asymp$. We also denote by $\hat{\pi} = \mN(\hat{m},\hat{\Sigma})$ a solution of Problem~\eqref{eq:original} (\textit{i.e.}, a minimizer of the KL objective), and we let $\SF_k$ denote the $\sigma$-algebra generated up to iteration $k$ (but not including the random sample $\hat{X}_k \sim p_k$ in Stochastic FB--GVI).

We consider several assumptions on $V$. Given $\alpha \in \RR$, $V$ is $\alpha$-convex if $\alpha I \preceq \nabla^2 V$. If $\alpha = 0$, $V$ is said to be convex, and if $\alpha > 0$, $V$ is said to be ($\alpha$-)strongly convex. For either algorithm, define the (random) error function (see the definitions of $b_k$ and $S_k$ in \Cref{alg:FBGVI}) as
\begin{align*}
    e_k \colon x \mapsto (S_k - \EE_{\iterate_k} \nabla^2 V) (x - m_k) + (b_k - \EE_{\iterate_k} \nabla V)\,,
\end{align*}
and denote its expected $L^2(\iterate_k)$ norm by $\sigma_k^2 \defeq \EE[\norm{e_k}^2_{\iterate_k}\mid \SF_k]$. The expectation is taken over the possible randomness of $(b_k, S_k)$ (i.e., over the randomness of $\hat{X}_k$). For Stochastic FB--GVI, $e_k = \hat{g}_k - \nabla_{\BW}\mV(\iterate_k)$, where $\hat{g}_k$ is defined by~\eqref{eq:sto-grad}. Since $\EE [e_k\mid \SF_k] = 0$ (\textit{i.e.}, the BW stochastic gradient is unbiased),  $\sigma_k^2$ is the conditional variance of the BW stochastic gradient at iteration $k$. For FB--GVI, $e_k = \nabla_{\BW}\mV(\iterate_k) - \nabla_{\BW}\mV(\iterate_k) = 0$, hence $\sigma_k = 0$.  
Our analysis of FB--GVI and Stochastic FB--GVI relies on the following unified one-step-inequality for the iterates $(\iterate_k)_{k\in \NN}$ of both~\eqref{eq:dfb1}--\eqref{eq:dfb2} and~\eqref{eq:bwsfb}.

\begin{lemma}[\Acl{OSI}]
    \label{lemma:discrete-evi}
    Suppose that $V$ is $\alpha$-convex and $\beta$-smooth.
    Let $(\iterate_k)_{k \in \NN}$ be the iterates of FB--GVI~\eqref{eq:dfb1}--\eqref{eq:dfb2} or Stochastic FB--GVI~\eqref{eq:bwsfb}. Let $\stepsize > 0$ be such that
    \[
    \stepsize \leq \begin{cases}
        \frac{1}{\beta} &\text{if}~\sigma_k = 0~~\text{(FB--GVI)}\,,\\
        \frac{1}{2\beta} &\text{else}\,.
    \end{cases}
    \]
    Then, for all $\nu \in \mathsf{BW}(\RR^d)$,
    \begin{align}
    \label{eq:OSI-main}
        \EE
        W_2^2(\iterate_{k+1}, \nu) 
        &\leq 
        (1 - \alpha\stepsize)\, \EE W_2^2(\iterate_k, \nu)     - 2\stepsize\,\EE [\mF(\iterate_{k+1}) - \mF(\nu)]
        + 2\stepsize^2\,\EE\sigma_k^2\,. 
    \end{align}
\end{lemma}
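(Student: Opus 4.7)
The plan is to mirror the textbook one-step analysis of the (stochastic) proximal gradient method in $\RR^d$, translating Euclidean inner products and squared distances into $L^2(\iterate_k)$ pairings of affine maps and squared $W_2$-distances via Lemmas~\ref{lem:smooth} and~\ref{lem:cvx}. Three ingredients drive the proof.

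First, a three-point inequality for the JKO step: since $\iterate_{k+1}$ minimizes $\mu \mapsto \mH(\mu) + \tfrac{1}{2\eta} W_2^2(\iterate_{k+1/2}, \mu)$, its first-order optimality condition reads $\nabla_{\BW}\mH(\iterate_{k+1}) = \tfrac{1}{\eta}(T_{\iterate_{k+1}\to\iterate_{k+1/2}} - \id)$. Substituting this into Lemma~\ref{lem:cvx} with base measure $\iterate_{k+1/2}$ (and $\mu_0 = \iterate_{k+1}$, $\mu_1 = \nu$), and using that OT maps between non-degenerate Gaussians are mutually inverse so that $T_{\iterate_{k+1}\to\iterate_{k+1/2}} \circ T_{\iterate_{k+1/2}\to\iterate_{k+1}} = \id$, the polarization identity yields
\[
2\eta\,[\mH(\iterate_{k+1}) - \mH(\nu)] + W_2^2(\iterate_{k+1}, \iterate_{k+1/2}) + W_2^2(\iterate_{k+1}, \nu) \leq W_2^2(\iterate_{k+1/2}, \nu).
\]
Second, the forward-step identity $\iterate_{k+1/2} = (\id - \eta g_k)_\# \iterate_k$, combined with the (suboptimal) coupling $(T_{\iterate_k\to\nu}, \id - \eta g_k)_\# \iterate_k$ of $\nu$ and $\iterate_{k+1/2}$, yields $W_2^2(\iterate_{k+1/2}, \nu) \leq W_2^2(\iterate_k, \nu) + 2\eta\langle g_k, T_{\iterate_k\to\nu} - \id\rangle_{\iterate_k} + \eta^2\|g_k\|_{\iterate_k}^2$. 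Third, Lemma~\ref{lem:smooth} with $h = T_{\iterate_k\to\iterate_{k+1}} - \id$ combined with the $\alpha$-convexity of $\mV$ (inherited pointwise from $V$) gives
\[
\mV(\iterate_{k+1}) - \mV(\nu) \leq \langle \nabla_{\BW}\mV(\iterate_k), T_{\iterate_k\to\iterate_{k+1}} - T_{\iterate_k\to\nu}\rangle_{\iterate_k} + \tfrac{\beta}{2}W_2^2(\iterate_k, \iterate_{k+1}) - \tfrac{\alpha}{2}W_2^2(\iterate_k, \nu).
\]

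I would then sum these three ingredients (suitably scaled by $2\eta$), substitute $g_k = \nabla_{\BW}\mV(\iterate_k) + e_k$ to expose the noise, and use $g_k = (\id - T_{\iterate_k\to\iterate_{k+1/2}})/\eta$ to collapse the $\nabla_{\BW}\mV(\iterate_k)$ cross-terms via the polarization identity $2\langle a-b, c-a\rangle = \|c-b\|^2 - \|a-b\|^2 - \|c-a\|^2$. After rearrangement one arrives at the OSI plus a linear-in-$e_k$ noise piece $-2\eta\langle e_k, T_{\iterate_k\to\iterate_{k+1}} - T_{\iterate_k\to\nu}\rangle_{\iterate_k}$ and residual $W_2^2$-terms. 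Taking conditional expectation given $\SF_k$, the $\SF_k$-measurable-coefficient component of the noise vanishes (since $T_{\iterate_k\to\nu}$ is $\SF_k$-measurable and $\EE[e_k\mid\SF_k]=0$), while the $e_k$-dependent component (through $T_{\iterate_k\to\iterate_{k+1}}$) is bounded by Young's inequality and produces the $2\eta^2\EE\sigma_k^2$ term.

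The main obstacle is that BW smoothness naturally produces $W_2^2(\iterate_k, \iterate_{k+1})$ while the JKO three-point naturally produces the distinct $W_2^2(\iterate_{k+1/2}, \iterate_{k+1})$: unlike Euclidean space, these are different quantities in the BW geometry (roughly because the composition of BW optimal transport maps is no longer optimal transport), so cancellation is not immediate. Bridging them via the triangle inequality $W_2^2(\iterate_k, \iterate_{k+1}) \leq 2W_2^2(\iterate_k, \iterate_{k+1/2}) + 2W_2^2(\iterate_{k+1/2}, \iterate_{k+1}) = 2\eta^2\|g_k\|_{\iterate_k}^2 + 2W_2^2(\iterate_{k+1/2}, \iterate_{k+1})$ is what forces the step-size condition $\eta \leq 1/(2\beta)$ in the stochastic case (the milder $\eta \leq 1/\beta$ suffices when $\sigma_k = 0$, since the $\|g_k\|^2$ surplus is then fully absorbed by the net-negative smoothness remainder).
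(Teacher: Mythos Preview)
Your plan has a genuine gap at the polarization step. After you apply Lemma~\ref{lem:smooth} with the optimal-transport direction $h = T_{\iterate_k\to\iterate_{k+1}} - \id$ and then polarize $2\eta\,\langle g_k, T_{\iterate_k\to\iterate_{k+1}} - \id\rangle_{\iterate_k}$ using $\eta g_k = \id - T_{\iterate_k\to\iterate_{k+1/2}}$, the residual contains the term
\[
\|T_{\iterate_k\to\iterate_{k+1}} - T_{\iterate_k\to\iterate_{k+1/2}}\|_{\iterate_k}^2\,.
\]
This is the squared cost of the (generally suboptimal) coupling $(T_{\iterate_k\to\iterate_{k+1}},T_{\iterate_k\to\iterate_{k+1/2}})_\#\iterate_k$ of $(\iterate_{k+1},\iterate_{k+1/2})$, so it only satisfies $\|T_{\iterate_k\to\iterate_{k+1}} - T_{\iterate_k\to\iterate_{k+1/2}}\|_{\iterate_k}^2 \geq W_2^2(\iterate_{k+1/2},\iterate_{k+1})$. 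It therefore does \emph{not} cancel against the $-W_2^2(\iterate_{k+1/2},\iterate_{k+1})$ coming from your JKO three-point inequality, and the leftover is a nonnegative term with no control. Your triangle-inequality fix targets the smoothness remainder $\eta\beta\,W_2^2(\iterate_k,\iterate_{k+1})$, which is a separate (and minor) issue, and does nothing to bound this residual; so the obstacle you flagged is real, but it is not the one you describe, and your proposed bridge does not span it.

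The paper avoids the problem by never introducing $T_{\iterate_k\to\iterate_{k+1}}$. It works instead with the \emph{composed} affine map $X_k\mapsto X_{k+1/2}\mapsto X_{k+1}$, i.e.\ it applies Lemma~\ref{lemma:gg-ineqs-V} with the affine (but generally non-OT, non-tangent) perturbation $h$ satisfying $\id + h = T_{\iterate_{k+1/2}\to\iterate_{k+1}}\circ T_{\iterate_k\to\iterate_{k+1/2}}$; that lemma is valid for \emph{any} affine $h$, not only elements of $\Tan_{\iterate_k}\PG(\RR^d)$. With this choice all cross-terms become pointwise Euclidean identities in the random variables $X_k, X_{k+1/2}, X_{k+1}, Y_\mV$ and cancel \emph{exactly}, because $(X_{k+1/2},X_{k+1})$ is already an optimal coupling so $\EE\|X_{k+1}-X_{k+1/2}\|^2 = W_2^2(\iterate_{k+1/2},\iterate_{k+1})$. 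The $1/\beta$ versus $1/(2\beta)$ step-size dichotomy then arises solely from the Young-inequality treatment of the noise term $-\EE\langle e_k(X_k), X_{k+1}-Y_\mV\rangle = -\EE\langle e_k(X_k), X_{k+1}-X_k\rangle$, which costs an extra $\tfrac{1}{4\eta}\EE\|X_{k+1}-X_k\|^2$, not from any triangle-inequality bridging.
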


\begin{proof}
    The proof is given in \Cref{section:discrete-evi-proof}.
\end{proof}
This one-step inequality is similar, by replacing the Wasserstein distance by the Euclidean distance, to the one-step inequality classically used in the analysis of the stochastic forward-backward algorithm~\citep{gorbunov2020unified}. The proof of Lemma~\ref{lemma:discrete-evi} heavily employs the differential and geometric structure of the BW space presented in Section~\ref{sec:BW}.

\subsection{Convergence of FB--GVI}

In this section, $(\iterate_k)_{k\in\NN}$ is the sequence of iterates defined by FB--GVI (\eqref{eq:dfb1}--\eqref{eq:dfb2}). We obtain corollaries of Lemma~\ref{lemma:discrete-evi} by setting $\sigma_k = 0$ in~\eqref{eq:OSI-main}, when $V$ is convex or strongly convex.

\begin{theorem}[Convex case, FB--GVI]
    \label{thm:noiseless-wc-rate}
    Suppose that $V$ is convex and $\beta$-smooth and that $0 < \eta \le \frac{1}{\beta}$. Then,
    \begin{align*}
        \mF(p_{N}) - \mF(\hat{\pi})
        &\leq
        \frac{W_2^2(p_0, \hat{\pi})}{2N\eta}\,.
    \end{align*}
    In particular, when $\eta = \frac{1}{\beta}$ and $N \gtrsim \frac{\beta W_2^2(p_0, \hat{\pi})}{\varepsilon^2}$,
    we obtain the guarantee
    \begin{align*}
        \mF(p_{N}) - \mF(\hat{\pi}) 
        &\leq 
        \varepsilon^2\,.
    \end{align*}
\end{theorem}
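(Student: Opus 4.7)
The plan is to use \Cref{lemma:discrete-evi} with $\sigma_k=0$ (since this is the deterministic FB--GVI regime) and $\alpha=0$ (convex case), and then combine a telescoping argument with a monotonicity observation to convert what is really an average-iterate bound into a last-iterate bound. Concretely, under the hypothesis $\eta \le 1/\beta$, the \ac{OSI} specializes to
\begin{equation*}
W_2^2(\iterate_{k+1},\nu) \;\le\; W_2^2(\iterate_k,\nu) \;-\; 2\eta\,[\mF(\iterate_{k+1})-\mF(\nu)]
\end{equation*}
for every $\nu \in \PG(\RR^d)$.

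First I would take $\nu = \hat\pi$ and rearrange to obtain a per-step descent inequality,
\begin{equation*}
2\eta\,[\mF(\iterate_{k+1})-\mF(\hat\pi)] \;\le\; W_2^2(\iterate_k,\hat\pi) - W_2^2(\iterate_{k+1},\hat\pi)\,,
\end{equation*}
and then telescope this over $k=0,1,\dots,N-1$. Since $W_2^2(\iterate_N,\hat\pi) \ge 0$, the telescoped sum is bounded by $W_2^2(\iterate_0,\hat\pi)$, giving
\begin{equation*}
\sum_{k=0}^{N-1} [\mF(\iterate_{k+1})-\mF(\hat\pi)] \;\le\; \frac{W_2^2(\iterate_0,\hat\pi)}{2\eta}\,.
\end{equation*}
This is a sum over all iterates, and by itself only yields a bound on the best or average iterate.

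The second step, and the only mildly nonobvious one, is to upgrade this to a last-iterate bound by showing that $k \mapsto \mF(\iterate_k)$ is nonincreasing. For this I would apply the same \ac{OSI} with $\nu = \iterate_k$, yielding
\begin{equation*}
W_2^2(\iterate_{k+1},\iterate_k) \;\le\; -2\eta\,[\mF(\iterate_{k+1})-\mF(\iterate_k)]\,,
\end{equation*}
so $\mF(\iterate_{k+1}) \le \mF(\iterate_k) - \tfrac{1}{2\eta}W_2^2(\iterate_{k+1},\iterate_k) \le \mF(\iterate_k)$. Consequently $\mF(\iterate_N) \le \mF(\iterate_{k+1})$ for every $k \in \{0,\dots,N-1\}$, and therefore
\begin{equation*}
N\,[\mF(\iterate_N)-\mF(\hat\pi)] \;\le\; \sum_{k=0}^{N-1}[\mF(\iterate_{k+1})-\mF(\hat\pi)] \;\le\; \frac{W_2^2(\iterate_0,\hat\pi)}{2\eta}\,,
\end{equation*}
which is the claimed bound. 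The complexity statement is then immediate: setting $\eta = 1/\beta$, the right-hand side becomes $\beta W_2^2(\iterate_0,\hat\pi)/(2N)$, and choosing $N \gtrsim \beta W_2^2(\iterate_0,\hat\pi)/\varepsilon^2$ makes it at most $\varepsilon^2$.

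The whole argument is essentially a translation of the standard proximal gradient analysis to the BW setting, so there is no substantive obstacle once the \ac{OSI} of \Cref{lemma:discrete-evi} is available; the only point worth flagging is that the last-iterate guarantee (rather than an averaged or best-iterate guarantee) is obtained for free here because the \ac{OSI}, when applied with $\nu=\iterate_k$, automatically yields monotone descent in $\mF$.
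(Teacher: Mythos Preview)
Your proposal is correct and matches the paper's proof essentially line for line: the paper also applies \Cref{lemma:discrete-evi} with $\nu=\hat\pi$ to get the telescoping inequality, applies it separately with $\nu=\iterate_k$ to obtain the monotonicity $\mF(\iterate_{k+1})\le\mF(\iterate_k)$, and combines the two to pass from an averaged bound to the last-iterate bound.
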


\begin{proof}
    The proof is given in \Cref{sec:noiseless-wc-rate-proof}.
\end{proof}
Next, we establish the convergence of $(p_k)$ to a minimizer of Problem~\eqref{eq:original}.
\begin{theorem}[Asymptotic convergence]
    \label{thm:noiseless-wc-no-rate}
    Suppose that $V$ is convex and $\beta$-smooth and that $0 < \eta < \frac{1}{\beta}$. Then, there exists a minimizer $\pi_\star$ of Problem~\eqref{eq:original} such that $W_2(\iterate_k,\pi_\star) \to 0$ as $k\to\infty$.
\end{theorem}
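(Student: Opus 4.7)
My plan is to combine the one-step inequality of Lemma~\ref{lemma:discrete-evi}, which gives Fej\'er-monotonicity with respect to any minimizer, with a compactness argument to extract a subsequential limit in $\PG(\RR^d)$, and then to upgrade this to convergence of the full sequence via a second application of Fej\'er monotonicity.

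First, I would specialize Lemma~\ref{lemma:discrete-evi} to $\sigma_k = 0$, $\alpha = 0$, and $\nu = \hat\pi$, obtaining
\begin{equation*}
W_2^2(\iterate_{k+1}, \hat\pi) \le W_2^2(\iterate_k, \hat\pi) - 2\eta\,[\mF(\iterate_{k+1}) - \mF(\hat\pi)]\,.
\end{equation*}
Since $\mF(\iterate_{k+1}) \ge \mF(\hat\pi)$, the sequence $W_2^2(\iterate_k, \hat\pi)$ is non-increasing, and telescoping yields $\sum_{k} [\mF(\iterate_{k+1}) - \mF(\hat\pi)] \le \frac{1}{2\eta} W_2^2(\iterate_0,\hat\pi) < \infty$, so $\mF(\iterate_k) \to \mF(\hat\pi)$.

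Next, I would show that $(\iterate_k)_{k\ge 1}$ stays in a compact subset of $\PG(\RR^d)$. The boundedness of $W_2(\iterate_k, \hat\pi)$ obtained above gives uniform upper bounds on $\|m_k\|$ and $\operatorname{tr}(\Sigma_k)$, while the closed-form update~\eqref{eq:BWjko} forces $\Sigma_{k+1} \succeq \eta I$ for every $k \ge 0$, because both $\Sigma_{k+1/2}$ and $[\Sigma_{k+1/2}(\Sigma_{k+1/2}+4\eta I)]^{1/2}$ are positive semidefinite and only the middle summand $2\eta I$ is needed. Hence $(m_k, \Sigma_k)_{k \ge 1}$ lies in a compact subset of $\RR^d \times \mathbf{S}^d_{++}$ on which $W_2$ and $\mF = \mV + \mH$ are both continuous in $(m, \Sigma)$ (for $\mH$ this uses $\Sigma\succeq \eta I$, and for $\mV$ it uses the quadratic growth of $V$ implied by $\beta$-smoothness). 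Bolzano-Weierstrass extracts a subsequence $\iterate_{k_n} \to \pi_\star \defeq \mN(m_\infty,\Sigma_\infty) \in \PG(\RR^d)$ in $W_2$, and continuity gives $\mF(\pi_\star) = \lim_n \mF(\iterate_{k_n}) = \mF(\hat\pi)$, so $\pi_\star$ is itself a minimizer of Problem~\eqref{eq:original}. Finally, re-applying the one-step inequality with $\nu = \pi_\star$, the non-increasing sequence $W_2^2(\iterate_k, \pi_\star)$ has a subsequence converging to $0$, hence the full sequence converges to $0$.

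The main obstacle is ensuring the subsequential limit lies in $\PG(\RR^d)$ rather than on its boundary: a priori one could worry that $\Sigma_k$ degenerates along the trajectory, which would also cause $\mH(\iterate_k)\to-\infty$ and invalidate the continuity step. This is ruled out cleanly by the uniform lower bound $\Sigma_{k+1}\succeq \eta I$, which is a structural advantage of the closed-form BW JKO and crucially uses $\eta > 0$; the remaining steps are then standard Fej\'er-monotonicity bookkeeping.
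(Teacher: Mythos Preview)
Your proposal is correct and takes a genuinely different, more elementary route than the paper's proof. The paper invokes the abstract machinery of~\citet{naldi2021weak}: it introduces the so-called NS-topology on $\mP_2(\RR^d)$, verifies that the FB--GVI map $T$ is non-expansive and asymptotically regular (this last step is where the paper needs the strict inequality $\eta<1/\beta$, to extract the extra term $-(1-\eta\beta)\,W_2^2(T(p_k),p_k)$ from Inequality~\eqref{eq:proof-no-rate}), and then applies a general Opial-type theorem to obtain weak convergence to some $\pi_\star$; finally, weak lower semicontinuity of the KL divergence shows $\pi_\star$ is a minimizer, and Gaussianity upgrades weak to $W_2$ convergence.

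By contrast, you work entirely within the finite-dimensional parametrization $(m_k,\Sigma_k)$. Your key observation is that the closed-form JKO update forces $\Sigma_{k+1}\succeq \eta I$ uniformly, which together with the bounded second moments from Fej\'er monotonicity traps the iterates in a compact subset of $\RR^d\times\mathbf S_{++}^d$. This makes the subsequential extraction and the continuity of $\mF$ routine, and the standard Fej\'er upgrade finishes the argument. Your approach is self-contained, avoids the NS-topology entirely, and in fact does not appear to require the strict inequality $\eta<1/\beta$ (only $\eta\le 1/\beta$ for Lemma~\ref{lemma:discrete-evi} and $\eta>0$ for the eigenvalue lower bound). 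The paper's route, on the other hand, illustrates a template that would generalize beyond the Gaussian family, where a finite-dimensional compactness argument is unavailable.
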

\begin{proof}
    The proof is given in \Cref{sec:noiseless-wc-no-rate-proof}.
\end{proof}
Note that Theorem~\ref{thm:noiseless-wc-no-rate} does not follow from Theorem~\ref{thm:noiseless-wc-rate}. Indeed, Theorem~\ref{thm:noiseless-wc-rate} only implies the weak convergence of ${(p_k)}_{k\ge 0}$ to the \textit{set} of minimizers of Problem~\eqref{eq:original}. The existence of a single minimizer $\pi_\star$ to which the sequence ${(p_k)}_{k\ge 0}$ converges follows from the one-step inequality (Lemma~\ref{lemma:discrete-evi}) applied with $\eta < \frac{1}{\beta}$ and from recent results on the topology of the Wasserstein space~\citep{naldi2021weak}.

\begin{theorem}[Strongly convex case, FB--GVI]
    \label{thm:noiseless-sc-rate}
    Suppose that $V$ is $\alpha$-strongly convex and $\beta$-smooth, and that $0 < \eta \leq \frac{1}{\beta}$. Then,
    \begin{align*}
        W_2^2(p_{N}, \hat{\pi})
        &\leq
        \exp\left( -\alpha N \eta \right)\,
        W_2^2(p_0, \hat{\pi})\,.
    \end{align*}
    In particular, when $\eta = \frac{1}{\beta}$ and $N\gtrsim \frac{\beta}{\alpha} \log \frac{W_2(p_0, \hat{\pi})}{\varepsilon}$,
    we obtain the guarantees
    \begin{align*}
        \alpha\, W_2^2(\mu_{N}, \hat{\pi}) 
        \leq 
        \varepsilon^2\,,
        \quad\text{and}\quad
        \mF(p_{2N}) - \mF(\hat{\pi}) 
        \leq 
        \varepsilon^2\,.
    \end{align*}
\end{theorem}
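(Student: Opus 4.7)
The plan is to derive the contraction directly from the one-step inequality of \Cref{lemma:discrete-evi}, then combine it with \Cref{thm:noiseless-wc-rate} to obtain the bound on $\mF(p_{2N}) - \mF(\hat{\pi})$.

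First, I would apply \Cref{lemma:discrete-evi} with $\nu = \hat{\pi}$ and $\sigma_k = 0$ (since we are running the noiseless FB--GVI). The hypotheses are met because $V$ is $\alpha$-convex ($\alpha$-strong convexity implies $\alpha$-convexity) and $\beta$-smooth, and we take $\eta \le 1/\beta$. This yields
\begin{align*}
W_2^2(p_{k+1}, \hat{\pi})
&\le (1 - \alpha\eta)\, W_2^2(p_k, \hat{\pi})
- 2\eta\,\bigl(\mF(p_{k+1}) - \mF(\hat{\pi})\bigr)\,.
\end{align*}
Since $\hat{\pi}$ is a minimizer of $\mF$ over $\PG(\RR^d)$, the last term is non-positive, so it can be dropped to give the contraction
\[
W_2^2(p_{k+1}, \hat{\pi}) \le (1 - \alpha \eta)\, W_2^2(p_k, \hat{\pi})\,.
\]
Iterating this inequality $N$ times and using the elementary bound $1 - \alpha \eta \le \exp(-\alpha \eta)$ (valid since $\alpha \eta \le \alpha/\beta \le 1$) yields the stated contraction $W_2^2(p_N, \hat{\pi}) \le \exp(-\alpha N \eta)\, W_2^2(p_0, \hat{\pi})$.

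For the explicit rate at $\eta = 1/\beta$, substituting gives $W_2^2(p_N, \hat{\pi}) \le \exp(-\alpha N/\beta)\, W_2^2(p_0,\hat{\pi})$, so choosing $N \gtrsim (\beta/\alpha)\,\log(W_2(p_0, \hat{\pi})/\varepsilon)$ (with the implicit constant absorbing the factor $\alpha$ inside the logarithm) immediately gives $\alpha\, W_2^2(p_N, \hat{\pi}) \le \varepsilon^2$.

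To obtain the functional-gap bound $\mF(p_{2N}) - \mF(\hat{\pi}) \le \varepsilon^2$, I would restart the convex-case analysis at iterate $p_N$. Since FB--GVI is a deterministic recursion, the sequence $(p_{N+j})_{j \ge 0}$ is exactly FB--GVI initialized at $p_N$, and the hypotheses of \Cref{thm:noiseless-wc-rate} still hold ($\alpha$-strong convexity implies convexity). Applying that theorem for another $N$ steps gives
\[
\mF(p_{2N}) - \mF(\hat{\pi}) \le \frac{W_2^2(p_N, \hat{\pi})}{2 N \eta} \le \frac{\exp(-\alpha N/\beta)\,W_2^2(p_0,\hat{\pi})}{2N/\beta}\,,
\]
and plugging in $N \gtrsim (\beta/\alpha)\log(W_2(p_0,\hat{\pi})/\varepsilon)$ makes the right-hand side $\le \varepsilon^2$ after absorbing log factors into the $\gtrsim$ constant.

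There is no real obstacle here: once \Cref{lemma:discrete-evi} is in hand, the argument is the standard ``strong convexity $+$ OSI $\Rightarrow$ geometric contraction'' template from Euclidean convex optimization, transplanted to the BW space. The only mild subtlety is the second guarantee, which is not a direct consequence of the contraction (since $\mF$ need not be $\beta$-smooth on the BW space in a way that would convert $W_2^2$ gaps into function-value gaps), but is handled cleanly by chaining with \Cref{thm:noiseless-wc-rate} at the halfway iterate.
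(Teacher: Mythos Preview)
Your proposal is correct and follows essentially the same approach as the paper: apply \Cref{lemma:discrete-evi} with $\nu=\hat\pi$ and $\sigma_k=0$, drop the non-positive functional term using that $\hat\pi$ is a minimizer, iterate the resulting contraction, and then ``reinitialize'' at $p_N$ and invoke \Cref{thm:noiseless-wc-rate} for the objective-gap guarantee. The only cosmetic difference is that the paper first passes to the bound $(1-\alpha\eta)\le e^{-\alpha\eta}$ before iterating, whereas you iterate and then exponentiate, which is immaterial.
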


\begin{proof}
    The proof is given in \Cref{sec:noiseless-sc-rate-proof}.
\end{proof}

Theorem~\ref{thm:noiseless-wc-rate} states the sublinear convergence of FB--GVI for a convex $V$ (in terms of objective gap) and Theorem~\ref{thm:noiseless-sc-rate} states the linear convergence of FB--GVI for a strongly convex $V$. The convergence rates are of the same order as the convergence rates of the deterministic forward-backward algorithm over $\RR^d$~\citep{gorbunov2020unified}.

Finally, we also extend our results to the non-convex case, where we obtain a stationary point guarantee.

\begin{theorem}[Non-convex case, FB--GVI]
    \label{thm:noiseless-smooth-rate}
    Suppose that $V$ is $\beta$-smooth, and that $0 < \eta \leq \frac{1}{\beta}$.
    Let 
    $\Delta \defeq \mF(\iterate_0) - \mF(\hat{\pi})$.
    Then,
    \begin{align*}
        \min_{k \in \left\{ 0, \ldots, N - 1 \right\}} \norm{\nabla_{\BW} \mF(\iterate_k)}_{\iterate_k}^2 \leq \frac{150\Delta}{\eta N}\, .
    \end{align*}
    In particular, when $\eta = \frac{1}{\beta}$ and $N\gtrsim \frac{\beta\Delta}{\varepsilon^2}$,
    we obtain the guarantee
    \begin{align}
    \label{eq:foopt}
        \min_{k \in \left\{ 0, \ldots, N - 1 \right\}} \norm{\nabla_{\BW} \mF(\iterate_k)}_{\iterate_k}^2 \leq \varepsilon^2\, .
    \end{align}
\end{theorem}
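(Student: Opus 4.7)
The plan is to obtain the stationary-point rate by combining two ingredients: a telescoping sufficient-decrease identity for $\mF$, and a gradient-to-displacement inequality that controls $\|\nabla_{\BW}\mF(\iterate_k)\|_{\iterate_k}^2$ in terms of the per-step Wasserstein movement $W_2^2(\iterate_k,\iterate_{k+1})/\eta^2$.

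For the sufficient decrease I would apply Lemma~\ref{lemma:discrete-evi} with $\sigma_k=0$, competitor $\nu=\iterate_k$, and the (always valid) convexity parameter $\alpha=-\beta$ guaranteed by $\beta$-smoothness, noting that the condition $\eta \le 1/\beta$ is the one supplied. Since $W_2^2(\iterate_k,\iterate_k)=0$, the one-step inequality collapses to
\[
\mF(\iterate_k)-\mF(\iterate_{k+1})\ \ge\ \tfrac{1}{2\eta}\,W_2^2(\iterate_k,\iterate_{k+1})\,,
\]
and telescoping yields $\sum_{k=0}^{N-1}W_2^2(\iterate_k,\iterate_{k+1})\le 2\eta\Delta$.

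For the gradient-to-displacement piece I would decompose $\nabla_{\BW}\mF(\iterate_k)=\nabla_{\BW}\mV(\iterate_k)+\nabla_{\BW}\mH(\iterate_k)$ and exploit two structural facts. First, because the forward step is a pushforward by the affine map $\id-\eta\,\nabla_{\BW}\mV(\iterate_k)$, one has $\|\nabla_{\BW}\mV(\iterate_k)\|_{\iterate_k}=W_2(\iterate_k,\iterate_{k+\half})/\eta$. Second, the first-order optimality condition of the JKO subproblem, combined with the expression for the BW gradient of $\nu\mapsto\tfrac{1}{2}W_2^2(\iterate_{k+\half},\nu)$, yields $\eta\,\nabla_{\BW}\mH(\iterate_{k+1})=T-\id$, where $T$ is the affine OT map from $\iterate_{k+1}$ to $\iterate_{k+\half}$, so that $\|\nabla_{\BW}\mH(\iterate_{k+1})\|_{\iterate_{k+1}}=W_2(\iterate_{k+\half},\iterate_{k+1})/\eta$. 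I would then transport $\nabla_{\BW}\mH$ from $\iterate_{k+1}$ back to $\iterate_k$ along the affine OT map between these two Gaussians, using the closed-form expression $\nabla_{\BW}\mH(\mu)(x)=-\Sigma_\mu^{-1}(x-m_\mu)$ together with the smoothness of $\mV$ to quantify the resulting error. The end product is an inequality of the form $\|\nabla_{\BW}\mF(\iterate_k)\|_{\iterate_k}^2\le 75\,W_2^2(\iterate_k,\iterate_{k+1})/\eta^2$, which combined with the decrease bound gives
\[
\min_{0\le k\le N-1}\|\nabla_{\BW}\mF(\iterate_k)\|_{\iterate_k}^2\ \le\ \frac{75}{N\eta^2}\sum_{k=0}^{N-1}W_2^2(\iterate_k,\iterate_{k+1})\ \le\ \frac{150\,\Delta}{\eta N}\,,
\]
and~\eqref{eq:foopt} then follows from the prescribed choices of $\eta$ and $N$.

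The main obstacle is this second step. A naive triangle-inequality argument only produces $W_2(\iterate_k,\iterate_{k+\half})+W_2(\iterate_{k+\half},\iterate_{k+1})\ge W_2(\iterate_k,\iterate_{k+1})$, which is the \emph{wrong} direction: the two half-step displacements can each be much larger than the full displacement if the forward and backward half-steps partially cancel. Capturing this cancellation forces us to work with the explicit algebraic formulas of Lemmas~\ref{lem:smooth} and~\ref{lem:cvx} rather than only the metric properties of the BW space, and carefully tracking constants through the Lipschitz-type estimate on $\mu\mapsto\Sigma_\mu^{-1}$ along the trajectory is what ultimately produces the factor $150$.
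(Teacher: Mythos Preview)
Your high-level plan---sufficient decrease via \Cref{lemma:discrete-evi} with $\nu=\iterate_k$, then a gradient-to-displacement bound yielding the factor $75$---is exactly the paper's route, and the constants you land on ($75$ and $150$) match.

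There is, however, one genuine missing ingredient. Your ``Lipschitz-type estimate on $\mu\mapsto\Sigma_\mu^{-1}$ along the trajectory'' can only produce a bound with a $\beta$-dependent constant if you first know that $\Sigma_k^{-1}\preceq\beta I$ uniformly in $k$. Nothing in your sketch prevents $\lambda_{\min}(\Sigma_k)$ from collapsing to zero, in which case the entropy gradient blows up and the estimate fails. The paper handles this via a separate eigenvalue-propagation lemma (\Cref{lemma:iterate-eigbounds} and \Cref{lemma:iterate-smoothness}): using the explicit JKO formula~\eqref{eq:BWjko}, one checks algebraically that if $\Sigma_0\succeq\beta^{-1}I$ and $\eta\le 1/\beta$, then $\Sigma_k\succeq\beta^{-1}I$ for all $k$. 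Only with this in hand does the paper establish the quantitative bound $\EE\|\nabla_{\BW}\mH(\iterate_{k+1})[X_{k+1}^\star]-\nabla_{\BW}\mH(\iterate_k)[X_k]\|^2\le 20\beta^2 W_2^2(\iterate_k,\iterate_{k+1})$ (\Cref{lemma:fake-coercivity}), whose proof goes through a smoothness inequality for $\mH$ along geodesics between Gaussians with bounded inverse covariance (\Cref{lemma:entropy-smoothness}), not merely smoothness of $\mV$.

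A minor second remark: the paper captures the cancellation you flagged via the pointwise identity $(X_k-X_{k+1})/\eta=\nabla_{\BW}\mV(\iterate_k)[X_k]+\nabla_{\BW}\mH(\iterate_{k+1})[X_{k+1}]$ along the specific coupling through $X_{k+\half}$, and correspondingly the decrease bound (\Cref{lemma:iterate-osi}) controls $\EE\|X_{k+1}-X_k\|^2$ for that same coupling rather than $W_2^2(\iterate_k,\iterate_{k+1})$. Both sides of your inequality are stated in $W_2$, so make sure your couplings are consistent.
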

\begin{proof}
    The proof is given in \Cref{sec:noiseless-smooth-rate-proof}.
\end{proof}

To the best of our knowledge, this is the first stationary point guarantee for Gaussian VI\@. The relevance of this result is that according to~\citet{katsevich2023approximation}, the favorable statistical properties of Gaussian VI arise, not due to the global minimization of the objective in~\eqref{eq:original}, but rather from the 
first-order optimality~\eqref{eq:foopt}. Hence, Theorem~\ref{thm:noiseless-smooth-rate} can be viewed as an algorithmic result for posterior approximation, even in the non-log-concave setting.

We also remark that in Theorem~\ref{thm:noiseless-smooth-rate}, although we assume that $V$ is smooth, it does \emph{not} imply that the objective $\mF$ is smooth over the Bures{--}Wasserstein space, due to the presence of the entropy term $\mH$.
In fact, the proof of Theorem~\ref{thm:noiseless-smooth-rate} requires a careful control of the eigenvalues of the iterates of FB{--}GVI\@.

\subsection{Convergence of Stochastic FB--GVI}

In this section, $(\iterate_k)_{k\in\NN}$ is the sequence of iterates defined by \eqref{eq:bwsfb}. To use Lemma~\ref{lemma:discrete-evi}, we first prove a bound on  $\sigma_k^2$, the variance of the BW stochastic gradient.

\begin{lemma}\label{lemma:grad-error-bound-text}
    If $V$ is convex and $\beta$-smooth, then
    \begin{align*}
        \sigma_k^2 
        \leq
        6\beta d + 12\beta^3 \lambda_{\max}(\hat{\Sigma})\, W_2^2(\iterate_k, \hat{\pi})\,.
    \end{align*}
    Moreover, if $V$ is $\alpha$-strongly convex, the bound above becomes
    \begin{align*}
        \sigma_k^2
        &\leq
        6\beta d + \frac{12\beta^3}{\alpha}\, W_2^2(\iterate_k, \hat{\pi})\,.
    \end{align*}
\end{lemma}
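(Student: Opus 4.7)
The plan is to split $\sigma_k^2$ into a ``gradient-variance'' and ``Hessian-variance'' contribution, and to bound each by coupling $\iterate_k$ to the minimizer $\hat\pi$ and invoking the first-order optimality conditions at $\hat\pi$ together with Gaussian Poincar\'e.

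First I expand $\|e_k\|_{\iterate_k}^2$ by integrating against $\iterate_k$. Because $\int (x - m_k)\,\dd \iterate_k(x) = 0$, the cross term between the affine part $(S_k - \EE_{\iterate_k}\nabla^2 V)(x - m_k)$ and the constant part $(b_k - \EE_{\iterate_k}\nabla V)$ vanishes, yielding
\begin{align*}
    \|e_k\|_{\iterate_k}^2
    = \|b_k - \EE_{\iterate_k}\nabla V\|^2
      + \mathrm{tr}\bigl((S_k - \EE_{\iterate_k}\nabla^2 V)^2\,\Sigma_k\bigr).
\end{align*}
Averaging over $\hat X_k \sim \iterate_k$ and dropping the mean-square terms $\|\EE_{\iterate_k}\nabla V\|^2$ and $\mathrm{tr}((\EE_{\iterate_k}\nabla^2 V)^2 \Sigma_k)$, I obtain the bound $\sigma_k^2 \leq \EE_{\iterate_k}\|\nabla V\|^2 + \EE_{\iterate_k} \mathrm{tr}((\nabla^2 V)^2 \Sigma_k)$.

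For the gradient piece, I couple $X \sim \iterate_k$ optimally with $Y \sim \hat\pi$ and use $\beta$-Lipschitzness of $\nabla V$ to get $\EE_{\iterate_k}\|\nabla V\|^2 \leq 2\beta^2 W_2^2(\iterate_k, \hat\pi) + 2\EE_{\hat\pi}\|\nabla V\|^2$. To control $\EE_{\hat\pi}\|\nabla V\|^2$, I read off the first-order optimality conditions from $\nabla_{\BW}\mF(\hat\pi) = 0$ together with the closed forms for the BW gradients of $\mV$ (Lemma~\ref{lem:smooth}) and $\mH$: these give $\EE_{\hat\pi}\nabla V = 0$ and $\EE_{\hat\pi}\nabla^2 V = \hat\Sigma^{-1}$. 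Hence $\EE_{\hat\pi}\|\nabla V\|^2 = \mathrm{Var}_{\hat\pi}(\nabla V)$, and the coordinate-wise Gaussian Poincar\'e inequality gives $\mathrm{Var}_{\hat\pi}(\nabla V) \leq \EE_{\hat\pi}\mathrm{tr}(\nabla^2 V\,\hat\Sigma\,\nabla^2 V)$. Applying the PSD trace inequality $\mathrm{tr}(A^2 B) \leq \|A\|_{\mathrm{op}}\,\mathrm{tr}(AB)$ with $A = \nabla^2 V \succeq 0$ (by convexity) and $B = \hat\Sigma \succ 0$ collapses this to $\beta\,\mathrm{tr}(\hat\Sigma \hat\Sigma^{-1}) = \beta d$.

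For the Hessian piece, I again invoke $C^2 \preceq \beta C$ for any PSD $C$ with $\|C\|_{\mathrm{op}} \leq \beta$ to reduce $\EE\,\mathrm{tr}((\nabla^2 V(\hat X_k))^2\,\Sigma_k) \leq \beta \EE\,\mathrm{tr}(\nabla^2 V(\hat X_k)\,\Sigma_k) = \beta\,\mathrm{tr}((\EE_{\iterate_k}\nabla^2 V)\,\Sigma_k)$. The naive bound $\beta\,\mathrm{tr}(\Sigma_k)$ would only give $\beta^2 d\,\lambda_{\max}(\hat\Sigma)$ which does not match the desired $\beta d$ constant, so I instead anchor this trace to $d$. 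By Stein's identity under $\iterate_k$, $\mathrm{tr}((\EE_{\iterate_k}\nabla^2 V)\Sigma_k) = \EE_{\iterate_k}[(X - m_k)^T \nabla V(X)]$, and the analogous quantity at $\hat\pi$ equals $d$ by the optimality relation above. Pushing $\hat\pi$ forward to $\iterate_k$ by the optimal transport map $T$ (an affine map $Y \mapsto A(Y - \hat m) + m_k$ with $A \hat\Sigma A = \Sigma_k$) and splitting $\nabla V(T(Y)) = \nabla V(Y) + (\nabla V(T(Y)) - \nabla V(Y))$, the first piece yields exactly $d$ (using Stein again and $\EE_{\hat\pi}\nabla^2 V = \hat\Sigma^{-1}$), while the second is controlled by Cauchy--Schwarz, $\beta$-Lipschitzness of $\nabla V$, and AM--GM against the gradient bound from the previous paragraph. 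This produces $\mathrm{tr}((\EE_{\iterate_k}\nabla^2 V)\Sigma_k) \lesssim d + \beta^2\lambda_{\max}(\hat\Sigma)\,W_2^2(\iterate_k, \hat\pi)$.

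Assembling the two pieces gives $\sigma_k^2 \lesssim \beta d + \beta^2 W_2^2(\iterate_k, \hat\pi) + \beta^3\lambda_{\max}(\hat\Sigma)\,W_2^2(\iterate_k, \hat\pi)$, and using $\lambda_{\max}(\hat\Sigma) \geq 1/\beta$ (from $\hat\Sigma^{-1} = \EE_{\hat\pi}\nabla^2 V \preceq \beta I$) to upgrade $\beta^2$ into $\beta^3 \lambda_{\max}(\hat\Sigma)$ yields the convex bound. The strongly convex bound follows from $\hat\Sigma^{-1} \succeq \alpha I$, i.e.\ $\lambda_{\max}(\hat\Sigma) \leq 1/\alpha$. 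The main obstacle is the sharp bound on $\mathrm{tr}((\EE_{\iterate_k}\nabla^2 V)\,\Sigma_k)$: anchoring it to $d$ via the optimal transport coupling, rather than naively via $\beta\,\mathrm{tr}(\Sigma_k)$, is what makes the clean $\beta d$ dimension-linear constant appear.
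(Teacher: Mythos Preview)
Your overall strategy matches the paper's closely: split $\sigma_k^2$ into gradient- and Hessian-variance pieces, reduce both (via $\nabla^2 V \preceq \beta I$ and Gaussian Poincar\'e/Brascamp--Lieb) to the quantity $\EE_{\iterate_k}\langle \nabla V(X),\, X - m_k\rangle = \mathrm{tr}\bigl((\EE_{\iterate_k}\nabla^2 V)\,\Sigma_k\bigr)$, and then anchor this to $d$ by coupling to $\hat\pi$ and invoking the stationarity conditions $\EE_{\hat\pi}\nabla V = 0$, $\EE_{\hat\pi}\nabla^2 V = \hat\Sigma^{-1}$. That is exactly the skeleton of the paper's proof.

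The gap is in the anchoring step for the Hessian piece. After pushing forward by the optimal map $T(Y) = A(Y-\hat m) + m_k$, your ``first piece'' is
\[
\EE_{\hat\pi}\bigl[(A(Y-\hat m))^\top \nabla V(Y)\bigr]
= \mathrm{tr}\bigl(A\,\EE_{\hat\pi}[(Y-\hat m)\,\nabla V(Y)^\top]^\top\bigr)
= \mathrm{tr}(A)\,,
\]
using Stein and $\EE_{\hat\pi}\nabla^2 V = \hat\Sigma^{-1}$; this is \emph{not} $d$ unless $A = I$. Similarly, the ``second piece'' $\EE[(A(Y-\hat m))^\top(\nabla V(T(Y)) - \nabla V(Y))]$ contains the factor $A(Y-\hat m) = X - m_k$, whose second moment is $\mathrm{tr}(\Sigma_k)$ --- precisely the quantity you are trying to control --- so Cauchy--Schwarz plus Lipschitzness alone do not close the loop, and it is not clear how ``AM--GM against the gradient bound'' would help since neither factor is $\nabla V(Y)$.

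The paper avoids both issues by expanding $\langle \nabla V(X_k),\, X_k - m_k\rangle$ bilinearly around the reference pair $(\nabla V(Z),\, Z - \hat m)$ with $(X_k,Z)$ optimally coupled, \emph{without} substituting $X_k - m_k = A(Z - \hat m)$. This yields four terms: the anchor $\EE\langle \nabla V(Z),\, Z-\hat m\rangle = d$ exactly, and three cross terms, each of which pairs a difference (bounded by $\beta W_2$ or $W_2$) with a quantity whose second moment is known at $\hat\pi$ (either $\EE_{\hat\pi}\|\nabla V\|^2 \le \beta d$ or $\mathrm{tr}(\hat\Sigma)$). A tuned Young's inequality on the term involving $\mathrm{tr}(\hat\Sigma)$ is what produces the $\beta^3\lambda_{\max}(\hat\Sigma)\,W_2^2$ factor. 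Your approach can be repaired along these lines (e.g.\ by further splitting $A(Y-\hat m) = (Y-\hat m) + (A-I)(Y-\hat m)$ and bounding $\mathrm{tr}(A-I)$ and $\EE\|(A-I)(Y-\hat m)\|^2$ by $W_2^2$), but as written the two claims above are the missing pieces.
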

\begin{proof}
See Appendix~\ref{sec:error-bnd-sto-oracle}.
\end{proof}

The bound on $\sigma_k^2$ is reminiscent of the common assumption made in the literature on stochastic gradient algorithms over $\RR^d$, that the stochastic gradient has sublinear growth~\citep{kushner2003stochastic, bottou2018optimization}. We emphasize that we do not assume this sublinear growth. Instead, Lemma~\ref{lemma:grad-error-bound-text} proves the sublinear growth for the BW stochastic gradient used in Stochastic FB--GVI.  Next, we obtain corollaries of Lemma~\ref{lemma:discrete-evi} for Stochastic FB--GVI by controlling $\sigma_k^2$ with Lemma~\ref{lemma:grad-error-bound-text}.

\begin{theorem}[Convex case, Stochastic FB--GVI]
    \label{thm:stoch-wc-rate}
    Suppose that $V$ is convex and $\beta$-smooth and that $0 < \eta \leq \frac{1}{2\beta}$.
    Define $c \defeq 24\beta^3\lambda_{\max}(\hat{\Sigma})$.
    Then,
    \begin{align*}
        \EE 
        \bigl[ 
        \min_{k \in \left\{ 1, \ldots, N \right\}}
        \mF(p_k)
        \bigr]
        - \mF(\hat{\pi})\leq
        \frac{2W_2^2(p_0, \hat{\pi})}{N\eta}
        + 2c\eta\, W_2^2(p_0, \hat{\pi})
        + 12\beta \eta d\,.
    \end{align*}
    In particular, for sufficiently small values of $\varepsilon^2/d$ and with
    \begin{align*}
        \eta \asymp \frac{\varepsilon^2}{c W_2^2(p_0, \hat{\pi}) \lor \beta d}\,,\quad\text{and}\quad        N \gtrsim \frac{W_2^2(p_0, \hat{\pi})}{\varepsilon^4}\, \bigl( cW_2^2(p_0, \hat{\pi}) \lor \beta d \bigr)\,,
    \end{align*}
    we obtain the guarantee
    \begin{align*}
        \EE 
        \bigl[ 
        \min_{k \in \left\{ 1, \ldots, N \right\}}
        \mF(p_k)
        \bigr]
        - \mF(\hat{\pi})
        &\leq 
        \varepsilon^2\,.
    \end{align*}
\end{theorem}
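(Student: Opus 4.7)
The natural starting point is to specialize the one-step inequality of Lemma~\ref{lemma:discrete-evi} to $\alpha = 0$ and $\nu = \hat{\pi}$, giving
\[
\EE W_2^2(p_{k+1}, \hat{\pi}) \leq \EE W_2^2(p_k, \hat{\pi}) - 2\eta\, \EE[\mF(p_{k+1}) - \mF(\hat{\pi})] + 2\eta^2\, \EE \sigma_k^2\,,
\]
and to plug in the variance bound from Lemma~\ref{lemma:grad-error-bound-text}, namely $\sigma_k^2 \leq 6\beta d + (c/2)\, W_2^2(p_k,\hat{\pi})$ with $c = 24\beta^3\lambda_{\max}(\hat{\Sigma})$. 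This yields, after rearranging,
\[
2\eta\, \EE[\mF(p_{k+1}) - \mF(\hat{\pi})] \leq \EE W_2^2(p_k, \hat{\pi}) - \EE W_2^2(p_{k+1}, \hat{\pi}) + c\eta^2\, \EE W_2^2(p_k, \hat{\pi}) + 12\eta^2 \beta d\,.
\]
If one could replace $\EE W_2^2(p_k,\hat{\pi})$ in the penultimate term by a constant multiple of $W_2^2(p_0,\hat{\pi})$, the bound would telescope immediately to the three terms claimed in the theorem after dividing by $2\eta N$ and invoking $\min_k \mF(p_k) \leq \tfrac{1}{N}\sum_k \mF(p_k)$.

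The main obstacle, and the step I expect to be the technical heart of the argument, is establishing exactly this uniform boundedness of the trajectory: since $\sigma_k^2$ grows with $W_2^2(p_k,\hat{\pi})$, the recursion for the second moment is of Gronwall type and could blow up. To handle this, I would first drop the non-negative $\mF(p_{k+1}) - \mF(\hat{\pi})$ term from the one-step inequality to obtain
\[
\EE W_2^2(p_{k+1}, \hat{\pi}) \leq (1 + c\eta^2)\, \EE W_2^2(p_k, \hat{\pi}) + 12\eta^2 \beta d\,,
\]
and then iterate:
\[
\EE W_2^2(p_k, \hat{\pi}) \leq (1+c\eta^2)^k\, W_2^2(p_0,\hat{\pi}) + 12\eta^2 \beta d \cdot \frac{(1+c\eta^2)^k - 1}{c\eta^2}\,.
\]
Under the prescribed scalings $\eta \asymp \varepsilon^2/(cW_2^2(p_0,\hat{\pi}) \vee \beta d)$ and $N \asymp W_2^2(p_0,\hat{\pi})(cW_2^2(p_0,\hat{\pi}) \vee \beta d)/\varepsilon^4$, one has $c\eta^2 N \leq 1$ and $12 N\eta^2\beta d \leq 12 W_2^2(p_0,\hat{\pi})$, so $(1+c\eta^2)^k \leq e$ for all $k \leq N$, and the second term is likewise at most an absolute constant times $W_2^2(p_0,\hat{\pi})$. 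This yields $\EE W_2^2(p_k,\hat{\pi}) \leq C\, W_2^2(p_0,\hat{\pi})$ for some absolute constant $C$, uniformly over $k \in \{0,\dots,N\}$.

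Equipped with this uniform bound, I would return to the rearranged one-step inequality, telescope over $k = 0, \ldots, N-1$, divide by $2\eta N$, and take $\min_k$ over the resulting Cesàro average of $\EE[\mF(p_{k+1}) - \mF(\hat{\pi})]$. Absorbing the constant $C$ into the constant $2$ in the stated bound (which can always be arranged by taking the hidden constant in the definition of $\eta$ smaller), one arrives at
\[
\EE\bigl[\min_{k \in \{1,\ldots,N\}} \mF(p_k)\bigr] - \mF(\hat{\pi}) \leq \frac{2W_2^2(p_0,\hat{\pi})}{N\eta} + 2c\eta\, W_2^2(p_0,\hat{\pi}) + 12\beta\eta d\,.
\]
Finally, plugging the specified $\eta$ and $N$ into each of these three terms shows that each is $O(\varepsilon^2)$: the first is $\asymp W_2^2(p_0,\hat{\pi})\,\varepsilon^2/W_2^2(p_0,\hat{\pi}) = \varepsilon^2$, the second is $\asymp c\varepsilon^2\, W_2^2(p_0,\hat{\pi})/(cW_2^2(p_0,\hat{\pi})\vee \beta d) \leq \varepsilon^2$, and the third is $\asymp \beta d\varepsilon^2/(cW_2^2(p_0,\hat{\pi})\vee \beta d) \leq \varepsilon^2$. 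The qualifier ``for sufficiently small $\varepsilon^2/d$'' enters in a single place, to guarantee that the choice of $\eta$ respects the feasibility constraint $\eta \leq 1/(2\beta)$ required by Lemma~\ref{lemma:discrete-evi}.
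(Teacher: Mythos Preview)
Your approach is correct for the final complexity guarantee but takes a genuinely different route from the paper, and in doing so proves slightly less than the theorem claims. The paper does not establish a uniform bound on $\EE W_2^2(p_k,\hat\pi)$; instead it multiplies the one-step inequality by exponentially decaying weights $e^{-(k+1)c\eta^2}$ so that the terms $e^{c\eta^2}\,\EE W_2^2(p_k,\hat\pi) - \EE W_2^2(p_{k+1},\hat\pi)$ telescope \emph{exactly}. This yields a bound of the form $\frac{W_2^2(p_0,\hat\pi)}{2\eta S_N(\eta)} + 6\beta\eta d$ with $S_N(\eta)=\sum_{k=1}^N e^{-kc\eta^2}$, and then $S_N(\eta) \gtrsim N\wedge (c\eta^2)^{-1}$ converts this into the three-term bound stated in the theorem. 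Crucially, this holds for \emph{all} $\eta\le\tfrac{1}{2\beta}$ and \emph{all} $N$, with no coupling between them.

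By contrast, your Gronwall step $\EE W_2^2(p_k,\hat\pi)\le C\,W_2^2(p_0,\hat\pi)$ only holds when $c\eta^2 N\lesssim 1$ (and when $N\eta^2\beta d\lesssim W_2^2(p_0,\hat\pi)$), so the first displayed inequality of the theorem is established only under the specific scaling of $\eta$ and $N$, not unconditionally as the theorem states. For the ``in particular'' conclusion this is harmless, since the prescribed $\eta,N$ satisfy exactly those constraints; but the general inequality is a sharper statement that your argument does not recover. The paper's weighted-telescoping trick is what buys the unconditional version.
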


\begin{proof}
    See \Cref{section:stoch-wc-rate-proof}.
\end{proof}

\begin{theorem}[Strongly convex case, Stochastic FB--GVI]
    \label{thm:stoch-sc-rate}
    Suppose that $V$ is $\alpha$-strongly convex and $\beta$-smooth, and that $\eta \leq \frac{\alpha^2}{48\beta^3}$. Then, %
    \begin{align*}
        \EE W_2^2(p_{N}, \hat{\pi})
        &\leq
        \exp\Bigl( -\frac{\alpha N \eta}{2} \Bigr)\,
        W_2^2(p_0, \hat{\pi})
        + \frac{24\beta\eta d}{\alpha}\,
        .
    \end{align*}
    In particular, for sufficiently small values of $\varepsilon^2/d$ and with
    \begin{align*}
        \eta \asymp \frac{\varepsilon^2}{\beta d}\,,\quad\text{and}\quad
        N \gtrsim \frac{\beta d}{\alpha\varepsilon^2} \log \frac{\alpha W_2^2(p_0, \hat{\pi})}{\varepsilon^2}\,,
    \end{align*}
    we obtain the guarantees
    \begin{align*}
        \alpha\, \EE W_2^2(p_N, \hat{\pi}) \leq \varepsilon^2\,,\quad\text{and}\quad
        \EE 
        \bigl[ 
        \min_{k \in \left\{ 1, \ldots, 2N \right\}}
        \mF(\iterate_k)
        \bigr]
        - \mF(\hat{\pi})
        \leq 
        \varepsilon^2\,.
    \end{align*}
\end{theorem}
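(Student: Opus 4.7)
The plan is to derive the two bounds in turn, both starting from the one-step inequality of Lemma~\ref{lemma:discrete-evi} applied with $\nu = \hat{\pi}$, and coupling it with the strongly convex variance bound from Lemma~\ref{lemma:grad-error-bound-text}.

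For the first bound on $\EE W_2^2(p_N,\hat{\pi})$, I would start from
\[
\EE W_2^2(\iterate_{k+1}, \hat{\pi}) \leq (1 - \alpha\eta)\, \EE W_2^2(\iterate_k, \hat{\pi}) - 2\eta\,\EE[\mF(\iterate_{k+1}) - \mF(\hat{\pi})] + 2\eta^2\,\EE\sigma_k^2\,,
\]
and drop the middle term since it is non-negative ($\hat{\pi}$ minimizes $\mF$). Substituting the strongly convex bound $\EE\sigma_k^2 \leq 6\beta d + \tfrac{12\beta^3}{\alpha}\,\EE W_2^2(\iterate_k,\hat{\pi})$ yields the linear recurrence
\[
\EE W_2^2(\iterate_{k+1}, \hat{\pi}) \leq \Bigl(1 - \alpha\eta + \tfrac{24\beta^3\eta^2}{\alpha}\Bigr)\,\EE W_2^2(\iterate_k, \hat{\pi}) + 12\beta\eta^2 d\,.
\]
The step-size restriction $\eta \leq \tfrac{\alpha^2}{48\beta^3}$ is precisely what is needed to force $\tfrac{24\beta^3\eta^2}{\alpha} \leq \tfrac{\alpha\eta}{2}$, so the contraction factor is at most $1 - \tfrac{\alpha\eta}{2} \leq \exp(-\tfrac{\alpha\eta}{2})$. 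Iterating and summing the geometric series of noise terms ($\sum_{k\geq 0}(1-\tfrac{\alpha\eta}{2})^k \leq \tfrac{2}{\alpha\eta}$) gives exactly
\[
\EE W_2^2(p_N,\hat{\pi}) \leq \exp\bigl(-\tfrac{\alpha N\eta}{2}\bigr)\,W_2^2(p_0,\hat{\pi}) + \tfrac{24\beta\eta d}{\alpha}\,.
\]

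For the particular parameter choices, the step size $\eta \asymp \varepsilon^2/(\beta d)$ makes the noise term $O(\varepsilon^2/\alpha)$, and $N \gtrsim \tfrac{\beta d}{\alpha\varepsilon^2}\log\tfrac{\alpha W_2^2(p_0,\hat{\pi})}{\varepsilon^2} = \tfrac{2}{\alpha\eta}\log\tfrac{\alpha W_2^2(p_0,\hat{\pi})}{\varepsilon^2}$ makes the transient term $O(\varepsilon^2/\alpha)$ as well, yielding $\alpha\,\EE W_2^2(p_N,\hat{\pi}) \leq \varepsilon^2$.

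For the objective gap, I would return to the one-step inequality \emph{without} dropping the middle term, rearrange it as
\[
2\eta\,\EE[\mF(\iterate_{k+1}) - \mF(\hat{\pi})] \leq (1-\alpha\eta)\,\EE W_2^2(\iterate_k,\hat{\pi}) - \EE W_2^2(\iterate_{k+1},\hat{\pi}) + 2\eta^2\,\EE\sigma_k^2\,,
\]
and sum over $k \in \{N, \ldots, 2N-1\}$. The $W_2^2$-differences telescope (losing only an $\alpha\eta$-factor on the retained term, which can be absorbed into an $\EE W_2^2(\iterate_N,\hat{\pi})$ contribution), and the variance sum is controlled using Lemma~\ref{lemma:grad-error-bound-text} together with the $W_2^2$ bounds from the first part, which hold uniformly for $k \geq N$ (since the contraction only improves matters). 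Dividing by $2N\eta$ and using $\EE[\min_k X_k] \leq \min_k \EE[X_k]$ gives the claimed $\min_{k \in \{1,\ldots,2N\}} \EE[\mF(\iterate_k) - \mF(\hat{\pi})] \leq \varepsilon^2$ under the same $(\eta,N)$ scaling.

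The main obstacle is purely bookkeeping rather than conceptual: one must carefully handle the fact that Lemma~\ref{lemma:grad-error-bound-text} introduces a $W_2^2$-dependent term into the variance, creating a feedback loop with the very quantity we are trying to control. The step-size restriction $\eta \leq \alpha^2/(48\beta^3)$ is chosen precisely to close this loop, and I expect the proof to track constants carefully through the geometric-series iteration and the telescoping sum. No new geometric machinery beyond Lemma~\ref{lemma:discrete-evi} and Lemma~\ref{lemma:grad-error-bound-text} should be needed.
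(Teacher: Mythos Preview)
Your treatment of the $W_2^2$ bound is essentially identical to the paper's: both plug Lemma~\ref{lemma:grad-error-bound-text} into Lemma~\ref{lemma:discrete-evi} with $\nu=\hat\pi$, use $\eta\le\alpha^2/(48\beta^3)$ to absorb the variance feedback into the contraction factor, and iterate the resulting linear recurrence.

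For the objective-gap guarantee, your route differs from the paper's. The paper does not telescope the one-step inequality over $k\in\{N,\dots,2N-1\}$ directly; instead it invokes Theorem~\ref{thm:stoch-wc-rate} (the convex case) as a black box, treating the algorithm as ``reinitialized'' at $p_N$ and checking that the choices of $\eta$ and $N$ are compatible with the hypotheses of that theorem. Your direct telescoping is correct and in fact a bit cleaner here: once the step-size restriction forces the effective contraction factor to be at most $1$, the sum $\sum_{k=N}^{2N-1}[\EE W_2^2(p_k,\hat\pi)-\EE W_2^2(p_{k+1},\hat\pi)]$ collapses without needing the exponential weights $e^{-kc\eta^2}$ that Theorem~\ref{thm:stoch-wc-rate} introduces to cope with the absence of strong convexity. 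The paper's route is more modular (reusing an existing result) but requires verifying parameter compatibility; yours is self-contained and avoids that bookkeeping. Both lead to the same bound.
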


\begin{proof}
    See \Cref{section:stoch-sc-rate-proof}.
\end{proof}

To our knowledge, Theorem~\ref{thm:stoch-wc-rate} is the first result to provide a complexity result in terms of the objective gap in Problem~\eqref{eq:original}, for log-smooth log-concave target distributions. Moreover, Theorem~\ref{thm:stoch-sc-rate} improves upon the state-of-the-art obtained in~\cite{lambert2022variational} for strongly log-concave target distributions. In particular, ignoring logarithmic factors, their iteration complexity (when written in a scale-invariant way) reads $\widetilde O(\frac{\beta^2 d}{\alpha^2 \varepsilon^2})$, whereas ours reads $\widetilde O(\frac{\beta d}{\alpha \varepsilon^2})$. Note that the linear dependence on the condition number $\beta/\alpha$ is to be expected for gradient descent methods.
We remark that our analysis crucially makes use of the proximal operator (the BW JKO) on the non-smooth entropy in order to obtain our improved rates.

\section{Simulations}\label{scn:exp}

In this section, via elementary simulations\footnote{Code for our experiments can be found at \url{https://github.com/mzydiao/FBGVI/blob/main/FBGVI-Experiments.ipynb}.}, we demonstrate that FBGVI is implementable, practical and competitive with the Bures–Wasserstein gradient descent (BWGD) method of~\cite{lambert2022variational}. We consider two examples:
\begin{enumerate}
    \item \textbf{Gaussian targets.} For the first experiment,
        we consider a scenario where the target density is $\pi(x) \propto \exp( -\half\, \langle (x - \mu), \Sigma^{-1}\, (x - \mu) \rangle )$,
        where
        $\mu \sim \text{Unif}([0, 1]^{10})$
        and $\Sigma^{-1} = U \diag \begin{bmatrix}
            10^{-9} & 10^{-8} & \cdots & 1
        \end{bmatrix}U^\T$, with $U \in \RR^{10\times 10}$ chosen as a uniformly random orthogonal matrix.
        In this case, we have that $\pi \in \BW(\RR^{10})$, so the solution to Problem~\eqref{eq:original}
        is precisely $\pi$,
        and furthermore we have that $\pi$ is $10^{-9}$-strongly log-concave and $1$-log-smooth.

        We run FB--GVI and stochastic FB--GVI with target potential $\pi \propto \exp(-V)$ initialized at $\iterate_0 = \mN(0, I_{10})$, where $I_{10}$ is the $10\times 10$ identity matrix.
        The step size $\eta$ is varied, and the resulting
        plots of $\log \KL{\iterate_k}{\pi}$
        for different choices of $\eta$
        are displayed in \Cref{fig:gaussian}.
        \begin{figure}
            \centering
            \includegraphics{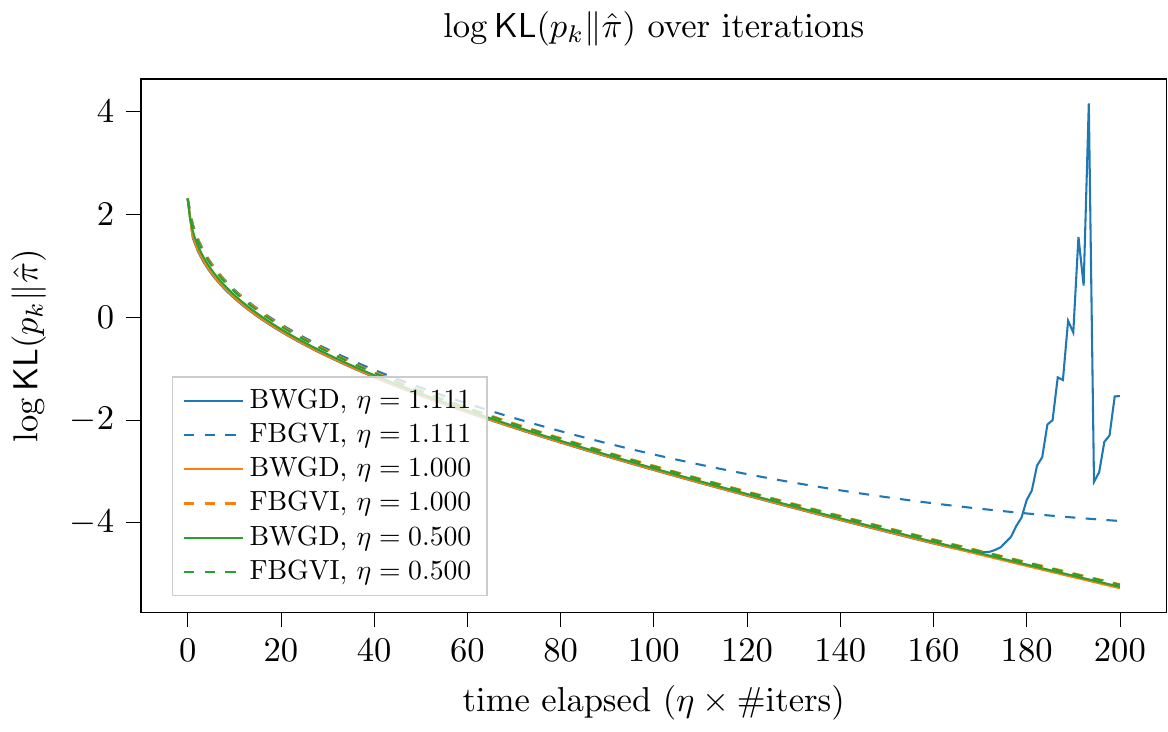}
            \includegraphics{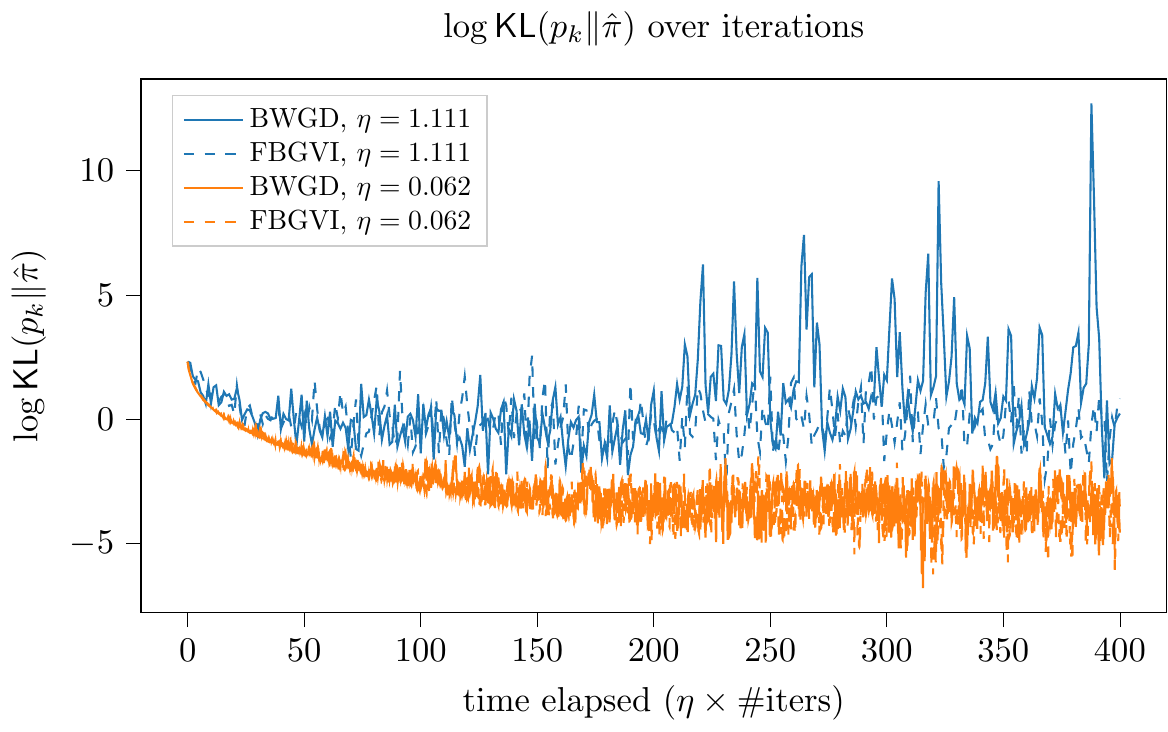}
            \caption{Gaussian target experiment: results for FB--GVI (top) and stochastic FB--GVI (bottom).}
            \label{fig:gaussian}
        \end{figure}
    \item \textbf{Bayesian logistic regression.}
        We consider the following generative model:
        given a parameter $\theta\in \RR^d$,
        we draw 
        i.i.d.\ samples $\{(X_i,Y_i)\}_{i=1}^n \in (\mathbb{R}^d\times\{0,1\})^n$
        with
        \[
        X_i \iid \mN(0, I_d)\,, 
        \quad
        Y_i \mid X_i \sim \text{Bern}(  e^{\left\langle \theta, X_i \right\rangle})\,.
        \]
        Given these samples $\left\{ (X_i, Y_i) \right\}_{i=1}^n$ and a uniform (improper) prior on $\theta$, the posterior on $\theta$ is given by
        \begin{align*}
            V(\theta) = \sum_{i=1}^{n}
            \,\bigl[ 
            \ln( 1 +  e^{\left\langle \theta, X_i \right\rangle}) - Y_i\, \left\langle \theta, X_i \right\rangle
            \bigr]\,.
        \end{align*}
        We run stochastic FB--GVI with $\pi \propto \exp (-V)$ initialized at $\iterate_0 = \mN\left( 0, I_d \right)$ with varying step sizes $\eta$.
        Since in this scenario we do not know the true
        minimizer $\hat \pi$ nor the normalization
        constant of $\pi$, we cannot directly
        compute $\KL{\iterate_k}{\pi}$ nor $W_2^2(\iterate_k, \hat \pi)$.
        However, we can still estimate the objective function 
        $
        \mF(\iterate_k)
        $
        as well as the squared BW gradient norm
        $
        \EE_{\iterate_k} \norm{\nabla_{\mathsf{BW}} \mF(\iterate_k)}^2
        $
        empirically by drawing samples from $\iterate_k$.
        For each choice of step size $\eta$, we plot our empirical estimates of $\mF(\iterate_k)$ and $\EE_{\iterate_k} \norm{\nabla_{\mathsf{BW}} \mF(\iterate_k)}^2$ over iterations
        in \Cref{fig:logistic}.
        \begin{figure}
            \centering
            \includegraphics{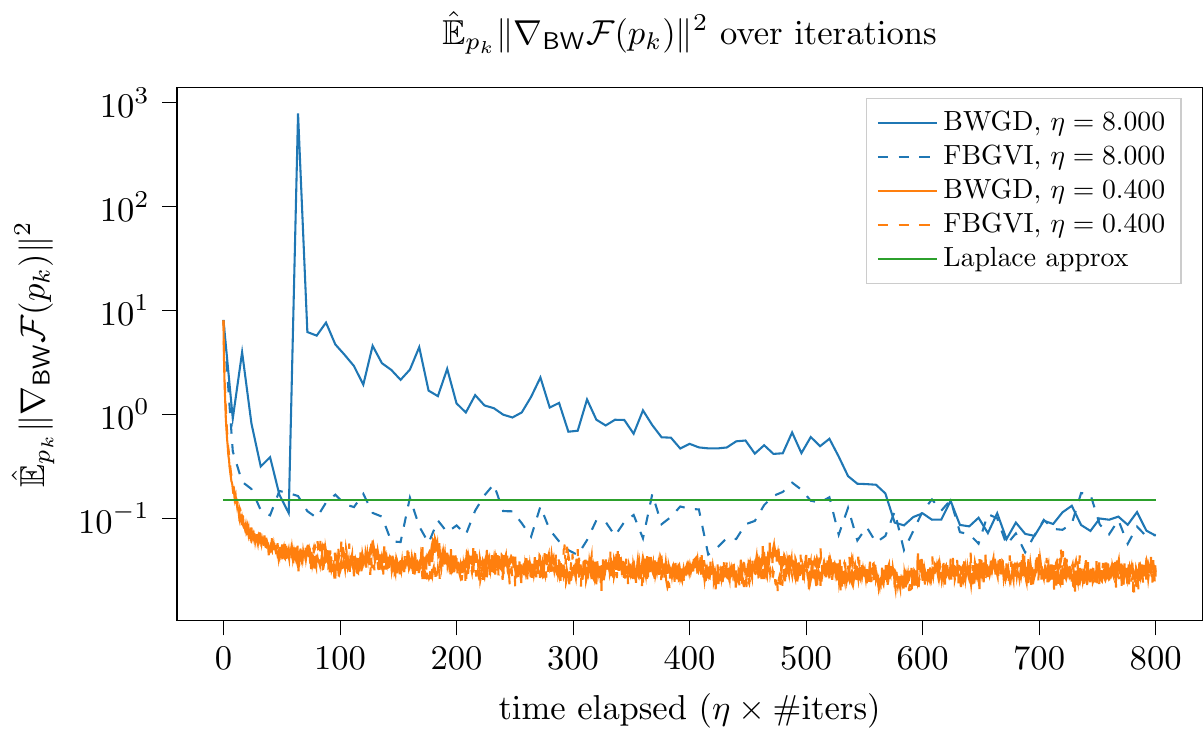}
            \includegraphics{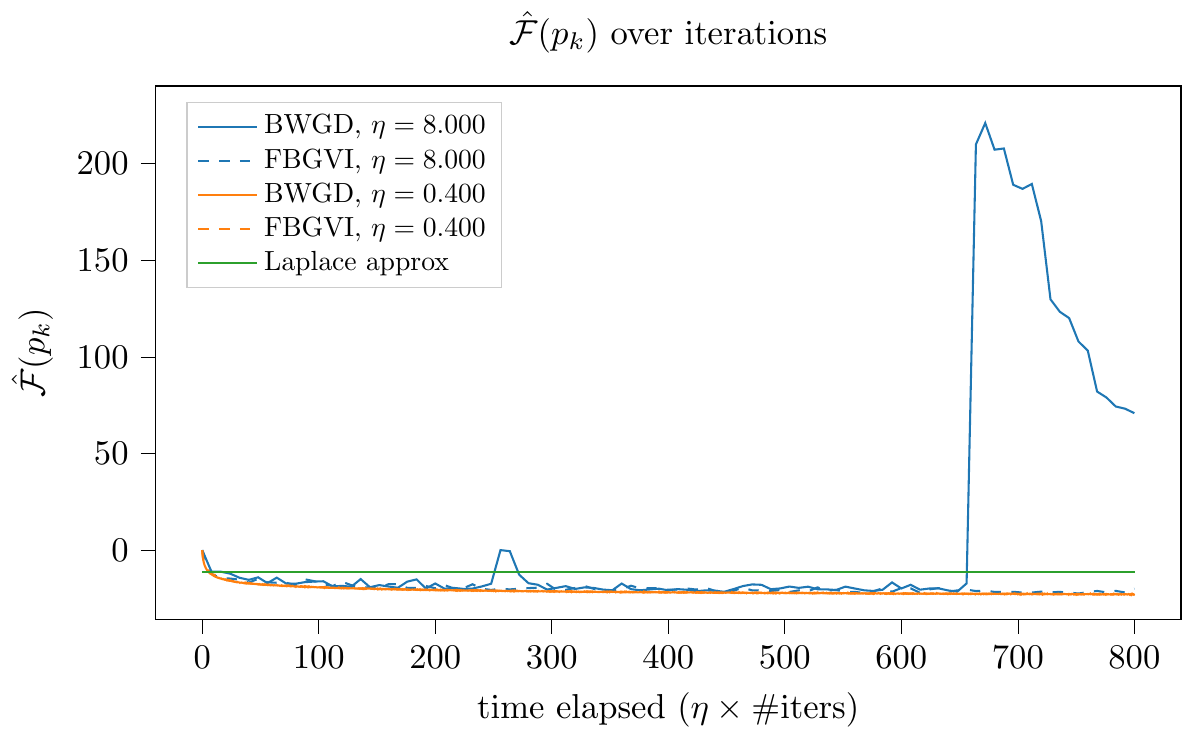}
            \caption{Bayesian logistic regression experiment: plots of $\log \hat{\EE}_{\iterate_k}\norm{\nabla_{\mathsf{BW}} \mF(\iterate_k)}^2$ (top) and of $\hat{\mF}(\iterate_k)$ (bottom) for stochastic FB--GVI.}
            \label{fig:logistic}
        \end{figure}
\end{enumerate}
Our results are provided in \Cref{fig:gaussian} and \Cref{fig:logistic}. Based on the plots, we make the following observations:
\begin{enumerate}
    \item (Stochastic) FB--GVI performs as well as BWGD, if not better. In addition, for sufficiently small $\eta$, both FB--GVI and BWGD attain lower objective than the Laplace approximation as seen in \Cref{fig:logistic}. This observation was also made in~\cite{katsevich2023approximation} for BWGD.
    \item FB--GVI is stable up to much larger step sizes than BWGD, mirroring the comparative stability of proximal gradient methods versus gradient descent methods in Euclidean space, especially when the step size is large (see, \emph{e.g.}, \cite{toulis2016towards,toulis2021proximal}).
\end{enumerate}
Our empirical results, combined with our theoretical guarantees, lend convincing evidence in support of using FB--GVI for Gaussian variational inference.

\section{Related works}\label{scn:related}

We now discuss streams of work that are closely related to ours, and place our work in the context of the larger literature on sampling and variational inference.

\paragraph{Optimization algorithms for Gaussian VI.} Algorithms for solving Gaussian VI have been considered in~\citet{paisley2012variational,ranganath2014black,lambert2022recursive}. The general approach is to parametrize the set of Gaussian distributions and to apply Euclidean optimization.
In particular, \citet{alquier2020concentration} noticed that when $\pi$ is the posterior distribution in a Bayesian logistic regression, Problem~\eqref{eq:original} becomes convex with a certain choice of parametrization. In this case, they also characterized the statistical properties of the iterates of gradient descent. 
Other settings in which the corresponding optimization problem is convex are provided in~\citet{challis2013gaussian,domke2020provable}. In particular,~\cite{domke2020provable} showed under the parameterization of~\cite{alquier2020concentration}, the Euclidean smoothness/convexity properties of the negative log-density of the model give rise to the same smoothness/convexity properties on the parameter space. However, to obtain convergence rates in the stochastic setting, one needs to control the variance of the stochastic gradient. This non-trivial task is not carried out in~\cite{domke2020provable}. In our work, the required variance control is established in Lemma~\ref{lemma:grad-error-bound-text}, and forms a crucial step in obtaining our convergence rates.

Algorithms based on natural gradient methods~\citep{zhang2018noisy,lin2019fast, lin2020handling} and normalizing flows~\citep{rezende2015variational,kingma2016improved,caterini2021variational} have also been proposed for variational inference. However, to the best of our knowledge, convergence results for such methods are lacking in the literature.

Finally, the closest related work to ours in this literature is that of~\citet{lambert2022variational}, who similarly proposed an optimization algorithm over the BW space, called Bures--Wasserstein Stochastic Gradient Descent (BW--SGD), to solve Problem~\eqref{eq:original}. Their algorithm BW--SGD relies on taking the gradient of the non-smooth entropy, and in particular they were only able to provide a (suboptimal) rate of convergence when $\pi$ is strongly log-concave.
In this work, we not only improve upon their convergence rate in the strongly log-concave case,
but also demonstrate a convergence rate for the log-concave case as well.

\paragraph{Minimization of $\mathsf{KL}$ over the Wasserstein space.} As mentioned previously, our approach has roots in the recent literature on viewing sampling methods as optimization algorithms over the Wasserstein space. %

For example, the Langevin Monte Carlo (LMC) algorithm~\citep{dalalyan2017theoretical} is an MCMC algorithm to sample from the target distribution $\pi$. The theory of Wasserstein gradient flows~\citep{ambrosio2008gradient} provides the mathematical tools to view LMC (and its many variants) as an optimization algorithm over Wasserstein space. In the case of LMC, the objective to minimize is $\KL{\cdot}{\pi}$. Therefore, one can use optimization analysis (over the Wasserstein space) to show convergence bounds for LMC~\citep{sampling-as-opt,durmus2019analysis,balasubramanian2022towards, chen2022improved, sampling-book}. 

Stein Variational Gradient Descent~\citep{SVGD,liu2017stein} is another method that can be seen as an optimization algorithm for minimizing $\KL{\cdot}{\pi}$. SVGD is a deterministic algorithm that drives the empirical distribution of a set of particles to fit $\pi$. The iterations of SVGD are computed by iterating a well chosen map $T$ such that $T - I$ belongs to a Reproducing Kernel Hilbert Space (RKHS). Little is known about the convergence rates of SVGD~\citep{duncan2019geometry, lu2019scaling, 
 chewi2020svgd, korba2020non, salim2022convergence, he2022regularized, shi2022finite}. However, there is an interesting connection between SVGD and BW--GD~\citep{lambert2022variational}: when the number of particles of SVGD tends to infinity (the ``mean-field'' limit), the iterations of BW--GD are equivalent to the iterations of SVGD if the RKHS is the set of affine functions with symmetric linear part. 
We also remark that other heuristic algorithms for particle-based optimization over Wasserstein spaces have been proposed, for example,  in~\cite{carrillo2019blob, AlvSchMro22icnn, backhoff2022stochastic, wang2022projected,yao2022mean} without any non-asymptotic convergence guarantees.

Another closely related work to ours is that of~\citet{WPGA}. In the same vein as our work, they view the objective $\KL{\cdot}{\pi}$ as a composite functional over the Wasserstein space. They propose a forward-backward algorithm, involving the JKO of the entropy, with strong convergence properties. However, they do not discuss the implementation of the JKO of the entropy. Therefore, to our knowledge, their algorithm is not implementable. On the contrary, our algorithm relies on the JKO of the entropy over the BW space, which is shown to admit a closed form.

\paragraph{(Non--smooth) manifold optimization.} Our work is also morally related to recent works developing efficient algorithms for solving non-smooth optimization problems over certain manifolds. 
For example, in the deterministic setting,~\citet{li2021weakly} analyzed the sub-gradient method,~\citet{chen2020proximal, huang2022riemannian} analyzed the proximal gradient method, ~\citet{chen2021manifold} analyzed the proximal point method, and~\citet{wang2022manifold} analyzed the proximal linear method. Stochastic versions were considered in~\citet{li2022stochastic, wang2022manifold}. We also refer to~\citet{zhang2021riemannian, hu2022constraint, peng2022riemannian, zhang2022riemannian} for other recent advances in non--smooth manifold optimization. Several of the above works consider the general setting of a Riemannian manifold.
While the BW space is a Riemannian manifold, it is also a subset of the Wasserstein space, a structure we leverage in our work to prove our convergence results.

The geometry of the BW space was investigated in~\citet{modin2017matrixdecomposition, malmonpis2018bw, bhatiajainlim2019bures}, and optimization over this space has proven to be fruitful for various applications, see, for example,~\citet{gd-bw-barycenters, averaging-on-bw, hanetal21bwgeometry, lambert2022variational, luotri22matrixfactorization, maulegrig22bwlowrank}.

\section{Conclusion}
\label{sec:ccl}
We proposed a novel optimization algorithm, (Stochastic) FB--GVI, for solving the Gaussian VI problem in~\eqref{eq:original}. We view this algorithm as performing optimization over the Bures--Wasserstein space, echoing a stream of successful works on optimization-inspired design and analysis of sampling and variational inference algorithms. Using this perspective, we also provided new or state-of-the-art convergence rates for solving~\eqref{eq:original}, depending on the regularity assumptions on $\pi$. As immediate future work, it is intriguing to study the statistical properties (consistency, normal approximation bounds, moment estimation bounds, and robustness properties) of the proposed (Stochastic) FW--GVI algorithm on various specific practical problems of interest. 

At a broader level, our work opens the door to the following question: Can we develop a rigorous algorithmic framework for general VI, \textit{i.e.}, Problem~\eqref{eq:original} where $\PG(\RR^d)$ is replaced by a different or larger set of distributions (for example, mixtures of Gaussians)? We believe that this paper provides a concrete step toward this general goal. 

\subsection*{Acknowledgements}
We thank anonymous reviewers for suggesting Theorem~\ref{thm:noiseless-wc-no-rate} and its proof, and for bringing the work of~\cite{domke2020provable} to our attention. KB was supported in part by National Science Foundation (NSF) grant DMS-2053918. SC was supported
by the NSF TRIPODS program (award DMS-2022448).

\bibliography{refs}

\begin{thebibliography}{84}
\providecommand{\natexlab}[1]{#1}
\providecommand{\url}[1]{\texttt{#1}}
\expandafter\ifx\csname urlstyle\endcsname\relax
  \providecommand{\doi}[1]{doi: #1}\else
  \providecommand{\doi}{doi: \begingroup \urlstyle{rm}\Url}\fi

\bibitem[Ahn and Chewi(2021)]{ahnchewi2021mirrorlangevin}
K.~Ahn and S.~Chewi.
\newblock Efficient constrained sampling via the mirror-{L}angevin algorithm.
\newblock In M.~Ranzato, A.~Beygelzimer, K.~Nguyen, P.~S. Liang, J.~W. Vaughan,
  and Y.~Dauphin, editors, \emph{Advances in Neural Information Processing
  Systems}, volume~34, pages 28405--28418. Curran Associates, Inc., 2021.

\bibitem[Alquier and Ridgway(2020)]{alquier2020concentration}
P.~Alquier and J.~Ridgway.
\newblock Concentration of tempered posteriors and of their variational
  approximations.
\newblock \emph{The Annals of Statistics}, 48\penalty0 (3):\penalty0
  1475--1497, 2020.

\bibitem[Altschuler et~al.(2021)Altschuler, Chewi, Gerber, and
  Stromme]{averaging-on-bw}
J.~Altschuler, S.~Chewi, P.~R. Gerber, and A.~Stromme.
\newblock Averaging on the {B}ures--{W}asserstein manifold: dimension-free
  convergence of gradient descent.
\newblock In M.~Ranzato, A.~Beygelzimer, Y.~Dauphin, P.~Liang, and J.~W.
  Vaughan, editors, \emph{Advances in Neural Information Processing Systems},
  volume~34, pages 22132--22145. Curran Associates, Inc., 2021.

\bibitem[Alvarez-Melis et~al.(2022)Alvarez-Melis, Schiff, and
  Mroueh]{AlvSchMro22icnn}
D.~Alvarez-Melis, Y.~Schiff, and Y.~Mroueh.
\newblock Optimizing functionals on the space of probabilities with input
  convex neural networks.
\newblock \emph{Transactions on Machine Learning Research}, 2022.

\bibitem[Ambrosio et~al.(2008)Ambrosio, Gigli, and
  Savar{\'e}]{ambrosio2008gradient}
L.~Ambrosio, N.~Gigli, and G.~Savar{\'e}.
\newblock \emph{Gradient flows: in metric spaces and in the space of
  probability measures}.
\newblock Springer Science \& Business Media, 2008.

\bibitem[Atchad{\'e} et~al.(2017)Atchad{\'e}, Fort, and
  Moulines]{atchade2017perturbed}
Y.~F. Atchad{\'e}, G.~Fort, and E.~Moulines.
\newblock On perturbed proximal gradient algorithms.
\newblock \emph{The Journal of Machine Learning Research}, 18\penalty0
  (1):\penalty0 310--342, 2017.

\bibitem[Backhoff-Veraguas et~al.(2022)Backhoff-Veraguas, Fontbona, Rios, and
  Tobar]{backhoff2022stochastic}
J.~Backhoff-Veraguas, J.~Fontbona, G.~Rios, and F.~Tobar.
\newblock Stochastic gradient descent in {W}asserstein space.
\newblock \emph{arXiv preprint arXiv:2201.04232}, 2022.

\bibitem[Balasubramanian et~al.(2022)Balasubramanian, Chewi, Erdogdu, Salim,
  and Zhang]{balasubramanian2022towards}
K.~Balasubramanian, S.~Chewi, M.~A. Erdogdu, A.~Salim, and S.~Zhang.
\newblock Towards a theory of non-log-concave sampling: first-order
  stationarity guarantees for {Langevin Monte Carlo}.
\newblock In \emph{Conference on Learning Theory}, pages 2896--2923. PMLR,
  2022.

\bibitem[Barber and Bishop(1997)]{barber1997ensemble}
D.~Barber and C.~Bishop.
\newblock Ensemble learning for multi-layer networks.
\newblock \emph{Advances in Neural Information Processing Systems}, 10, 1997.

\bibitem[Bauschke et~al.(2011)Bauschke, Combettes, et~al.]{bauschke2011convex}
H.~H. Bauschke, P.~L. Combettes, et~al.
\newblock \emph{Convex analysis and monotone operator theory in {H}ilbert
  spaces}, volume 408.
\newblock Springer, 2011.

\bibitem[Bernton(2018)]{bernton2018jko}
E.~Bernton.
\newblock {L}angevin {M}onte {C}arlo and {JKO} splitting.
\newblock In S.~Bubeck, V.~Perchet, and P.~Rigollet, editors, \emph{Proceedings
  of the 31st Conference on Learning Theory}, volume~75 of \emph{Proceedings of
  Machine Learning Research}, pages 1777--1798. PMLR, 06--09 Jul 2018.

\bibitem[Bhatia et~al.(2019)Bhatia, Jain, and Lim]{bhatiajainlim2019bures}
R.~Bhatia, T.~Jain, and Y.~Lim.
\newblock On the {B}ures--{W}asserstein distance between positive definite
  matrices.
\newblock \emph{Expo. Math.}, 37\penalty0 (2):\penalty0 165--191, 2019.

\bibitem[Bianchi et~al.(2019)Bianchi, Hachem, and Salim]{bianchi2019constant}
P.~Bianchi, W.~Hachem, and A.~Salim.
\newblock A constant step forward-backward algorithm involving random maximal
  monotone operators.
\newblock \emph{J. Convex Anal.}, 26\penalty0 (2):\penalty0 397--436, 2019.

\bibitem[Blei et~al.(2017)Blei, Kucukelbir, and McAuliffe]{blei2017variational}
D.~M. Blei, A.~Kucukelbir, and J.~D. McAuliffe.
\newblock Variational inference: a review for statisticians.
\newblock \emph{Journal of the American Statistical Association}, 112\penalty0
  (518):\penalty0 859--877, 2017.

\bibitem[Bottou et~al.(2018)Bottou, Curtis, and
  Nocedal]{bottou2018optimization}
L.~Bottou, F.~E. Curtis, and J.~Nocedal.
\newblock Optimization methods for large-scale machine learning.
\newblock \emph{SIAM Review}, 60\penalty0 (2):\penalty0 223--311, 2018.

\bibitem[Brascamp and Lieb(1976)]{brascamplieb}
H.~J. Brascamp and E.~H. Lieb.
\newblock On extensions of the {B}runn--{M}inkowski and
  {P}r\'{e}kopa--{L}eindler theorems, including inequalities for log concave
  functions, and with an application to the diffusion equation.
\newblock \emph{J. Functional Analysis}, 22\penalty0 (4):\penalty0 366--389,
  1976.

\bibitem[Carrillo et~al.(2019)Carrillo, Craig, and
  Patacchini]{carrillo2019blob}
J.~A. Carrillo, K.~Craig, and F.~S. Patacchini.
\newblock A blob method for diffusion.
\newblock \emph{Calculus of Variations and Partial Differential Equations},
  58:\penalty0 1--53, 2019.

\bibitem[Caterini et~al.(2021)Caterini, Cornish, Sejdinovic, and
  Doucet]{caterini2021variational}
A.~Caterini, R.~Cornish, D.~Sejdinovic, and A.~Doucet.
\newblock Variational inference with continuously-indexed normalizing flows.
\newblock In \emph{Uncertainty in Artificial Intelligence}, pages 44--53. PMLR,
  2021.

\bibitem[Challis and Barber(2013)]{challis2013gaussian}
E.~Challis and D.~Barber.
\newblock Gaussian {K}ullback--{L}eibler approximate inference.
\newblock \emph{Journal of Machine Learning Research}, 14\penalty0 (8), 2013.

\bibitem[Chen et~al.(2020)Chen, Ma, Man-Cho~So, and Zhang]{chen2020proximal}
S.~Chen, S.~Ma, A.~Man-Cho~So, and T.~Zhang.
\newblock Proximal gradient method for nonsmooth optimization over the
  {S}tiefel manifold.
\newblock \emph{SIAM Journal on Optimization}, 30\penalty0 (1):\penalty0
  210--239, 2020.

\bibitem[Chen et~al.(2021)Chen, Deng, Ma, and So]{chen2021manifold}
S.~Chen, Z.~Deng, S.~Ma, and A.~M.-C. So.
\newblock Manifold proximal point algorithms for dual principal component
  pursuit and orthogonal dictionary learning.
\newblock \emph{IEEE Transactions on Signal Processing}, 69:\penalty0
  4759--4773, 2021.

\bibitem[Chen et~al.(2022)Chen, Chewi, Salim, and Wibisono]{chen2022improved}
Y.~Chen, S.~Chewi, A.~Salim, and A.~Wibisono.
\newblock Improved analysis for a proximal algorithm for sampling.
\newblock In \emph{Conference on Learning Theory}, pages 2984--3014. PMLR,
  2022.

\bibitem[Ch{\'e}rief-Abdellatif et~al.(2019)Ch{\'e}rief-Abdellatif, Alquier,
  and Khan]{cherief2019generalization}
B.-E. Ch{\'e}rief-Abdellatif, P.~Alquier, and M.~E. Khan.
\newblock A generalization bound for online variational inference.
\newblock In \emph{Asian Conference on Machine Learning}, pages 662--677. PMLR,
  2019.

\bibitem[Chewi(2023)]{sampling-book}
S.~Chewi.
\newblock \emph{Log-concave sampling}.
\newblock 2023.
\newblock Available at \url{https://chewisinho.github.io/}.

\bibitem[Chewi et~al.(2020{\natexlab{a}})Chewi, Le~Gouic, Lu, Maunu, and
  Rigollet]{chewi2020svgd}
S.~Chewi, T.~Le~Gouic, C.~Lu, T.~Maunu, and P.~Rigollet.
\newblock {SVGD} as a kernelized {W}asserstein gradient flow of the chi-squared
  divergence.
\newblock \emph{Advances in Neural Information Processing Systems},
  33:\penalty0 2098--2109, 2020{\natexlab{a}}.

\bibitem[Chewi et~al.(2020{\natexlab{b}})Chewi, Maunu, Rigollet, and
  Stromme]{gd-bw-barycenters}
S.~Chewi, T.~Maunu, P.~Rigollet, and A.~J. Stromme.
\newblock Gradient descent algorithms for {B}ures--{W}asserstein barycenters.
\newblock In J.~Abernethy and S.~Agarwal, editors, \emph{Proceedings of Thirty
  Third Conference on Learning Theory}, volume 125 of \emph{Proceedings of
  Machine Learning Research}, pages 1276--1304. PMLR, 09--12 Jul
  2020{\natexlab{b}}.

\bibitem[Dalalyan(2017)]{dalalyan2017theoretical}
A.~S. Dalalyan.
\newblock Theoretical guarantees for approximate sampling from smooth and
  log-concave densities.
\newblock \emph{Journal of the Royal Statistical Society: Series B (Statistical
  Methodology)}, 79\penalty0 (3):\penalty0 651--676, 2017.

\bibitem[Domke(2020)]{domke2020provable}
J.~Domke.
\newblock Provable smoothness guarantees for black-box variational inference.
\newblock In \emph{International Conference on Machine Learning}, pages
  2587--2596. PMLR, 2020.

\bibitem[Duncan et~al.(2019)Duncan, N{\"u}sken, and
  Szpruch]{duncan2019geometry}
A.~Duncan, N.~N{\"u}sken, and L.~Szpruch.
\newblock On the geometry of {S}tein variational gradient descent.
\newblock \emph{arXiv preprint arXiv:1912.00894}, 2019.

\bibitem[Durmus et~al.(2019)Durmus, Majewski, and
  Miasojedow]{durmus2019analysis}
A.~Durmus, S.~Majewski, and B.~Miasojedow.
\newblock Analysis of {L}angevin {M}onte {C}arlo via convex optimization.
\newblock \emph{The Journal of Machine Learning Research}, 20\penalty0
  (1):\penalty0 2666--2711, 2019.

\bibitem[Gorbunov et~al.(2020)Gorbunov, Hanzely, and
  Richt{\'a}rik]{gorbunov2020unified}
E.~Gorbunov, F.~Hanzely, and P.~Richt{\'a}rik.
\newblock A unified theory of {SGD}: variance reduction, sampling, quantization
  and coordinate descent.
\newblock In \emph{International Conference on Artificial Intelligence and
  Statistics}, pages 680--690. PMLR, 2020.

\bibitem[Han et~al.(2021)Han, Mishra, Jawanpuria, and Gao]{hanetal21bwgeometry}
A.~Han, B.~Mishra, P.~Jawanpuria, and J.~Gao.
\newblock On {R}iemannian optimization over positive definite matrices with the
  {B}ures--{W}asserstein geometry.
\newblock In A.~Beygelzimer, Y.~Dauphin, P.~Liang, and J.~W. Vaughan, editors,
  \emph{Advances in Neural Information Processing Systems}, 2021.

\bibitem[He et~al.(2022)He, Balasubramanian, Sriperumbudur, and
  Lu]{he2022regularized}
Y.~He, K.~Balasubramanian, B.~K. Sriperumbudur, and J.~Lu.
\newblock Regularized {S}tein variational gradient flow.
\newblock \emph{arXiv preprint arXiv:2211.07861}, 2022.

\bibitem[Honkela and Valpola(2004)]{honkela2004unsupervised}
A.~Honkela and H.~Valpola.
\newblock Unsupervised variational {B}ayesian learning of nonlinear models.
\newblock \emph{Advances in Neural Information Processing Systems}, 17, 2004.

\bibitem[Hu et~al.(2022)Hu, Xiao, Liu, and Toh]{hu2022constraint}
X.~Hu, N.~Xiao, X.~Liu, and K.-C. Toh.
\newblock A constraint dissolving approach for nonsmooth optimization over the
  {S}tiefel manifold.
\newblock \emph{arXiv preprint arXiv:2205.10500}, 2022.

\bibitem[Huang and Wei(2022)]{huang2022riemannian}
W.~Huang and K.~Wei.
\newblock Riemannian proximal gradient methods.
\newblock \emph{Mathematical Programming}, 194\penalty0 (1-2):\penalty0
  371--413, 2022.

\bibitem[Jordan et~al.(1998)Jordan, Kinderlehrer, and
  Otto]{jordan1998variational}
R.~Jordan, D.~Kinderlehrer, and F.~Otto.
\newblock The variational formulation of the {F}okker--{P}lanck equation.
\newblock \emph{SIAM Journal on Mathematical Analysis}, 29\penalty0
  (1):\penalty0 1--17, 1998.

\bibitem[Kasprzak et~al.(2022)Kasprzak, Giordano, and
  Broderick]{kasprzak2022good}
M.~J. Kasprzak, R.~Giordano, and T.~Broderick.
\newblock How good is your {G}aussian approximation of the posterior?
  {F}inite-sample computable error bounds for a variety of useful divergences.
\newblock \emph{arXiv preprint arXiv:2209.14992}, 2022.

\bibitem[Katsevich and Rigollet(2023)]{katsevich2023approximation}
A.~Katsevich and P.~Rigollet.
\newblock On the approximation accuracy of {G}aussian variational inference.
\newblock \emph{arXiv preprint arXiv:2301.02168}, 2023.

\bibitem[Kingma et~al.(2016)Kingma, Salimans, Jozefowicz, Chen, Sutskever, and
  Welling]{kingma2016improved}
D.~P. Kingma, T.~Salimans, R.~Jozefowicz, X.~Chen, I.~Sutskever, and
  M.~Welling.
\newblock Improved variational inference with inverse autoregressive flow.
\newblock \emph{Advances in Neural Information Processing Systems}, 29, 2016.

\bibitem[Knoblauch et~al.(2022)Knoblauch, Jewson, and
  Damoulas]{knoblauch2022optimization}
J.~Knoblauch, J.~Jewson, and T.~Damoulas.
\newblock An optimization-centric view on {B}ayes’ rule: reviewing and
  generalizing variational inference.
\newblock \emph{Journal of Machine Learning Research}, 23\penalty0
  (132):\penalty0 1--109, 2022.

\bibitem[Korba et~al.(2020)Korba, Salim, Arbel, Luise, and
  Gretton]{korba2020non}
A.~Korba, A.~Salim, M.~Arbel, G.~Luise, and A.~Gretton.
\newblock A non-asymptotic analysis for {S}tein variational gradient descent.
\newblock \emph{Advances in Neural Information Processing Systems},
  33:\penalty0 4672--4682, 2020.

\bibitem[Kushner and Yin(2003)]{kushner2003stochastic}
H.~Kushner and G.~Yin.
\newblock Stochastic approximation and recursive algorithms.
\newblock In \emph{Stochastic Modelling and Applied Probability}, volume~35.
  Springer-Verlag NY, 2003.

\bibitem[Lambert et~al.(2022{\natexlab{a}})Lambert, Bonnabel, and
  Bach]{lambert2022recursive}
M.~Lambert, S.~Bonnabel, and F.~Bach.
\newblock The recursive variational {G}aussian approximation ({R-VGA}).
\newblock \emph{Statistics and Computing}, 32\penalty0 (1):\penalty0 10,
  2022{\natexlab{a}}.

\bibitem[Lambert et~al.(2022{\natexlab{b}})Lambert, Chewi, Bach, Bonnabel, and
  Rigollet]{lambert2022variational}
M.~Lambert, S.~Chewi, F.~Bach, S.~Bonnabel, and P.~Rigollet.
\newblock Variational inference via {W}asserstein gradient flows.
\newblock In A.~H. Oh, A.~Agarwal, D.~Belgrave, and K.~Cho, editors,
  \emph{Advances in Neural Information Processing Systems}, 2022{\natexlab{b}}.

\bibitem[Li et~al.(2022)Li, Balasubramanian, and Ma]{li2022stochastic}
J.~Li, K.~Balasubramanian, and S.~Ma.
\newblock Stochastic zeroth-order {R}iemannian derivative estimation and
  optimization.
\newblock \emph{Mathematics of Operations Research}, 2022.

\bibitem[Li et~al.(2021)Li, Chen, Deng, Qu, Zhu, and Man-Cho~So]{li2021weakly}
X.~Li, S.~Chen, Z.~Deng, Q.~Qu, Z.~Zhu, and A.~Man-Cho~So.
\newblock Weakly convex optimization over {S}tiefel manifold using {R}iemannian
  subgradient-type methods.
\newblock \emph{SIAM Journal on Optimization}, 31\penalty0 (3):\penalty0
  1605--1634, 2021.

\bibitem[Lin et~al.(2019)Lin, Khan, and Schmidt]{lin2019fast}
W.~Lin, M.~E. Khan, and M.~Schmidt.
\newblock Fast and simple natural-gradient variational inference with mixture
  of exponential-family approximations.
\newblock In \emph{International Conference on Machine Learning}, pages
  3992--4002. PMLR, 2019.

\bibitem[Lin et~al.(2020)Lin, Schmidt, and Khan]{lin2020handling}
W.~Lin, M.~Schmidt, and M.~E. Khan.
\newblock Handling the positive-definite constraint in the {B}ayesian learning
  rule.
\newblock In \emph{International Conference on Machine Learning}, pages
  6116--6126. PMLR, 2020.

\bibitem[Liu(2017)]{liu2017stein}
Q.~Liu.
\newblock Stein variational gradient descent as gradient flow.
\newblock \emph{Advances in Neural Information Processing Systems}, 30, 2017.

\bibitem[Liu and Wang(2016)]{SVGD}
Q.~Liu and D.~Wang.
\newblock Stein variational gradient descent: a general purpose {B}ayesian
  inference algorithm.
\newblock In D.~Lee, M.~Sugiyama, U.~Luxburg, I.~Guyon, and R.~Garnett,
  editors, \emph{Advances in Neural Information Processing Systems}, volume~29.
  Curran Associates, Inc., 2016.

\bibitem[Lu et~al.(2019)Lu, Lu, and Nolen]{lu2019scaling}
J.~Lu, Y.~Lu, and J.~Nolen.
\newblock Scaling limit of the {S}tein variational gradient descent: the mean
  field regime.
\newblock \emph{SIAM Journal on Mathematical Analysis}, 51\penalty0
  (2):\penalty0 648--671, 2019.

\bibitem[Luo and Trillos(2022)]{luotri22matrixfactorization}
Y.~Luo and N.~G. Trillos.
\newblock Nonconvex matrix factorization is geodesically convex: global
  landscape analysis for fixed-rank matrix optimization from a {R}iemannian
  perspective.
\newblock \emph{arXiv preprint arXiv:2209.15130}, 2022.

\bibitem[Malag\`o et~al.(2018)Malag\`o, Montrucchio, and
  Pistone]{malmonpis2018bw}
L.~Malag\`o, L.~Montrucchio, and G.~Pistone.
\newblock Wasserstein {R}iemannian geometry of {G}aussian densities.
\newblock \emph{Inf. Geom.}, 1\penalty0 (2):\penalty0 137--179, 2018.

\bibitem[Maunu et~al.(2022)Maunu, Le~Gouic, and Rigollet]{maulegrig22bwlowrank}
T.~Maunu, T.~Le~Gouic, and P.~Rigollet.
\newblock Bures--{W}asserstein barycenters and low-rank matrix recovery.
\newblock \emph{arXiv preprint arXiv:2210.14671}, 2022.

\bibitem[Modin(2017)]{modin2017matrixdecomposition}
K.~Modin.
\newblock Geometry of matrix decompositions seen through optimal transport and
  information geometry.
\newblock \emph{J. Geom. Mech.}, 9\penalty0 (3):\penalty0 335--390, 2017.

\bibitem[Naldi and Savar\'{e}(2021)]{naldi2021weak}
E.~Naldi and G.~Savar\'{e}.
\newblock Weak topology and {O}pial property in {W}asserstein spaces, with
  applications to gradient flows and proximal point algorithms of geodesically
  convex functionals.
\newblock \emph{Atti Accad. Naz. Lincei Rend. Lincei Mat. Appl.}, 32\penalty0
  (4):\penalty0 725--750, 2021.

\bibitem[Olkin and Pukelsheim(1982)]{olkpuk1982otgaussian}
I.~Olkin and F.~Pukelsheim.
\newblock The distance between two random vectors with given dispersion
  matrices.
\newblock \emph{Linear Algebra Appl.}, 48:\penalty0 257--263, 1982.

\bibitem[Opper and Archambeau(2009)]{opper2009variational}
M.~Opper and C.~Archambeau.
\newblock The variational {G}aussian approximation revisited.
\newblock \emph{Neural Computation}, 21\penalty0 (3):\penalty0 786--792, 2009.

\bibitem[Otto(2001)]{porous-medium-eqn}
F.~Otto.
\newblock The geometry of dissipative evolution equations: the porous medium
  equation.
\newblock \emph{Comm. Partial Differential Equations}, 26\penalty0
  (1-2):\penalty0 101--174, 2001.

\bibitem[Paisley et~al.(2012)Paisley, Blei, and Jordan]{paisley2012variational}
J.~W. Paisley, D.~M. Blei, and M.~I. Jordan.
\newblock Variational {B}ayesian inference with stochastic search.
\newblock In \emph{Proceedings of the 29th International Conference on Machine
  Learning, {ICML} 2012, Edinburgh, Scotland, UK, June 26 - July 1, 2012}.
  Omnipress, 2012.

\bibitem[Peng et~al.(2022)Peng, Wu, Hu, and Deng]{peng2022riemannian}
Z.~Peng, W.-H. Wu, J.~Hu, and K.-K. Deng.
\newblock Riemannian smoothing gradient type algorithms for nonsmooth
  optimization problem on manifolds.
\newblock \emph{arXiv preprint arXiv:2212.03526}, 2022.

\bibitem[Pleiss et~al.(2020)Pleiss, Jankowiak, Eriksson, Damle, and
  Gardner]{pleiss2020fast}
G.~Pleiss, M.~Jankowiak, D.~Eriksson, A.~Damle, and J.~Gardner.
\newblock {Fast matrix square roots with applications to Gaussian processes and
  Bayesian optimization}.
\newblock \emph{Advances in Neural Information Processing Systems},
  33:\penalty0 22268--22281, 2020.

\bibitem[Quiroz et~al.(2022)Quiroz, Nott, and Kohn]{quiroz2022gaussian}
M.~Quiroz, D.~J. Nott, and R.~Kohn.
\newblock Gaussian variational approximations for high-dimensional state space
  models.
\newblock \emph{Bayesian Analysis}, 1\penalty0 (1):\penalty0 1--28, 2022.

\bibitem[Ranganath et~al.(2014)Ranganath, Gerrish, and
  Blei]{ranganath2014black}
R.~Ranganath, S.~Gerrish, and D.~Blei.
\newblock Black--box variational inference.
\newblock In \emph{Artificial Intelligence and Statistics}, pages 814--822.
  PMLR, 2014.

\bibitem[Rezende and Mohamed(2015)]{rezende2015variational}
D.~Rezende and S.~Mohamed.
\newblock Variational inference with normalizing flows.
\newblock In \emph{International Conference on Machine Learning}, pages
  1530--1538. PMLR, 2015.

\bibitem[Salim et~al.(2020)Salim, Korba, and Luise]{WPGA}
A.~Salim, A.~Korba, and G.~Luise.
\newblock The {W}asserstein proximal gradient algorithm.
\newblock In H.~Larochelle, M.~Ranzato, R.~Hadsell, M.~Balcan, and H.~Lin,
  editors, \emph{Advances in Neural Information Processing Systems}, volume~33,
  pages 12356--12366. Curran Associates, Inc., 2020.

\bibitem[Salim et~al.(2022)Salim, Sun, and Richtarik]{salim2022convergence}
A.~Salim, L.~Sun, and P.~Richtarik.
\newblock A convergence theory for {SVGD} in the population limit under
  {T}alagrand’s inequality {T1}.
\newblock In \emph{International Conference on Machine Learning}, pages
  19139--19152. PMLR, 2022.

\bibitem[Santambrogio(2015)]{santambrogio-opt-applied}
F.~Santambrogio.
\newblock \emph{Optimal transport for applied mathematicians}, volume~87 of
  \emph{Progress in Nonlinear Differential Equations and their Applications}.
\newblock Birkh\"{a}user/Springer, Cham, 2015.
\newblock Calculus of variations, PDEs, and modeling.

\bibitem[Seeger(1999)]{seeger1999bayesian}
M.~Seeger.
\newblock Bayesian model selection for support vector machines, {G}aussian
  processes and other kernel classifiers.
\newblock \emph{Advances in Neural Information Processing Systems}, 12, 1999.

\bibitem[Shi and Mackey(2022)]{shi2022finite}
J.~Shi and L.~Mackey.
\newblock A finite-particle convergence rate for {S}tein variational gradient
  descent.
\newblock \emph{arXiv preprint arXiv:2211.09721}, 2022.

\bibitem[Song et~al.(2022)Song, Sebe, and Wang]{song2022fast}
Y.~Song, N.~Sebe, and W.~Wang.
\newblock Fast differentiable matrix square root and inverse square root.
\newblock \emph{IEEE Transactions on Pattern Analysis and Machine
  Intelligence}, 2022.

\bibitem[Spokoiny(2022)]{spokoiny2022dimension}
V.~Spokoiny.
\newblock Dimension free non-asymptotic bounds on the accuracy of high
  dimensional {L}aplace approximation.
\newblock \emph{arXiv preprint arXiv:2204.11038}, 2022.

\bibitem[Toulis et~al.(2016)Toulis, Tran, and Airoldi]{toulis2016towards}
P.~Toulis, D.~Tran, and E.~Airoldi.
\newblock Towards stability and optimality in stochastic gradient descent.
\newblock In \emph{Artificial Intelligence and Statistics}, pages 1290--1298.
  PMLR, 2016.

\bibitem[Toulis et~al.(2021)Toulis, Horel, and Airoldi]{toulis2021proximal}
P.~Toulis, T.~Horel, and E.~M. Airoldi.
\newblock The proximal {Robbins--Monro} method.
\newblock \emph{Journal of the Royal Statistical Society Series B: Statistical
  Methodology}, 83\penalty0 (1):\penalty0 188--212, 2021.

\bibitem[Van~der Vaart(2000)]{van2000asymptotic}
A.~W. Van~der Vaart.
\newblock \emph{Asymptotic statistics}, volume~3.
\newblock Cambridge University Press, 2000.

\bibitem[Villani(2003)]{villani2003}
C.~Villani.
\newblock \emph{Topics in optimal transportation}, volume~58 of \emph{Graduate
  Studies in Mathematics}.
\newblock American Mathematical Society, Providence, RI, 2003.

\bibitem[Wang et~al.(2022{\natexlab{a}})Wang, Chen, and Li]{wang2022projected}
Y.~Wang, P.~Chen, and W.~Li.
\newblock Projected {W}asserstein gradient descent for high-dimensional
  {B}ayesian inference.
\newblock \emph{SIAM/ASA Journal on Uncertainty Quantification}, 10\penalty0
  (4):\penalty0 1513--1532, 2022{\natexlab{a}}.

\bibitem[Wang et~al.(2022{\natexlab{b}})Wang, Liu, Chen, Ma, Xue, and
  Zhao]{wang2022manifold}
Z.~Wang, B.~Liu, S.~Chen, S.~Ma, L.~Xue, and H.~Zhao.
\newblock A manifold proximal linear method for sparse spectral clustering with
  application to single-cell {RNA} sequencing data analysis.
\newblock \emph{INFORMS Journal on Optimization}, 4\penalty0 (2):\penalty0
  200--214, 2022{\natexlab{b}}.

\bibitem[Wibisono(2018)]{sampling-as-opt}
A.~Wibisono.
\newblock Sampling as optimization in the space of measures: the {L}angevin
  dynamics as a composite optimization problem.
\newblock In S.~Bubeck, V.~Perchet, and P.~Rigollet, editors, \emph{Proceedings
  of the 31st Conference on Learning Theory}, volume~75 of \emph{Proceedings of
  Machine Learning Research}, pages 2093--3027. PMLR, 06--09 Jul 2018.

\bibitem[Yao and Yang(2022)]{yao2022mean}
R.~Yao and Y.~Yang.
\newblock Mean field variational inference via {W}asserstein gradient flow.
\newblock \emph{arXiv preprint arXiv:2207.08074}, 2022.

\bibitem[Zhang et~al.(2021)Zhang, Chen, and Ma]{zhang2021riemannian}
C.~Zhang, X.~Chen, and S.~Ma.
\newblock A {R}iemannian smoothing steepest descent method for non-{L}ipschitz
  optimization on submanifolds.
\newblock \emph{arXiv preprint arXiv:2104.04199}, 2021.

\bibitem[Zhang and Davanloo~Tajbakhsh(2022)]{zhang2022riemannian}
D.~Zhang and S.~Davanloo~Tajbakhsh.
\newblock Riemannian stochastic variance-reduced cubic regularized {N}ewton
  method for submanifold optimization.
\newblock \emph{Journal of Optimization Theory and Applications}, pages 1--38,
  2022.

\bibitem[Zhang et~al.(2018)Zhang, Sun, Duvenaud, and Grosse]{zhang2018noisy}
G.~Zhang, S.~Sun, D.~Duvenaud, and R.~Grosse.
\newblock Noisy natural gradient as variational inference.
\newblock In \emph{International Conference on Machine Learning}, pages
  5852--5861. PMLR, 2018.

\end{thebibliography}

\pagebreak

\appendix

\section{Differential calculus over the BW space}
\label{sec:bw-grad-comp}
In this section, we derive a formula for the BW gradient of a generic functional,
giving us the tools necessary to define the updates of our forward-backward algorithm.
In doing so, we demonstrate computation rules for differentiating a functional $\mF \colon \PG(\RR^d)\to \RR$ along a curve of measures $(\mu_t)_{t\geq 0 } \subseteq \PG(\RR^d)$,
which will be helpful for our proofs of convexity and smoothness inequalities later on.
Our derivation relies on specializing Otto calculus, which deals with the Wasserstein space $\mP_2(\RR^d)$, to the BW space $\PG(\RR^d)$.

\subsection{Background on Otto calculus}

We first give an informal overview of the computation rules of Otto calculus \citep{porous-medium-eqn},
which endows the Wasserstein space $\mP_2(\RR^d)$ with a formal Riemannian structure.
We refer to~\citet{ambrosio2008gradient} for a more rigorous development of the mathematical theory.

Let $\mu$ be an arbitrary element of $\mP_2(\RR^d)$ admitting a density w.r.t.\ Lebesgue measure.
The tangent space $\Tan_\mu \mP_2(\RR^d)$ is identified as the space of gradients of scalar functions on $\RR^d$, i.e.,
\[
\Tan_\mu \mP_2(\RR^d) = \overline{\{\nabla \psi \mid \psi \in \mathcal C_{\rm c}^\infty(\RR^d)\}}^{L^2(\mu)}\,.
\]
For a functional $\mF \colon \mP_2(\RR^d) \to \RR$,
we can formally define its $W_2$ gradient at $\mu$
as the %
mapping $\nabla_{W_2} \mF(\mu) \in \Tan_{\mu} \mP_2(\RR^d)$
satisfying
\begin{align*}
    \partial_t|_{t=0} \mF(\mu_t) = \left\langle \nabla_{W_2} \mF(\mu), v_0 \right\rangle_{\mu}\,,
\end{align*}
for any
sufficiently regular
curve of measures $(\mu_t)_{t\in\RR} \subseteq \mP_2(\RR^d)$
with $\mu_0 = \mu$
and velocity vector fields $(v_t)_{t\in\RR}$ with $v_t \in L^2(\mu_t)$ for a.e.\ $t$ satisfying the continuity
equation
\begin{align}
    \partial_t \mu_t + \Div(\mu_t v_t) = 0\,.
    \label{eqn:continuity}
\end{align}
In fact,
we can compute this $W_2$ gradient via direct identification.
Let $\delta \mF(\mu) \colon \RR^d \to \RR$ denote a \textit{first variation} of $\mF$ at $\mu$~\citep[see][Chapter 7]{santambrogio-opt-applied},
for which
\begin{align*}
    \partial_{t} |_{t=0} \mF(\mu_t) = \int \delta \mF(\mu)\, \partial_t |_{t=0} \mu_t\,.
\end{align*}
Then, by \Cref{eqn:continuity} and integration by parts,
\begin{align*}
    \partial_{t} |_{t=0} \mF(\mu_t) 
    &= \int \delta \mF(\mu)\, \partial_t |_{t=0} \mu_t
    = -\int \delta \mF(\mu) \Div(\mu v_0)
    = \int \left\langle \nabla \delta \mF(\mu), v_0 \right\rangle \dd{\mu}
    = \left\langle \nabla \delta \mF(\mu), v_0 \right\rangle_{\mu}\,.
\end{align*}
Hence, we conclude that 
\begin{align}
    \nabla_{W_2} \mF(\mu) &\equiv \nabla \delta \mF(\mu)\,.
    \label{eqn:w2-first-variation}
\end{align}

Now we turn our attention to the \ac{BW} space.
The \ac{BW} space $\mathsf{BW}(\RR^d)$ is a submanifold of $\mP_2(\RR^d)$ \citep{porous-medium-eqn, lambert2022variational},
and hence inherits the formal Riemannian structure described above.

Let $\mu$ be an arbitrary element of $\mathsf{BW}(\RR^d)$.
The tangent space $\Tan_\mu \PG(\RR^d)$ is identified as the space of affine functions on $\RR^d$ with symmetric linear term, i.e.,
\[
\Tan_\mu \PG(\RR^d) = \{x \mapsto b + S\,(x - m_\mu) \mid b \in \RR^d,\, S\in \mathbf{S}^d\}\,.
\]
In analogy to the above,
for a functional $\mF \colon \mathsf{BW}(\RR^d) \to \RR      $,
we can formally define its \ac{BW} gradient at $\mu$
as the %
element $\nabla_{\mathsf{BW}} \mF(\mu) \in \Tan_{\mu} \mathsf{BW}(\RR^d)$
satisfying
\begin{align*}
    \partial_t|_{t=0} \mF(\mu_t) = \left\langle \nabla_{\mathsf{BW}} \mF(\mu), v_0 \right\rangle_{\mu}\,,
\end{align*}
for any curve of measures $(\mu_t)_{t\in\RR}\subseteq \mathsf{BW}(\RR^d)$
with $\mu_0 = \mu$
and velocity vector fields $(v_t)_{t\in \RR}$, with each $v_t$ an affine map, satisfying
\Cref{eqn:continuity}.
Using \Cref{eqn:w2-first-variation} and integration by parts, we
compute an expression for the BW gradient of $\mF$ in the following subsection.

\subsection{BW gradient calculation}

The BW gradient of a functional $\mF \colon \mathsf{BW}(\RR^d)\to \RR      $ can be derived analogously to~\citet[Section~C.1]{lambert2022variational}.
We present the derivation here for completeness, and in doing so we obtain a formula for the rate of change of $\mF$ along a curve of Gaussians for which the corresponding velocity vector fields are affine maps with linear parts which are not necessarily symmetric; this will play a role in later proofs.
The key idea is to use integration by parts repeatedly, exploiting
the fact that the gradient of a Gaussian density is simply
that same density multiplied by an affine term.

\begin{lemma}\label{lemma:bw-grad-F}
     Let $\mF \colon \mP_2(\RR^d)\to \RR$
     be a functional on the Wasserstein space
     with first variation $\delta \mF$.
     Then, for $\mu \in \PG(\RR^d)$, we have that $\nabla_{\mathsf{BW}} \mF(\mu)$ is given by
     \begin{align*}
         \nabla_{\mathsf{BW}} \mF(\mu) \colon x \mapsto 
         (\EE_{\mu} \nabla^2 \delta \mF)(x - m_\mu) + \EE_\mu \nabla \delta \mF\,.
     \end{align*}
 \end{lemma}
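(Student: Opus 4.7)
The strategy is to leverage the Riemannian submanifold structure: since $\PG(\RR^d)\subset \mP_2(\RR^d)$, I can start from the known Wasserstein gradient $\nabla_{W_2}\mF(\mu)=\nabla\delta\mF(\mu)$ and then orthogonally project (in the $L^2(\mu)$ sense) onto the finite-dimensional tangent space $\Tan_\mu \PG(\RR^d)$ of affine maps with symmetric linear part. Concretely, I would pick a smooth curve $(\mu_t)\subseteq \PG(\RR^d)$ with $\mu_0=\mu$ whose velocity field $v_0(x)=b+S(x-m_\mu)$ at $t=0$ is an arbitrary affine map (for instance, the displacement curve generated by $v_0$ itself, which stays Gaussian whenever $I+tS$ is invertible and $S$ is symmetric). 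Plugging this into the definition of the BW gradient together with \Cref{eqn:w2-first-variation} reduces the task to verifying the identity
\[
\langle \nabla\delta\mF(\mu), v_0\rangle_\mu \;=\; \langle g, v_0\rangle_\mu
\quad\text{for every affine } v_0,
\]
where $g$ is the candidate gradient in the statement; uniqueness of the gradient then closes the argument.

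The main computation is to evaluate $\langle\nabla\delta\mF,v_0\rangle_\mu$ by integration by parts against the Gaussian density. Writing $f\defeq\delta\mF(\mu)$ and splitting $v_0(x)=b+S(x-m_\mu)$, the constant term gives $b^\T\,\EE_\mu\nabla f$ directly. For the linear term I would use Stein's identity for Gaussians, which in the form I need says
\[
\EE_\mu\bigl[(X-m_\mu)\,\nabla f(X)^\T\bigr] \;=\; \Sigma_\mu\,\EE_\mu\nabla^2 f\,,
\]
obtained from $\nabla\log\mu(x)=-\Sigma_\mu^{-1}(x-m_\mu)$ and one integration by parts. This yields
\[
\EE_\mu\bigl[\langle \nabla f(X),\,S(X-m_\mu)\rangle\bigr] \;=\; \tr\bigl(S\,\Sigma_\mu\,\EE_\mu\nabla^2 f\bigr).
\]
On the other side, expanding $\langle g,v_0\rangle_\mu$ and using $\EE_\mu[X-m_\mu]=0$ to kill cross terms plus $\EE_\mu[(X-m_\mu)^\T A(X-m_\mu)]=\tr(A\Sigma_\mu)$ with $A=\EE_\mu\nabla^2 f$ (symmetric) produces exactly $b^\T\EE_\mu\nabla f+\tr(S\Sigma_\mu\EE_\mu\nabla^2 f)$, matching.

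A conceptual point worth flagging: the above matching holds for \emph{every} affine $v_0$, not merely those with symmetric $S$. This is not a contradiction, because the two ``gradients'' $g$ and $\nabla\delta\mF$ are genuinely different $L^2(\mu)$ elements, and only their pairing with affine maps coincides; it is precisely this stronger statement (promised in the surrounding text) that will later be useful when differentiating $\mF$ along curves whose velocity fields have non-symmetric linear parts. The main obstacle I anticipate is a bookkeeping one, namely being careful that the Otto-calculus computation leading to $\nabla_{W_2}\mF=\nabla\delta\mF$ is applicable to the particular curve chosen (so that the continuity equation \eqref{eqn:continuity} holds and the first variation identity can be invoked), and that the integrations by parts are justified under whatever growth/regularity assumptions on $\delta\mF$ are implicit; once these are in place, the identification is just the symmetric-tensor calculation above.
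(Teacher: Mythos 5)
Your proposal is correct and follows essentially the same route as the paper's proof: both identify the BW gradient by pairing $\nabla_{W_2}\mF(\mu)=\nabla\delta\mF(\mu)$ against affine velocity fields and using Gaussian integration by parts (your Stein's identity is exactly the paper's step exploiting $\nabla\mu(x)=-\mu(x)\,\Sigma_\mu^{-1}(x-m_\mu)$), then matching the affine coefficients. Your remark that the pairing identity holds for affine $v_0$ with non-symmetric linear part is also made in the paper and is indeed the form used in later proofs.
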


\begin{proof}
    Let $(\mu_t)_{t \in \RR} \subseteq \mathsf{BW}(\RR^d)$ be a
    regular curve of Gaussians with $\mu_0 = \mu$ and $(v_t)_{t\in\RR}$ be a family of affine maps satisfying \Cref{eqn:continuity}.
    Furthermore, suppose that $v_0$ is given by
    \[v_0 \colon x \mapsto a + M\,(x - m_{\mu})\,, \qquad (a, M) \in \RR^d \times \RR^{d\times d}\,,\]
    and that
    $\nabla_{\mathsf{BW}} \mF(\mu) \in \Tan_\mu \mathsf{BW}(\RR^d)$ is given by
    \[\nabla_{\mathsf{BW}} \mF(\mu) \colon x \mapsto b_\mF + S_\mF\,(x - m_\mu)\,, \qquad (b_\mF, S_\mF) \in \RR^d \times \mathbf{S}^d\,.\]
    Letting $X\sim \mu$, we find that
    \begin{align*}
        \left\langle \nabla_{\mathsf{BW}} \mF(\mu), v_0 \right\rangle_\mu
        &=
        \EE \left\langle b_\mF + S_\mF\,(X - m_{\mu}), a + M\,(X - m_{\mu}) \right\rangle\\
        &= 
        \left\langle b_\mF, a \right\rangle + 
        \EE \left\langle S_\mF\,(X - m_{\mu}), M\,(X - m_{\mu}) \right\rangle\\
        &= 
        \left\langle b_\mF, a \right\rangle + 
        \EE\langle S_\mF, M\,(X - m_{\mu})(X - m_{\mu})^\T \rangle
        \\
        &= 
        \left\langle b_\mF, a \right\rangle + 
        \left\langle S_\mF, M\Sigma_{\mu} \right\rangle
        \\
        &= 
        \left\langle b_\mF, a \right\rangle + 
        \langle S_\mF, \Sigma_{\mu} M^\T \rangle\,.
        \ctag{since $S_\mF = S_\mF^\T$ and $\langle A, B \rangle = \langle A^\T, B^\T \rangle$}
    \end{align*}
    On the other hand,
    from the definition of the $W_2$ gradient, we obtain that
    \begin{align*}
        \partial_t|_{t=0} \mF(\mu_t)
        &=
        \left\langle \nabla_{W_2} \mF(\mu), v_0 \right\rangle_\mu
        \ctag{definition of $\nabla_{W_2} \mF$}
        \\
        &=
        \left\langle \nabla \delta \mF(\mu), v_0 \right\rangle_{\mu}
        \ctag{by \Cref{eqn:w2-first-variation}}
        \\
        &= 
        \EE \left\langle \nabla \delta \mF(X), a+ M\,(X - \EE X) \right\rangle\\
        &= 
        \EE \left\langle \nabla \delta \mF(X), a \right\rangle 
        +
        \EE\langle \Sigma_\mu M^\T \nabla \delta \mF(X), \Sigma_\mu^{-1}\, (X - \EE X)\rangle
        \\
        &= 
        \left\langle \EE \nabla \delta \mF(X), a \right\rangle 
        -\int
        \langle \Sigma_\mu M^\T \nabla \delta \mF, \nabla \mu \rangle
        \ctag{since $\nabla \mu(x) = -\mu(x)\,\Sigma_\mu\,(x - \EE X)$}
        \\
        &= 
        \left\langle \EE \nabla \delta \mF(X), a \right\rangle 
        +
        \EE
        [\Div(\Sigma_\mu M^\T \nabla \delta \mF)(X)]
        \ctag{integration by parts}\\
        &= 
        \left\langle \EE \nabla \delta \mF(X), a \right\rangle 
        +
        \langle \EE_\mu \nabla^2 \delta \mF(X), \Sigma_\mu M^\T \rangle
        \,.
    \end{align*}
    Hence, by direct identification, we conclude that
    \begin{align*}
        (b_\mF, S_\mF) = (\EE_\mu \nabla \delta \mF,\, \EE_\mu \nabla^2 \delta \mF)\,,
    \end{align*}
    proving our desired result.
\end{proof}

\subsection{Examples of BW gradients and stationary condition for Problem~\eqref{eq:original}}
\label{sec:BWgradpot}

Consider the functional $\mF = \mV + \mH$ defined by the sum of the potential (associated to the function $V$) and the entropy, and recall that Problem~\eqref{eq:original} is equivalent to minimizing $\mF$ over the BW space, \textit{i.e.}, solving Problem~\eqref{eq:original2}. Using \citet[Section C.1]{lambert2022variational}, we have the following formulas for the BW gradients of $\mV$ and $\mH$.
\begin{equation}
    \begin{split}
    \nabla_{\BW} \mV(\mu) &: x \mapsto \EE_\mu\nabla V + (\EE_\mu \nabla^2 V)(x-m_\mu)\,, \\
    \nabla_{\BW} \mH(\mu) &: x \mapsto -\Sigma_\mu^{-1}\,(x-m_\mu)\,.
    \end{split}
    \label{eqn:bw-grads}
\end{equation}
We can also derive the above formulas from Lemma~\ref{lemma:bw-grad-F}.

Moreover, by the proof of~\Cref{lemma:bw-grad-F}, we can compute $\partial_t \mF(\mu_t) = \langle \nabla_{\BW}\mF(\mu_t), v_t\rangle_{\mu_t}$ along any curve of Gaussians $(\mu_t)_{t\in\RR}$ and any family of affine maps $(v_t)_{t\in\RR}$ which together satisfy the continuity equation.

In particular, if $\hat \pi$ is a minimizer of~\eqref{eq:original}, the first-order stationary condition $\nabla_{\BW} \mF(\hat{\pi}) = 0$ for Problem~\eqref{eq:original} reads as
\begin{align}\label{eq:firstorderstationarity}
\EE_{\hat \pi} \nabla V = 0~\qquad\text{and}\qquad~\EE_{\hat \pi} \nabla^2 V = \hat\Sigma^{-1}\,,
\end{align}
where $ \hat\Sigma$ is the covariance matrix corresponding to the distribution $\hat \pi$.

\section{Convexity and smoothness inequalities in the \ac{BW} space for the potential and the entropy}
\label{section:gg-ineqs-proof}

Having derived a formula for the BW gradient of a generic functional $\mF \colon \PG(\RR^d)\to \RR$
in \Cref{sec:bw-grad-comp},
we may now proceed to prove
\Cref{lem:smooth} (for the potential) and \Cref{lem:cvx} (for the entropy).

For both lemmas, the key idea is to differentiate a functional $\mF \colon \PG(\RR^d) \to \RR$ along a curve
$(\mu_t)_{t\in [0, 1]}$ with velocity vector fields $(v_t)_{t \in [0, 1]}$ satisfying the continuity equation \eqref{eqn:continuity},
utilizing our calculation rules laid out in \Cref{sec:bw-grad-comp}.
In particular,
we will use that
\begin{align}
    \mF(\mu_1)
    - \mF(\mu_0)
    &=
    \int_0^1 \partial_t \mF(\mu_t) \dd{t}
    \nonumber\\
    &= 
    \partial_t |_{t=0} \mF(\mu_t) + \int_0^1 \int_0^t \partial^2_s F(\mu_s) \dd{s} \dd{t}
    \nonumber\\
    &=
    \left\langle \nabla_{\mathsf{BW}} \mF(\mu_0), v_0 \right\rangle_{\mu_0} + \int_0^1 (1 - t)\, \partial^2_t \mF(\mu_t) \dd{t},
    \label{eqn:2nd-order-expansion-pre}
\end{align}
for both the entropy and the potential.

\subsection{Proof of \Cref{lem:smooth}}
\label{sec:proof-lem-pot}
We prove the following result for the potential. This result is stronger than \Cref{lem:smooth}, and will be useful in our subsequent analysis.
\begin{lemma}\label{lemma:gg-ineqs-V}
    Suppose that $\alpha I \preceq \nabla^2 V \preceq \beta I$. Let $\mu \in \PG(\RR^d)$ and let $h \colon \RR^d \to \RR^d$ be an affine map. Then the following inequalities hold:
    \begin{align*}
        \mV((\id + h)_\# \mu)
        - \mV(\mu)
        &\geq
        \left\langle \nabla_{\mathsf{BW}} \mV(\mu), h \right\rangle_{\mu}
        +
        \frac{\alpha}{2}\, \norm{h}_{\mu}^2\,, \\
        \mV((\id + h)_\# \mu)
        - \mV(\mu)
        &\leq
        \left\langle \nabla_{\mathsf{BW}} \mV(\mu), h \right\rangle_{\mu}
        +
        \frac{\beta}{2}\, \norm{h}_{\mu}^2\,.
    \end{align*}
\end{lemma}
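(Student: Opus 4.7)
The plan is to parameterize $\mV((\id + h)_\# \mu) - \mV(\mu)$ as the endpoint value of a one-dimensional function along the curve of pushforwards, and then apply a Taylor expansion with integral remainder, exploiting the pointwise bounds on $\nabla^2 V$.

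More concretely, let $X \sim \mu$ and define the scalar function
$$g(t) \defeq \mV\bigl((\id + t h)_\# \mu\bigr) = \EE\, V\bigl(X + t\, h(X)\bigr)\,, \qquad t \in [0,1]\,.$$
Then $g$ is $C^2$, with derivatives obtained by differentiating under the expectation:
$$g'(t) = \EE \langle \nabla V(X + t h(X)), h(X)\rangle\,, \qquad g''(t) = \EE \bigl\langle h(X), \nabla^2 V(X + t h(X))\, h(X)\bigr\rangle\,.$$
From $\alpha I \preceq \nabla^2 V \preceq \beta I$ we immediately get the pointwise bracket
$$\alpha\, \|h\|_\mu^2 \;\leq\; g''(t) \;\leq\; \beta\, \|h\|_\mu^2 \qquad \text{for all } t \in [0,1]\,.$$
Plugging into the second-order Taylor formula with integral remainder,
$$g(1) - g(0) = g'(0) + \int_0^1 (1-t)\, g''(t)\, \dd t\,,$$
and bounding the remainder above and below by $\tfrac{\beta}{2}\|h\|_\mu^2$ and $\tfrac{\alpha}{2}\|h\|_\mu^2$ respectively yields the two inequalities, provided we can identify $g'(0)$ with $\langle \nabla_{\BW} \mV(\mu), h\rangle_\mu$.

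The only nontrivial step is therefore this identification of $g'(0)$. The curve $\mu_t = (\id + th)_\# \mu$ has velocity field $v_0 = h$ at $t=0$ (since $(\id + th)$ is the flow with initial velocity $h$), which is an affine map but not necessarily with symmetric linear part. Fortunately, the derivation in \Cref{lemma:bw-grad-F} is carried out for arbitrary affine velocity fields $v_0(x) = a + M(x - m_\mu)$ with $M \in \RR^{d \times d}$ not assumed symmetric, and shows precisely that
$$\partial_t|_{t=0}\, \mF(\mu_t) = \langle \nabla_{\BW} \mF(\mu), v_0\rangle_\mu$$
with $\nabla_{\BW}\mF(\mu)$ as given by the formula of that lemma (where, for $\mF = \mV$, the first variation is $V$ itself). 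Applying this with $\mF = \mV$ and $v_0 = h$ gives exactly $g'(0) = \langle \nabla_{\BW}\mV(\mu), h\rangle_\mu$, closing the argument.

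The main obstacle I anticipate is not the Taylor-style estimate, which is routine once $g''$ is bounded, but rather being careful that the affine map $h$ may have a non-symmetric linear part, whereas the BW gradient lives in the subspace of affine maps with symmetric linear part. This is exactly where the Stein-style integration by parts in the proof of \Cref{lemma:bw-grad-F} is essential: it symmetrizes the linear part of $h$ against the Gaussian $\mu$ at no cost, because only the combination $\Sigma_\mu M^\T$ (paired against the symmetric matrix $\EE_\mu \nabla^2 V$) enters the inner product. Once this is invoked, no separate calculation is needed.
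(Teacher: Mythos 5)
Your proposal is correct and essentially the same as the paper's proof: the paper applies the pointwise bound $\alpha I \preceq \nabla^2 V \preceq \beta I$ to $V(X+h(X))-V(X)$ and takes expectations, which is just your Taylor-with-integral-remainder argument for $g(t)=\mV((\id+th)_\#\mu)$ carried out before averaging rather than after, and both arguments hinge on the same identification $\EE\langle \nabla V(X), h(X)\rangle = \langle \nabla_{\BW}\mV(\mu), h\rangle_\mu$ for affine $h$ with possibly non-symmetric linear part, justified by the Gaussian integration by parts underlying \Cref{lemma:bw-grad-F}.
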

\begin{proof}
    Let $X \sim \mu$. Note that regardless of $\mu$, we have that $\delta \mV(\mu) = V$.
    Hence, $\nabla_{W_2} \mV(\mu) = \nabla V$.
    We thus compute that
    \begin{align*}
        \mV((\id + h)_\# \mu)
        - \mV(\mu)
        &=
        \EE[V(X + h(X)) - V(X)]\\
        &\geq
        \EE\bigl[
        \left\langle \nabla V(X), h(X) \right\rangle
        + \frac{\alpha}{2}\,\norm{h(X)}^2
        \bigr]
        \ctag{since $\nabla^2 V \succeq \alpha I$}\\
        &=
        \left\langle \nabla_{\mathsf{W_2}} \mV(\mu), h \right\rangle_{\mu}
        +
        \frac{\alpha}{2}\, \norm{h}_{\mu}^2
        \\
        &=
        \left\langle \nabla_{\mathsf{BW}} \mV(\mu), h \right\rangle_{\mu}
        +
        \frac{\alpha}{2}\, \norm{h}_{\mu}^2\,,
    \end{align*}
    proving the first inequality. The second inequality follows similarly, using the fact that $\nabla^2 V \preceq \beta I$.
\end{proof}

\Cref{lem:smooth} then follows as a corollary of the above lemma.

\begin{proof}[Proof of \Cref{lem:smooth}]
    Note that if $V$ is $\beta$-smooth, then
    we have by definition that $-\beta I \preceq \nabla^2 V \preceq \beta I$.
    Hence,
    applying \Cref{lemma:gg-ineqs-V} with $\alpha = -\beta$,
    we obtain that
    \begin{align*}
        \bigl\lvert \mV((\id + h)_\# \mu)
        - \mV(\mu)
        -
        \left\langle \nabla_{\mathsf{BW}} \mV(\mu), h \right\rangle_{\mu}
        \bigr\rvert
        \leq
        \frac{\beta}{2}\, \norm{h}_{\mu}^2.
    \end{align*}
    Moreover, we have shown in \Cref{sec:BWgradpot}
    that $\nabla_{\BW} \mV(\mu)$ is given by
    \[
    \nabla_{\mathsf{BW}}\mV(\mu): x \mapsto \EE_\mu \nabla V + (\EE_\mu \nabla^2 V)(x - m_{\mu})\,,
    \]
    completing the proof of our desired result.
\end{proof}

\subsection{Proof of \Cref{lem:cvx}}
\label{sec:proof-lem-ent}

For the entropy, we follow the same strategy as in the previous proof, differentiating the
entropy $\mH$ along a particular curve.
This time, we will differentiate along the \textit{generalized geodesic}
$(\mu_t^\nu)_{t\in [0, 1]} \subseteq \mathsf{BW}(\RR^d)$, which we define as follows:

Let 
$T_0, T_1$
be the optimal transport maps
for which $T_0 - \id, T_1 - \id \in T_\nu \mathsf{BW}(\RR^d)$
and $(T_0)_\# \nu = \mu_0$ and $(T_1)_\# \nu = \mu_1$, respectively.
Defining $T_t \defeq (1 - t)\, T_0 + t\, T_1$, the generalized geodesic with basepoint $\nu$ and endpoints $\mu_0, \mu_1$ is then the curve of measures $(\mu_t^\nu)_{t\in [0, 1]} \subseteq \mathsf{BW}(\RR^d)$
with
$
\mu_t^\nu = (T_t)_\# \nu
$.
We note that $\mu_0^\nu = \mu_0$ and $\mu_1^\nu = \mu_1$, and that $(\mu_t^\nu)_{t\in [0, 1]}$ solves the continuity equation
\begin{align*}
    \partial_t \mu_t^\nu + \Div(\mu_t^\nu v_t) = 0, \qquad \text{where} \; v_t = (T_1 - T_0) \circ T_t^{-1}\,.
\end{align*}

Generalized geodesics were used in~\citet{ambrosio2008gradient} to study gradient flows in the Wasserstein space, and have since been useful for various applications of this theory, e.g.,~\citet{gd-bw-barycenters, ahnchewi2021mirrorlangevin, averaging-on-bw}.

\begin{proof}[Proof of \Cref{lem:cvx}]
    The JKO operator of $\mH$ over the Wasserstein space $\mP_2(\RR^d)$ is derived in \citet[Example~7]{sampling-as-opt} for a Gaussian measure $\mu = \mN(\mu, \Sigma)$,
    and takes the form
    $\mu' = \mN(\mu, \Sigma_1)$ where
    $\Sigma_1$ is defined in the same manner as \Cref{eq:BWjko}.
    Since $\mu'$ is also an element of $\PG(\RR^d)$, we conclude that $\mu'$ is also the result of applying the BW JKO operator to $\mu$, proving our desired closed form.

    Now we demonstrate the desired generalized geodesic convexity inequality for the entropy. In fact, this claim follows from general results on the Wasserstein space~\citep[see, e.g.,][\S 9.4]{ambrosio2008gradient}, but we give a proof here for completeness.
    As mentioned above, to do so we will differentiate $\mH$ along the generalized geodesic $(\mu_t^\nu)_{t\in [0, 1]}$ defined above.
    Abusing notation, we identify a distribution $\mu$ with its density with respect to Lebesgue measure.
    We then have that
    \begin{align*}
        \partial_t^2 \mH(\mu_t^\nu)
        &=
        \partial_t^2 \int \mu_t^\nu \ln \mu_t^\nu
        = 
        \partial_t^2 \int \nu \ln (\mu_t^\nu \circ T_t)
        = 
        \partial_t^2 \int \nu \ln \frac{\nu}{ \det \nabla T_t}
        \ctag{since $(T_t)_\# \nu = \mu_t^\nu$, change of variable}
        \\
        &= 
        -\int ( \partial_t^2 \ln \det \nabla T_t )\dd{\nu}
        = 
        -\int \partial_t \left\langle [\nabla T_t]^{-1}, \partial_t \nabla T_t \right\rangle \dd{\nu }
        \\
        &
        = 
        -\int \partial_t \left\langle [\nabla T_t]^{-1}, \nabla T_1 - \nabla T_0 \right\rangle\dd{\nu }
        = 
        \int \left\langle [\nabla T_t]^{-2}, (\nabla T_1 - \nabla T_0)^2 \right\rangle \dd{\nu}\\
        &= 
        \left\langle [\nabla T_t]^{-2}, (\nabla T_1 - \nabla T_0)^2 \right\rangle\,,
    \end{align*}
    where the last line follows since $T_t$ is an affine map, meaning that
    $\nabla T_t$ is constant on $\RR^d$.
    In addition, by Brenier's theorem~\citep[Theorem 2.12]{villani2003}, $T_t$ is the gradient of a convex function
    for all $t\in [0, 1]$.
    Hence, we know that $\nabla T_t \succeq 0$ for all $t\in [0, 1]$, meaning that
        $
        \left\langle [\nabla T_t]^{-2}, (\nabla T_1 - \nabla T_0)^2 \right\rangle
        \geq 
        0.
        $
    Hence, using \Cref{eqn:2nd-order-expansion-pre} applied to $\mH$, we obtain that
    \begin{align*}
        \mH(\mu_1) - \mH(\mu_0)
        &=
        \left\langle \nabla_{\mathsf{BW}} \mH(\mu_0) \circ T_0, T_1 - T_0 \right\rangle_\nu + \int_0^1 
        (1 - t)\,
        \left\langle [\nabla T_t]^{-2}, (\nabla T_1 - \nabla T_0)^2 \right\rangle
        \dd{t}\\
        &\geq
        \left\langle \nabla_{\mathsf{BW}} \mH(\mu_0) \circ T_0, T_1 - T_0 \right\rangle_\nu\,.
    \end{align*}
    This proves the desired inequality for the entropy, and we conclude our proof.
\end{proof}

\begin{remark}
    In fact, we can show a \textit{strong convexity} inequality for the entropy
    along generalized geodesics connecting distributions $\mu_0, \mu_1 \in \mathsf{BW}(\RR^d)$
    with the same mean.
    Let $m_0, m_1$ be the means of $\mu_0, \mu_1$ respectively,
    and suppose that $\Sigma_{\mu_0}, \Sigma_{\mu_1} \preceq \lambda I$.
    We compute that
    \begin{align*}
        \left\langle [\nabla T_t]^{-2}, (\nabla T_1 - \nabla T_0)^2 \right\rangle
        &=
        \left\langle I, [\nabla T_t]^{-1}\,(\nabla T_1 - \nabla T_0)^2\,[\nabla T_t]^{-1} \right\rangle
        \\
        &\geq
        \frac{1}{\norm{\Sigma_{\mu_t^\nu}}_{\mathrm{op}}}\,
        \left\langle \Sigma_{\mu_t}^\nu, [\nabla T_t]^{-1}\,(\nabla T_1 - \nabla T_0)^2\,[\nabla T_t]^{-1} \right\rangle
        \\
        &=
        \frac{1}{\norm{\Sigma_{\mu_t^\nu}}_{\mathrm{op}}}\,
        \left\langle [\nabla T_t]^{-1} \,\Sigma_{\mu_t^\nu}\,[\nabla T_t]^{-1}, (\nabla T_1 - \nabla T_0)^2 \right\rangle
        \\
        &=
        \frac{1}{\norm{\Sigma_{\mu_t^\nu}}_{\mathrm{op}}}\,
        \left\langle \Sigma_\nu, (\nabla T_1 - \nabla T_0)^2 \right\rangle.
    \end{align*}
        
    Since $T_0$ is an affine map, we know that $T_{0} (x) - (\nabla T_0)\, x$ is a constant for all $x\in \RR^d$, and similarly for $T_1$. Hence, we find that if $Y \sim \nu$, then
    \begin{align}
        \norm{T_1 - T_0}_\nu^2
        &=
        \Tr(\Cov_{\nu}[T_1 - T_0, T_1 - T_0])
        +
        \norm{\EE_{\nu} [T_1 - T_0]}^2
        \ctag{by bias-variance decomposition}
        \nonumber
        \\
        &=
        \Tr(\Cov[(\nabla T_1 - \nabla T_0)(Y), (\nabla T_1 - \nabla T_0)(Y)])
        +
        \norm{m_1 - m_0}^2
        \ctag{since $T_0, T_1$ are affine}
        \nonumber
        \\
        &=
        \left\langle \Sigma_\nu, (\nabla T_1 - \nabla T_0)^2 \right\rangle
        +
        \norm{m_1 - m_0}^2\,.
        \label{eqn:bias-variance}
    \end{align}
    In addition, from \citet[Lemma 10]{gd-bw-barycenters}, we know that the operator norm of
    the covariance matrix is convex along generalized geodesics in $\mathsf{BW}(\RR^d)$,
    implying that
    $\Sigma_{\mu_t^\nu} \preceq \lambda I$ for all $t\in [0, 1]$.
    Thus, we obtain
    \begin{align*}
        \left\langle [\nabla T_t]^{-2}, (\nabla T_1 - \nabla T_0)^2 \right\rangle
        &\leq
        \frac{1}{\norm{\Sigma_{\mu_t^\nu}}_{\mathrm{op}}}\,
        \left\langle \Sigma_\nu, (\nabla T_1 - \nabla T_0)^2 \right\rangle
        \\
        &= 
        \frac{1}{\norm{\Sigma_{\mu_t^\nu}}_{\mathrm{op}}}\,
        \bigl( 
        \norm{T_1 - T_0}_\nu^2
        -
        \norm{m_1 - m_0}^2
        \bigr)
        \ctag{by \Cref{eqn:bias-variance}}
        \\
        &\geq
        \frac{1}{\lambda}\,
        \bigl( 
        \norm{T_1 - T_0}_\nu^2
        -
        \norm{m_1 - m_0}^2
        \bigr)
        \ctag{by \citet[Lemma 10]{gd-bw-barycenters}}\,.
    \end{align*}
    Hence, using \Cref{eqn:2nd-order-expansion-pre} applied to $\mH$, we obtain that
    \begin{align*}
        \mH(\mu_1) - \mH(\mu_0)
        &=
        \left\langle \nabla_{\mathsf{BW}} \mH(\mu_0) \circ T_0, T_1 - T_0 \right\rangle_\nu + \int_0^1 
        (1 - t)\,
        \left\langle [\nabla T_t]^{-2}, (\nabla T_1 - \nabla T_0)^2 \right\rangle
        \dd{t}\\
        &\geq
        \left\langle \nabla_{\mathsf{BW}} \mH(\mu_0) \circ T_0, T_1 - T_0 \right\rangle_\nu 
        + \frac{1}{2\lambda}\, \bigl( \norm{T_1 - T_0}_\nu^2 - \norm{m_1 - m_0}^2 \bigr)\,.
    \end{align*}
    This implies that the entropy is strongly convex along generalized geodesics
    between two Gaussians $\mu_0, \mu_1 \in \mathsf{BW}(\RR^d)$ with the same mean.
    Similarly, the same computation
    can be used to show a \textit{smoothness} inequality for the
    entropy along \textit{geodesics}.
    As before, we compute that
    \begin{align*}
        \left\langle [\nabla T_t]^{-2}, (\nabla T_1 - \nabla T_0)^2 \right\rangle
        &=
        \left\langle I, [\nabla T_t]^{-1}\,(\nabla T_1 - \nabla T_0)^2\,[\nabla T_t]^{-1} \right\rangle
        \\
        &\leq
        \frac{1}{\lambda_{\min}(\Sigma_{\mu_t^\nu})}\,
        \left\langle \Sigma_{\mu_t}^\nu, [\nabla T_t]^{-1}\,(\nabla T_1 - \nabla T_0)^2\,[\nabla T_t]^{-1} \right\rangle
        \\
        &=
        \frac{1}{\lambda_{\min}(\Sigma_{\mu_t^\nu})}\,
        \left\langle [\nabla T_t]^{-1}\, \Sigma_{\mu_t^\nu}\,[\nabla T_t]^{-1}, (\nabla T_1 - \nabla T_0)^2 \right\rangle
        \\
        &=
        \frac{1}{\lambda_{\min}(\Sigma_{\mu_t^\nu})}\,
        \left\langle \Sigma_\nu, (\nabla T_1 - \nabla T_0)^2 \right\rangle
        \\
        &= 
        \frac{1}{\lambda_{\min}(\Sigma_{\mu_t^\nu})}\,
        \bigl( 
        \norm{T_1 - T_0}_\nu^2
        -
        \norm{m_1 - m_0}^2
        \bigr)
        \\
        &\leq
        \frac{1}{\lambda_{\min}(\Sigma_{\mu_t^\nu})}\,
        \norm{T_1 - T_0}_\nu^2\,.
    \end{align*}
    Once again using \Cref{eqn:2nd-order-expansion-pre} applied to $\mH$, we obtain that
    \begin{align}
        \mH(\mu_1) - \mH(\mu_0)
        &=
        \left\langle \nabla_{\mathsf{BW}} \mH(\mu_0) \circ T_0, T_1 - T_0 \right\rangle_\nu + \int_0^1 
        (1 - t)\,
        \left\langle [\nabla T_t]^{-2}, (\nabla T_1 - \nabla T_0)^2 \right\rangle
        \dd{t}
        \nonumber
        \\
        &\leq
        \left\langle \nabla_{\mathsf{BW}} \mH(\mu_0) \circ T_0, T_1 - T_0 \right\rangle_\nu + \int_0^1 
        \frac{1 - t}{\lambda_{\min}(\Sigma_{\mu_t^\nu})}\,
        \norm{T_1 - T_0}_\nu^2
        \dd{t}
        .
        \label[Inequality]{ineq:entropy-smoothness}
    \end{align}
\end{remark}

As a corollary of \Cref{ineq:entropy-smoothness}, we obtain a smoothness inequality for the entropy along geodesics, which will be useful for our subsequent analysis.

\begin{lemma}[Smoothness of entropy along geodesics]
    \label{lemma:entropy-smoothness}
    Suppose that $\mu_0, \mu_1 \in \PG(\RR^d)$
    satisfy $\Sigma_{\mu_0}^{-1}, \Sigma_{\mu_1}^{-1} \preceq \gamma I$.
    Then if $T$ is the optimal transport map from $\mu_0$ to $\mu_1$, we have that
    \begin{align*}
        \mH(\mu_1) - \mH(\mu_0)
        &\leq
        \left\langle \nabla_{\BW} \mH(\mu_0), T - \id \right\rangle_{\mu_0} + \frac{\gamma}{2}\, \norm{T - \id}_{\mu_0}^2\,.
    \end{align*}
\end{lemma}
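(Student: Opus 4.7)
The plan is to derive this lemma as a corollary of \Cref{ineq:entropy-smoothness} from the preceding remark, specialized to the basepoint $\nu = \mu_0$ in the generalized-geodesic setup. With $\nu = \mu_0$, the optimal transport map from $\mu_0$ to itself is $T_0 = \id$ while $T_1 = T$ is the given optimal map from $\mu_0$ to $\mu_1$, so the generalized geodesic reduces to the standard geodesic $\mu_t = ((1-t)\id + tT)_\# \mu_0$. Substituting into \Cref{ineq:entropy-smoothness} yields
\[
\mH(\mu_1) - \mH(\mu_0) \le \langle \nabla_{\BW}\mH(\mu_0),\, T - \id\rangle_{\mu_0} + \|T-\id\|_{\mu_0}^2\int_0^1 \frac{1-t}{\lambda_{\min}(\Sigma_{\mu_t})}\,\dd t\,,
\]
and since $\int_0^1 (1-t)\,\dd t = 1/2$, the entire claim will follow once I establish the uniform bound $\lambda_{\min}(\Sigma_{\mu_t}) \ge 1/\gamma$ for all $t\in[0,1]$, i.e., $\Sigma_{\mu_t}\succeq \gamma^{-1} I$.

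To prove this spectral bound, recall that $T(x) = A(x-m_{\mu_0}) + m_{\mu_1}$ with $A\succeq 0$ symmetric satisfying $A\Sigma_{\mu_0}A = \Sigma_{\mu_1}$, so $\Sigma_{\mu_t} = B_t \Sigma_{\mu_0} B_t$ with $B_t \defeq (1-t)I + tA$. The hypotheses $\Sigma_{\mu_0}^{-1},\Sigma_{\mu_1}^{-1}\preceq \gamma I$ translate to $\Sigma_{\mu_0} \succeq \gamma^{-1}I$ and, after conjugating $\Sigma_{\mu_1}\succeq \gamma^{-1}I$ by $A^{-1}$, also $\Sigma_{\mu_0} \succeq \gamma^{-1}A^{-2}$. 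I will strengthen these to the combined bound $\Sigma_{\mu_0} \succeq \gamma^{-1}B_t^{-2}$, from which conjugating both sides by $B_t \succeq 0$ (which preserves L\"owner order) immediately yields $\Sigma_{\mu_t} \succeq \gamma^{-1}I$. It thus suffices to prove the quadratic-form inequality $v^\T B_t^{-2} v \le \max(v^\T v,\, v^\T A^{-2} v)$ for every $v\in\RR^d$, since then $v^\T\Sigma_{\mu_0}v \ge \gamma^{-1}\max(v^\T v,\, v^\T A^{-2} v) \ge \gamma^{-1}v^\T B_t^{-2} v$.

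For this final bound I exploit that $B_t$, being a polynomial in $A$, commutes with $A$: hence $B_t^{-2}$, $A^{-2}$, and $I$ are simultaneously diagonal in the eigenbasis of $A$. Decomposing $v$ in that eigenbasis with coordinates $w_i$ and letting $\lambda_i > 0$ denote the eigenvalues of $A$, I obtain $v^\T B_t^{-2} v = \sum_i w_i^2/((1-t)+t\lambda_i)^2$. Each scalar function $t\mapsto 1/((1-t)+t\lambda_i)^2$ is convex on $[0,1]$ (its second derivative equals $6(\lambda_i-1)^2/((1-t)+t\lambda_i)^4 \ge 0$), so the positive-weighted sum is convex in $t$ and therefore bounded on $[0,1]$ by the larger of its endpoint values $\sum_i w_i^2 = v^\T v$ and $\sum_i w_i^2/\lambda_i^2 = v^\T A^{-2} v$. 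The main obstacle is precisely this step: since $M\mapsto M^{-2}$ is not operator convex in general, a direct matrix-convex interpolation is unavailable, and the commutativity of $B_t$ with $A$ is what reduces the problem to a univariate convexity check along each eigendirection of $A$.
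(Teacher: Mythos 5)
Your proposal is correct, and its skeleton coincides with the paper's: both specialize \Cref{ineq:entropy-smoothness} to the basepoint $\nu = \mu_0$ (so $T_0 = \id$, $T_1 = T$, and the generalized geodesic is the geodesic from $\mu_0$ to $\mu_1$), after which everything hinges on the uniform bound $\lambda_{\min}(\Sigma_{\mu_t}) \ge \gamma^{-1}$. Where you diverge is in how that spectral bound is justified. The paper simply invokes the concavity of $\lambda_{\min}$ along (generalized) geodesics in the \ac{BW} space, citing \citet[Appendix B]{averaging-on-bw}, which immediately gives $\lambda_{\min}(\Sigma_{\mu_t}) \ge (1-t)\,\lambda_{\min}(\Sigma_{\mu_0}) + t\,\lambda_{\min}(\Sigma_{\mu_1}) \ge \gamma^{-1}$. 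You instead prove the weaker but sufficient quasi-concavity statement $\Sigma_{\mu_t} \succeq \gamma^{-1} I$ from scratch: writing $\Sigma_{\mu_t} = B_t \Sigma_{\mu_0} B_t$ with $B_t = (1-t)\,I + tA$ and $A\Sigma_{\mu_0}A = \Sigma_{\mu_1}$, reducing via congruence to $\Sigma_{\mu_0} \succeq \gamma^{-1} B_t^{-2}$, and establishing $v^\T B_t^{-2} v \le \max(v^\T v,\, v^\T A^{-2} v)$ by scalar convexity of $t \mapsto ((1-t)+t\lambda_i)^{-2}$ in the eigenbasis of $A$. This argument is sound (your observation that $\Sigma_{\mu_0}$ need not commute with $A$, so one must work with the quadratic form of $B_t^{-2}$ rather than attempt an operator-convex interpolation of $M \mapsto M^{-2}$, is exactly the right caution, and the endpoint bound follows correctly from convexity of the full sum in $t$, not from per-eigenvalue maxima); the only cosmetic point is to note explicitly that $A \succ 0$ (hence $B_t \succ 0$) because $\Sigma_{\mu_1} = A\Sigma_{\mu_0}A$ is nonsingular and $A \succeq 0$. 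The trade-off: the paper's route is shorter and rests on a stronger known fact about \ac{BW} geometry, while yours is self-contained and elementary, at the cost of a page of linear algebra.
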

\begin{proof}
    We apply
    \Cref{ineq:entropy-smoothness}
    with 
    $\nu = \mu_0$, noting in this case that $T_1 = T$ and $T_0 = \id$, and that $(\mu_t^\nu)_{t\in [0, 1]}$ is precisely the constant-speed geodesic $(\mu_t)_{t\in [0, 1]}$ connecting $\mu_0, \mu_1$.
    Furthermore, by \citet[Appendix B]{averaging-on-bw},
    we know that $\lambda_{\min}$ is concave along geodesics, so $\lambda_{\min}(\Sigma_{\mu_t}) \geq \gamma^{-1} I$ for all $t$.
    Hence,
    we obtain
    \begin{align*}
        \mH(\mu_1) - \mH(\mu_0)
        &\leq
        \left\langle \nabla_{\mathsf{BW}} \mH(\mu_0), T - \id \right\rangle_{\mu_0} + \int_0^1 
        \frac{1 - t}{\lambda_{\min}(\Sigma_{\mu_t})}\,
        \norm{T - \id}_{\mu_0}^2
        \dd{t}
        \\
        &\leq
        \left\langle \nabla_{\mathsf{BW}} \mH(\mu_0), T - \id \right\rangle_{\mu_0} + \int_0^1 
        \frac{1 - t}{\gamma^{-1}}\,
        \norm{T - \id}_{\mu_0}^2
        \dd{t}
        \\
        &= 
        \left\langle \nabla_{\mathsf{BW}} \mH(\mu_0), T - \id \right\rangle_{\mu_0} 
        +
        \frac{\gamma}{2}\, \norm{T - \id}_{\mu_0}^2\,,
    \end{align*}
    proving the desired result.
\end{proof}

\section{Proof of the one-step inequality (\Cref{lemma:discrete-evi})}\label{section:discrete-evi-proof}

The key idea of this proof is 
to decompose the difference 
$
\mF(\iterate_{k+1}) - \mF(\nu)
$
as the sum of three terms,
\begin{align*}
    \mF(\iterate_{k+1}) - \mF(\nu) 
    &= 
    [\mV(\iterate_{k+1}) - \mV(\iterate_{k+\half})]+
    [\mV(\iterate_{k+\half}) - \mV(\nu)]+
    [\mH(\iterate_{k+1}) - \mH(\nu)]\,,
\end{align*}
where each individual term may be controlled using the inequalities in \Cref{lemma:gg-ineqs-V,lem:cvx}.
Recalling that \Cref{lem:cvx} applies only to \textit{generalized geodesics},
we must take care in defining couplings between $\iterate_{k}, \iterate_{k+\half}, \iterate_{k+1}$ and $\nu$.
We detail the argument in the following proof.

\begin{proof}[Proof of \Cref{lemma:discrete-evi}]
    Recall from Section~\ref{sec:convergence} that we defined $\SF_k$ as the $\sigma$-algebra generated up to iteration $k$ (but not including the random sample $\hat{X}_k \sim p_k$ in Stochastic FB--GVI)). We also have 
    \begin{align*}
        e_k \colon x \mapsto (S_k - \EE_{\iterate_k} \nabla^2 V)(x - m_k) + (b_k - \EE_{\iterate_k} \nabla V)
    \end{align*}
    to be defined as the (random) error of the gradient estimate at iteration $k$ of (stochastic) FB--GVI, for which $\EE[e_k \mid \SF_k] =0$.
    Conditioned on the filtration $\SF_k$, we construct the following random variables $X_k, X_{k+\half}, X_{k+1}, Y_\mV$ and $Y_\mH$.

    Let $(X_k, Y_\mV)\sim (\iterate_k, \nu)$ be optimally coupled for the $W_2$ distance,
    and let $(X_k, Y_\mV) \indep e_k$.
    Since $\stepsize \leq \frac{1}{\beta}$ by assumption,
    we have that
    \[
    I - \stepsize S_k \succeq (1-\stepsize\beta)\, I\succeq 0\,.
    \]
    Recall that by Brenier's theorem~\citep[Theorem 2.12]{villani2003}, if $Y = \nabla \varphi(X)$ for a convex, proper, and lower-semicontinuous function $\varphi : \RR^d\to\RR \cup \{\infty\}$, then $(X,Y)$ is an optimal coupling for the $2$-Wasserstein distance.
    The condition $I- \stepsize S_k \succeq 0$ above therefore ensures that $(X_{k}, X_{k+\half}) \sim (\iterate_k, \iterate_{k+\half})$ is an optimal coupling for the $W_2$ distance, where we define
    \begin{align*}
        X_{k+\half}
        &\defeq 
        (I- \stepsize S_k) (X_k - m_k) + m_k - \stepsize b_k\,.
    \end{align*}
    On the other hand, defining $X_{k+1}$ such that
    \begin{align*}
        X_{k+1} 
        &\defeq X_{k+\half} - \eta\, \nabla_{\mathsf{BW}} \mH(\iterate_{k+1})[X_{k+1}]\\
        &= 
        (I+ \stepsize\, \Sigma_{k+1})^{-1} (X_{k+\half} - m_{k+1}) + m_{k+1}\,,
    \end{align*}
    we also get that $(X_{k+\half}, X_{k+1}) \sim (\iterate_{k+\half}, \iterate_{k+1})$ are optimally coupled.
    Finally, we construct the random variable $Y_\mH \sim \nu$
    for which $(X_{k+\half}, Y_\mH)$ are optimally coupled for the $W_2$ distance.

    First, we bound the difference in energy.
    From Brenier's theorem, we know that $Y_{\mH}$ and $X_{k+1}$ can both be expressed as
    an affine functions of $X_k$,
    thereby enabling the application of \Cref{lemma:gg-ineqs-V}.
    Doing so, we obtain that
    \begin{align*}
        \EE[
        \mV(\iterate_{k+1}) - \mV(\nu)
        ]
        &=
        \EE[\mV(\iterate_{k+1}) - \mV(\iterate_k)] + \EE[\mV(\iterate_k) - \mV(\nu)]\\
        &\leq
        \EE \left\langle \nabla_{\mathsf{BW}} \mV(\iterate_k)(X_k), X_k - Y_\mV \right\rangle - \frac{\alpha}{2}\, \EE \norm{X_k - Y_\mV}^2
        \\
        &\qquad
        +
        \EE \left\langle \nabla_{\mathsf{BW}} \mV(\iterate_k)(X_k), X_{k+1} - X_k \right\rangle + \frac{\beta}{2}\, \EE \norm{X_{k+1} - X_k}^2
        \ctag{by \Cref{lemma:gg-ineqs-V}}
        \\
        &=
        -\frac{\alpha}{2}\, \EE \norm{X_k - Y_\mV}^2
        + \EE \left\langle \nabla_{\mathsf{BW}} \mV(\iterate_k)(X_k),
        X_{k+1} - Y_\mV \right\rangle
        \\
        &\qquad
        + \frac{1}{2\stepsize}\, \EE \norm{X_{k+1} - X_k}^2
        - 
        \Bigl( \frac{1}{2\stepsize} - \frac{\beta}{2} \Bigr)\,
        \EE \norm{X_{k+1} - X_k}^2
        \\
        &=
        -\frac{\alpha}{2}\, \EE \norm{X_k - Y_\mV}^2
        - \EE \left\langle e_k(X_k), X_{k+1} - Y_\mV \right\rangle
        -\frac{1}{\stepsize}\, \EE\langle 
        X_{k+\half} - X_{k},
        X_{k+1} - Y_\mV
        \rangle
        \\
        &\qquad
        + \frac{1}{2\stepsize}\, \EE \norm{X_{k+1} - X_k}^2
        -
        \Bigl( \frac{1}{2\stepsize} - \frac{\beta}{2} \Bigr)\,
        \EE \norm{X_{k+1} - X_k}^2
        \\
        &= 
        \frac{1}{2\stepsize}\,
        \left( 
        1 - \alpha\stepsize
        \right)\,
        \EE \norm{X_k - Y_\mV}^2
        - \EE \left\langle e_k(X_k), X_{k+1} - Y_\mV \right\rangle
        -
        \Bigl( \frac{1}{2\stepsize} - \frac{\beta}{2} \Bigr)\,
        \EE \norm{X_{k+1} - X_k}^2
        \\
        &\qquad
        +\frac{1}{2\stepsize}\, \EE 
        \bigl[ 
        \norm{X_{k+1} - X_k}^2
        -
        \norm{X_k - Y_\mV}^2
        -
        2
        \,\langle 
        X_{k+\half} - X_{k},
        X_{k+1} - Y_\mV
        \rangle
        \bigr]\,.
    \end{align*}
    Now we bound the difference in entropy.
    Since $Y_\mH$ and $X_{k+1}$ are both optimally coupled with $X_{k+\half}$,
    we know that $(Y_\mH, X_{k+1})$ are coupled along a generalized geodesic.
    Hence, we can apply \Cref{lem:cvx}
    to obtain that 
    \begin{align*}
        \EE[\mH(\iterate_{k+1}) - \mH(\nu)]
        &\leq
        \EE \left\langle \nabla_{\mathsf{BW}} \mH(\iterate_{k+1})[X_{k+1}], X_{k+1} - Y_\mH \right\rangle
        \\
        &=
        - \frac{1}{\stepsize}\,
        \EE \langle X_{k+1} - X_{k+\half}, X_{k+1} - Y_\mH \rangle
        \\
        &= 
        \frac{1}{2\stepsize}\,
        \EE
        \bigl[ 
        \|X_{k+\half} - Y_\mH\|^2 - \|X_{k+1} - X_{k+\half}\|^2 
        -
        \|X_{k+1} - Y_\mH\|^2
        \bigr]
        \\
        &\leq 
        \frac{1}{2\stepsize}\,
        \EE
        \bigl[ 
        \|X_{k+\half} - Y_\mV\|^2 - \|X_{k+1} - X_{k+\half}\|^2 
        - 
        \|X_{k+1} - Y_\mH\|^2
        \bigr]
        \ctag{since $(X_{k+\half}, Y_\mH)$ are optimally coupled}\,.
    \end{align*}
    Now, we sum the above inequalities
    to obtain our desired bound on $\EE[\mF(\iterate_{k+1}) - \mF(\nu)]$.
    We obtain that
    \begin{align}
        \EE[\mF(\iterate_{k+1}) - \mF(\nu)]
        &= 
        \EE[
        \mV(\iterate_{k+1}) - \mV(\nu)
        ]
        +
        \EE[
        \mH(\iterate_{k+1}) - \mH(\nu)
        ]
        \nonumber
        \\
        &\leq
        \frac{1}{2\stepsize}\,
        \EE 
        \bigl[
        \left( 
        1 - \alpha\stepsize
        \right)\,
        \norm{X_k - Y_\mV}^2
        - 
        \norm{X_{k+1} - Y_\mH}^2
        \bigr]
        \nonumber
        \\
        &\qquad
        + \frac{1}{2\stepsize}\,
        \EE \bigl[ 
        \norm{X_{k+1} - X_k}^2
        -
        \norm{X_k - Y_\mV}^2
        +
        \|X_{k+\half} - Y_\mV\|^2 
        - \|X_{k+1} - X_{k+\half}\|^2 
        \bigr]
        \nonumber
        \\
        &\qquad
        - \frac{1}{2\stepsize}\,
        \EE \bigl[
        2
        \,\langle 
        X_{k+\half} - X_{k},
        X_{k+1} - Y_\mV
        \rangle
        \bigr]
        \nonumber
        \\
        &\qquad
        - \EE \left\langle e_k(X_k), X_{k+1} - Y_\mV \right\rangle
        -
        \Bigl( \frac{1}{2\stepsize} - \frac{\beta}{2} \Bigr)\,
        \EE \norm{X_{k+1} - X_k}^2
        \nonumber
        \\
        &=
        \frac{1}{2\stepsize}\,\EE 
        \bigl[
        \left( 
        1 - \alpha\stepsize
        \right)\,
        \norm{X_k - Y_\mV}^2
        - 
        \norm{X_{k+1} - Y_\mH}^2
        \bigr]
        \nonumber
        \\
        &\qquad
        - \EE \left\langle e_k(X_k), X_{k+1} - Y_\mV \right\rangle
        -
        \Bigl( \frac{1}{2\stepsize} - \frac{\beta}{2} \Bigr)\,
        \EE \norm{X_{k+1} - X_k}^2\,.
        \label[Inequality]{ineq:pre-evi}
    \end{align}
    Finally, it remains to bound the error term on the last line.
    For this, we consider two cases based on whether or not the
    error term $e_k$ is identically zero:
    \begin{itemize}
        \ii 
        In the case of FB--GVI where we have access to the exact gradient $\nabla_{\BW} \mV(p_k)$, we have that $e_k \equiv 0$, so
    \begin{align*}
        - \EE \left\langle e_k(X_k), X_{k+1} - Y_\mV \right\rangle
        &= 0\,.
    \end{align*}
    Combining this with \Cref{ineq:pre-evi}, we obtain that
    with $\stepsize \leq \frac{1}{\beta}$,
    \begin{align*}
        \EE[\mF(\iterate_{k+1}) - \mF(\nu)]
        &\leq
        \frac{1}{2\stepsize}\,
        \EE \bigl[
        \left( 
        1 - \alpha\stepsize
        \right)\,
        \norm{X_k - Y_\mV}^2
        -
        \norm{X_{k+1} - Y_\mH}^2
        \bigr]
        -
        \Bigl( \frac{1}{2\stepsize} - \frac{\beta}{2} \Bigr)\,
        \EE \norm{X_{k+1} - X_k}^2\\
        &\leq
        \frac{1}{2\stepsize}\,
        \EE \bigl[
        \left( 
        1 - \alpha\stepsize
        \right)\,
        \norm{X_k - Y_\mV}^2
        -
        \norm{X_{k+1} - Y_\mH}^2
        \bigr]\,.
    \end{align*}
    Rearranging, we conclude that if $e_k \equiv 0$ and $\stepsize \leq \frac{1}{\beta}$,
    \begin{align}
    \label{eq:proof-no-rate}
        \EE W_2^2(\iterate_{k+1}, \nu)
        &\leq 
        \EE\norm{X_{k+1} - Y_\mH}^2
        \\
        &\leq
        (1 - \alpha\stepsize)\,\EE \norm{X_k - Y_\mV}^2 - 2\stepsize \,\EE[\mF(\iterate_{k+1}) - \mF(\nu)]
        \label[Inequality]{ineq:osi-precursor}
        \\
        &= 
        (1 - \alpha\stepsize)\,\EE W_2^2(\iterate_k, \nu) - 2\stepsize\, \EE[\mF(\iterate_{k+1}) - \mF(\nu)]\,.
        \ctag{since conditioned on $\SF_k$, $(X_k, Y_\mV)$ are optimally coupled}
        \nonumber
    \end{align}

    \ii
    Otherwise, if $e_k$ is not necessarily identically 0,
    we can still compute
    \begin{align*}
        - \EE \left\langle e_k(X_k), X_{k+1} - Y_\mV \right\rangle
        &=
        - \EE \left\langle e_k(X_k), X_{k+1} - X_k \right\rangle
        \ctag{since $e_k \indep (X_k, Y_\mV)$ by construction}
        \\
        &\leq
        \stepsize\, \EE \norm{e_k(X_k)}^2 + \frac{1}{4\stepsize} \,\EE \norm{X_{k+1} - X_k}^2\,.
        \ctag{Cauchy--Schwarz and Young's inequality}
    \end{align*}
    Hence, combining this with \Cref{ineq:pre-evi}, we obtain that
    for $\stepsize \leq \frac{1}{2\beta}$,
    \begin{align*}
        \EE[\mF(\iterate_{k+1}) - \mF(\nu)]
        &\leq
        \frac{1}{2\stepsize}\,
        \EE \bigl[
        \left( 
        1 - \alpha\stepsize
        \right)\,
        \norm{X_k - Y_\mV}^2
        -
        \norm{X_{k+1} - Y_\mH}^2
        \bigr]
        +
        \stepsize\, \EE \norm{e_k(X_k)}^2 \\
        &\qquad{} -
        \Bigl( \frac{1}{4\stepsize} - \frac{\beta}{2} \Bigr)\,
        \EE \norm{X_{k+1} - X_k}^2\\
        &\leq
        \frac{1}{2\stepsize}\,
        \EE \bigl[
        \left( 
        1 - \alpha\stepsize
        \right)\,
        \norm{X_k - Y_\mV}^2
        -
        \norm{X_{k+1} - Y_\mH}^2
        \bigr]
        +
        \stepsize\,\EE \sigma_k^2\,.
    \end{align*}
    Rearranging, we conclude that as long as $\stepsize \leq \frac{1}{2\beta}$,
    \begin{align*}
        \EE W_2^2(\iterate_{k+1}, \nu)
        &\leq 
        \EE\norm{X_{k+1} - Y_\mH}^2
        \\
        &\leq
        (1 - \alpha\stepsize)\,\EE \norm{X_k - Y_\mV}^2 - 2\stepsize\, \EE[\mF(\iterate_{k+1}) - \mF(\nu)] + 2\stepsize^2\, \EE \sigma_k^2
        \\
        &= 
        (1 - \alpha\stepsize)\,\EE W_2^2(\iterate_k, \nu) - 2\stepsize\, \EE[\mF(\iterate_{k+1}) - \mF(\nu)]+ 2\stepsize^2 \,\EE \sigma_k^2\,.
        \ctag{since $(X_k, Y_\mV)$ are optimally coupled}
    \end{align*}
    \end{itemize}
    Combining these two cases,
    we have demonstrated our desired inequality.
\end{proof}

\begin{remark}
    Consider specializing the above proof to the case where $\nu = \iterate_k$, for which $Y_\mV= Y_\mH= X_{k}$, so that $(X_k, X_{k+\half}) \sim (\iterate_k, \iterate_{k+\half})$ and $(X_{k+\half}, X_{k+1}) \sim (\iterate_{k+\half}, \iterate_{k+1})$ are optimally coupled for the $W_2$ distance.
    Then from \Cref{ineq:osi-precursor}, we obtain that
    \begin{align*}
        \EE \norm{X_{k+1} - Y_\mH}^2
        &\leq
        (1 - \alpha\stepsize)\,\EE \norm{X_k - Y_\mV}^2 - 2\stepsize \,\EE[\mF(\iterate_{k+1}) - \mF(\nu)]
        \ctag{\Cref{ineq:osi-precursor}}
        \\
        \implies
        \EE \norm{X_{k+1} - X_k}^2
        &\leq
        - 2\stepsize \,\EE[\mF(\iterate_{k+1}) - \mF(\iterate_k)]\,.
        \ctag{since $\nu = \iterate_k$ and $Y_{\mV} = Y_\mH= X_k$}
    \end{align*}
    As a corollary, we obtain the following lemma, which will be useful in subsequent analysis.
\end{remark}

\begin{lemma}
    \label{lemma:iterate-osi}
    Suppose that $V$ is $\beta$-smooth.
    Let $(\iterate_k)_{k \in \NN}$ be the iterates of FB--GVI~\eqref{eq:dfb1}--\eqref{eq:dfb2}.
    Let $\stepsize > 0$ be such that
    $\stepsize \leq \frac{1}{\beta}$.
    Let $(X_k, X_{k+\half}) \sim (\iterate_k, \iterate_{k+\half})$ and $(X_{k+\half}, X_{k+1}) \sim (\iterate_{k+\half}, \iterate_k)$ be optimally coupled for the $W_2$ distance.
    Then,
    \begin{align*}
        \EE
        \norm{X_{k+1} - X_k}^2
        &\leq 
        - 2\stepsize\,\EE [\mF(\iterate_{k+1}) - \mF(\iterate_k)]\,.
    \end{align*}
\end{lemma}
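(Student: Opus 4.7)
The statement is obtained by specializing the one-step inequality of \Cref{lemma:discrete-evi} to $\nu = \iterate_k$. Since $V$ is $\beta$-smooth, it is automatically $(-\beta)$-convex, so \Cref{lemma:discrete-evi} applies with $\alpha = -\beta$. In the FB--GVI setting, the error $e_k$ vanishes identically, so the hypothesis $\stepsize \le 1/\beta$ suffices.

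The key point is to verify that, with $\nu = \iterate_k$, the auxiliary couplings constructed inside the proof of \Cref{lemma:discrete-evi} can be chosen so that both $Y_\mV = X_k$ and $Y_\mH = X_k$ almost surely. For $Y_\mV$ this is immediate: $(X_k, Y_\mV)$ is any optimal coupling between $\iterate_k$ and $\iterate_k$, so we may take $Y_\mV = X_k$. For $Y_\mH$ I would argue as follows: the forward step of FB--GVI is the affine map $T \colon x \mapsto (I - \stepsize S_k)(x - m_k) + m_k - \stepsize b_k$ sending $\iterate_k$ to $\iterate_{k+\half}$, and the step-size restriction $\stepsize \le 1/\beta$ guarantees $I - \stepsize S_k \succeq 0$. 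Hence $T$ is the gradient of a convex quadratic, and its inverse $T^{-1}$ is likewise the gradient of a convex function; by Brenier's theorem, $T^{-1}$ is the optimal transport map from $\iterate_{k+\half}$ to $\iterate_k$. Since $X_{k+\half} = T(X_k)$, the pair $(X_{k+\half}, X_k) = (X_{k+\half}, T^{-1}(X_{k+\half}))$ is therefore an optimal coupling, and we may take $Y_\mH = X_k$.

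With these choices in hand, I would invoke \Cref{ineq:osi-precursor} from the proof of \Cref{lemma:discrete-evi} (the penultimate line before rearranging in the FB--GVI case), which reads
\[
\EE \norm{X_{k+1} - Y_\mH}^2 \le (1 - \alpha\stepsize)\, \EE \norm{X_k - Y_\mV}^2 - 2\stepsize\, \EE[\mF(\iterate_{k+1}) - \mF(\nu)]\,.
\]
Substituting $\nu = \iterate_k$, $Y_\mV = Y_\mH = X_k$ makes the $\norm{X_k - Y_\mV}^2$ term vanish (so the value of $\alpha$ becomes irrelevant), and the left-hand side reduces to $\EE \norm{X_{k+1} - X_k}^2$. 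This yields exactly the claimed bound.

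The only non-routine ingredient is the identification of the optimal coupling between $\iterate_{k+\half}$ and $\iterate_k$, which is the main place the step-size constraint is used. Once that is in hand, the remainder is a direct read-off from the proof of the one-step inequality, and no independent computation is needed.
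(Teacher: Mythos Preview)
Your proposal is correct and matches the paper's own argument exactly: the paper obtains \Cref{lemma:iterate-osi} by specializing the proof of \Cref{lemma:discrete-evi} to $\nu = \iterate_k$, taking $Y_\mV = Y_\mH = X_k$, and reading off \Cref{ineq:osi-precursor}. One small simplification: you don't need the inverse-map Brenier argument for $Y_\mH$, since once $(X_k, X_{k+\half})$ is an optimal coupling of $(\iterate_k, \iterate_{k+\half})$ (already established in the proof of \Cref{lemma:discrete-evi}), symmetry of optimal couplings immediately gives that $(X_{k+\half}, X_k)$ is optimal for $(\iterate_{k+\half}, \iterate_k)$.
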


\section{Eigenvalue control of the iterates}

We will show the following eigenvalue bound result:

\begin{lemma}
    \label{lemma:iterate-eigbounds}
    At the $k$-th iteration of \Cref{alg:FBGVI},
    suppose that we have
    $\gamma_0 I\preceq \Sigma_k^{-1} \preceq \gamma_1 I$.
    As long as
    $0\leq \stepsize \leq \frac{1}{\gamma_1}$
    and $\gamma_0 I\preceq S_k \preceq \gamma_1 I$,
    we then have that
    \[
    \gamma_1^{-1} I\preceq \Sigma_{k+1} \preceq \gamma_0^{-1} I\,.
    \]
\end{lemma}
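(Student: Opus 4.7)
The plan is to trace the eigenvalues through the two sub-updates in \Cref{alg:FBGVI} by exploiting simultaneous diagonalizability whenever possible, and then to reduce the final analysis to a scalar computation.

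First, I would observe that $M_{k+1} = I - \eta S_k$ is symmetric and that, under the hypothesis $\gamma_0 I \preceq S_k \preceq \gamma_1 I$ with $0 \le \eta \le 1/\gamma_1$, its eigenvalues lie in the non-negative interval $[1 - \eta\gamma_1,\, 1 - \eta\gamma_0]$. For $\Sigma_{k+\half} = M_{k+1}\Sigma_k M_{k+1}$, the sandwich inequality $(Mx)^\T \Sigma (Mx) \ge \lambda_{\min}(\Sigma)\,\|Mx\|^2$ together with the analogous upper bound yields
\begin{align*}
\frac{(1-\eta\gamma_1)^2}{\gamma_1}\, I \;\preceq\; \Sigma_{k + \half} \;\preceq\; \frac{(1-\eta\gamma_0)^2}{\gamma_0}\, I\,,
\end{align*}
using the given bounds $\gamma_1^{-1} I \preceq \Sigma_k \preceq \gamma_0^{-1} I$.

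Next, I would exploit the structure of the BW JKO update. Since $\Sigma_{k+\half}$ and $\Sigma_{k+\half} + 4\eta I$ are simultaneously diagonalizable, the matrix square root in the formula for $\Sigma_{k+1}$ acts eigenvalue-wise, so if $\Sigma_{k+\half} = U D U^\T$ with $D = \diag(\sigma_1,\dots,\sigma_d)$, then $\Sigma_{k+1} = U\,\diag(\phi(\sigma_1),\dots,\phi(\sigma_d))\,U^\T$, where
\begin{align*}
\phi(\sigma) \;\defeq\; \tfrac12\,\bigl(\sigma + 2\eta + \sqrt{\sigma(\sigma + 4\eta)}\,\bigr)\,.
\end{align*}
A short derivative computation shows that $\phi$ is strictly increasing on $[0,\infty)$, so the extremal eigenvalues of $\Sigma_{k+1}$ are obtained by applying $\phi$ to the extremal eigenvalues of $\Sigma_{k+\half}$.

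The crux of the argument is then the algebraic identity $\phi((1-\eta\gamma)^2/\gamma) = 1/\gamma$ for any $\gamma > 0$ with $\eta\gamma \le 1$. I would verify this by expanding: $(1-\eta\gamma)^2/\gamma + 2\eta = (1+\eta^2\gamma^2)/\gamma$, while $(1-\eta\gamma)^2 + 4\eta\gamma = (1+\eta\gamma)^2$ (this is the one clean cancellation on which everything hinges), so $\sqrt{\sigma(\sigma+4\eta)} = (1-\eta^2\gamma^2)/\gamma$, and the two contributions add to $2/\gamma$. Applying this to $\gamma = \gamma_0$ and $\gamma = \gamma_1$ together with the monotonicity of $\phi$ and the Step~2 bounds on $\Sigma_{k+\half}$ delivers $\gamma_1^{-1} I \preceq \Sigma_{k+1} \preceq \gamma_0^{-1} I$.

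I expect the only minor obstacle is justifying that the operations on $\Sigma_{k+\half}$ in the JKO step really are eigenvalue-wise (i.e.\ that the principal matrix square root commutes with $\Sigma_{k+\half}$); this follows from simultaneous diagonalizability of $\Sigma_{k+\half}$ and $\Sigma_{k+\half} + 4\eta I$. Beyond that, the argument is a direct monotone-function calculation.
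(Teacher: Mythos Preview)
Your proposal is correct and follows essentially the same approach as the paper: bound the eigenvalues of $\Sigma_{k+\half}$ via the sandwich $M_{k+1}\Sigma_k M_{k+1}$, note that the JKO update acts eigenvalue-wise via the monotone map $\phi$ (the paper calls it $f_\eta$), and then verify the identity $\phi\bigl((1-\eta\gamma)^2/\gamma\bigr)=1/\gamma$ to close both bounds. The paper's proof is structurally identical, differing only in notation and presentation order.
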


\begin{proof}
    Define the monotonically increasing function $f_\eta \colon \RR_{\geq0} \to \RR_{\geq0}$ such that
    \[
    f_\eta(x) = \half\,
    \bigl( 
    x + 2\eta + \sqrt{x\,(x+4\eta)}
    \bigr)\,.
    \]
    First, we make note of the following algebraic identity.
    Define $x_\gamma \defeq (1 - \eta\gamma)^2 / \gamma$. Then we have that
    \begin{align}
        f_\eta(x_\gamma) 
        &=
        \half\, \biggl( 
        \frac{(1 - \eta\gamma)^2}{\gamma} + 2\eta + \sqrt{\Bigl( \frac{(1 - \eta\gamma)^2}{\gamma} \Bigr)\, \Bigl( \frac{(1 - \eta\gamma)^2}{\gamma} + 4\eta \Bigr)}
        \biggr)
        \nonumber
        \\
        &= 
        \frac{1}{2\gamma}\, \bigl( 1 + \eta^2 \gamma^2 + \sqrt{(1 - \eta\gamma)^2\,(1 + \eta\gamma)^2} \bigr)
        \nonumber
        \\
        &= 
        \frac{1}{2\gamma}\, \bigl( 1 + \eta^2 \gamma^2 + (1-\eta\gamma)\,(1+\eta\gamma) \bigr)
        \nonumber
        \\
        &= \frac{1}{\gamma}\,.
        \label{eqn:algebraic-identity}
    \end{align}
    Now,
    let $\lambda_{\min}(M), \lambda_{\max}(M)$ denote the 
    minimum and maximum eigenvalues of a matrix $M \in \mathbf{S}^d$.
    The conditions $\eta \leq \gamma_1^{-1}$ and $S_k \preceq \gamma_1 I$ then
    imply that $I- \eta S_k \succeq 0$.
    Hence, we then have that
    \begin{align*}
        \lambda_{\min}(\Sigma_{k+\half})
        &= 
        \lambda_{\min}\bigl( (I- \eta S_k)\, \Sigma_k\, (I- \eta S_k) \bigr)\\
        &\geq
        \lambda_{\min}^2(I- \eta S_k)\, \lambda_{\min}(\Sigma_k)\\
        &\geq
        (1 - \eta \gamma_1)^2\, \lambda_{\min}(\Sigma_k)\\
        &\geq
        \frac{(1 - \eta \gamma_1)^2}{\gamma_1}\\
        &=
        x_{\gamma_1}\,.
    \end{align*}
    Now, we also note that $\Sigma_{k+\half}$ and $\Sigma_{k+1}$ commute
    by construction, so since $f_\eta$ is a monotonically increasing function,
    \begin{align*}
        \lambda_{\min}(\Sigma_{k+1})
        &=
        f_\eta\bigl(\lambda_{\min}(\Sigma_{k+\half})\bigr)
        \geq
        f_\eta(x_{\gamma_1})
        =
        \frac{1}{\gamma_1}\,,
    \end{align*}
    where the last equality follows from \Cref{eqn:algebraic-identity}.

    Similarly, for the upper bound, we have that
    \begin{align*}
        \lambda_{\max}(\Sigma_{k+\half})
        &= 
        \lambda_{\max}\bigl( (I- \eta S_k)\, \Sigma_k\, (I- \eta S_k) \bigr)\\
        &\leq
        \lambda_{\max}^2(I- \eta S_k)\, \lambda_{\max}(\Sigma_k)
        \ctag{since $I- \eta S_k \succeq 0$}
        \\
        &\leq
        (1 - \eta \gamma_0)^2\, \lambda_{\max}(\Sigma_k)\\
        &\leq
        \frac{(1 - \eta \gamma_0)^2 }{\gamma_0}\,.
    \end{align*}
    Thus, we similarly obtain
    \begin{align*}
        \lambda_{\max}(\Sigma_{k+1})
        &=
        f_\eta\bigl(\lambda_{\max}(\Sigma_{k+\half})\bigr)
        \leq
        f_\eta(x_{\gamma_0})
        =
        \frac{1}{\gamma_0}\,.
    \end{align*}
    Combining the above results, this proves that $\gamma_1^{-1} I\preceq \Sigma_{k+1} \preceq \gamma_0^{-1} I$ which is what we set out to show.
\end{proof}

Note that for (stochastic) FB--GVI, we have $\alpha I \preceq S_k \preceq \beta I$, so \Cref{lemma:iterate-eigbounds} holds with $\gamma_0 = \alpha$ and $\gamma_1 = \beta$. Hence, we obtain the following corollary:

\begin{corollary}
    \label{lemma:iterate-smoothness}
    Suppose that \Cref{alg:FBGVI} is initialized with a matrix $\Sigma_0$ such that
    $\beta^{-1} I \preceq \Sigma_0$, that $V$ is $\beta$-smooth, and that the step size satisfies $\eta \le \frac{1}{\beta}$.
    Then $\beta^{-1} I \preceq \Sigma_k$ for all $k$.
\end{corollary}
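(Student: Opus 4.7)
My plan is to proceed by induction on $k$, with the inductive step reducing to a direct application of Lemma~\ref{lemma:iterate-eigbounds}. The base case $k=0$ is the hypothesis $\beta^{-1} I \preceq \Sigma_0$.

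For the inductive step, I would assume $\beta^{-1} I \preceq \Sigma_k$ (equivalently $\Sigma_k^{-1} \preceq \beta I$) and apply Lemma~\ref{lemma:iterate-eigbounds} with $\gamma_1 = \beta$. The two non-inductive hypotheses then read: the step size bound $\eta \leq 1/\beta$, which is given; and $S_k \preceq \beta I$. The latter follows from the $\beta$-smoothness of $V$, which gives $\nabla^2 V \preceq \beta I$ pointwise on $\RR^d$: in the FB--GVI case, $S_k = \EE_{\iterate_k} \nabla^2 V \preceq \beta I$, and in the Stochastic FB--GVI case, $S_k = \nabla^2 V(\hat X_k) \preceq \beta I$.

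The main (mild) subtlety I anticipate is that Lemma~\ref{lemma:iterate-eigbounds} is stated with a symmetric two-sided hypothesis, requiring $\gamma_0 I \preceq S_k$ and $\gamma_0 I \preceq \Sigma_k^{-1}$ in addition to the corresponding upper bounds. Since $V$ is not assumed convex, there is no positive uniform lower bound on $\nabla^2 V$, so the matching upper bound $\Sigma_{k+1} \preceq \gamma_0^{-1} I$ in the lemma's conclusion cannot be made non-vacuous. To sidestep this, I would reinspect the proof of Lemma~\ref{lemma:iterate-eigbounds}: the lower bound $\gamma_1^{-1} I \preceq \Sigma_{k+1}$ there is established using only $\eta \leq 1/\gamma_1$, $S_k \preceq \gamma_1 I$, and $\Sigma_k^{-1} \preceq \gamma_1 I$, and the bound $\gamma_0 I \preceq S_k$ enters solely in the derivation of the upper bound on $\Sigma_{k+1}$. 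Therefore, one can safely invoke the lemma with, say, $\gamma_0 = -\beta$ (trivially satisfied) and retain only the lower bound from its conclusion. This yields $\Sigma_{k+1} \succeq \beta^{-1} I$, closing the induction.
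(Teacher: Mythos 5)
Your proposal is correct and follows essentially the same route as the paper, which deduces the corollary directly from \Cref{lemma:iterate-eigbounds} by taking $\gamma_1 = \beta$ (using $S_k \preceq \beta I$ from $\beta$-smoothness in both the deterministic and stochastic cases) and keeping only the lower eigenvalue bound. Your observation that the hypothesis $\gamma_0 I \preceq S_k$ enters only the upper-bound half of that lemma's proof is accurate and, if anything, makes explicit a point the paper glosses over when it writes ``$\alpha I \preceq S_k \preceq \beta I$'' without a convexity assumption.
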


\section{Proofs of the noiseless algorithm convergence rates}

We obtain the desired convergence rates for FB--GVI
by rearranging and iterating the one-step inequality of \Cref{lemma:discrete-evi}.
First, we derive inequalities that hold for both the convex and strongly convex cases.

For FB--GVI,
we can apply \Cref{lemma:discrete-evi} with $\nu = \hat{\pi}$, $\eta \leq \frac{1}{\beta}$
and $\sigma_k = 0$.
Furthermore, FB--GVI is deterministic, so we may remove the expectations in \Cref{lemma:discrete-evi}.
In this case, the inequality in \Cref{lemma:discrete-evi} implies that for all $k$,
\begin{align}
    W_2^2(\iterate_{k+1}, \hat{\pi})
    &\leq 
    (1 - \alpha \eta)\, W_2^2(\iterate_k, \hat{\pi})
    - 2\eta\, (\mF(\iterate_{k+1}) - \mF(\hat{\pi}))
    \ctag{by \Cref{lemma:discrete-evi}}
    \nonumber
    \\
    &\leq
    \exp\left( -\alpha\eta \right)\, W_2^2(\iterate_k, \hat{\pi})
    - 2\eta\, (\mF(\iterate_{k+1}) - \mF(\hat{\pi}))
    \,.
    \label{ineq:noiseless-evi-sc}
\end{align}
Rearranging \cref{ineq:noiseless-evi-sc}, we obtain
\begin{align}
    \mF(\iterate_{k+1}) - \mF(\hat{\pi}) \leq
    \frac{\exp\left( -\alpha \eta \right)\,W_2^2(\iterate_{k}, \hat{\pi}) - W_2^2(\iterate_{k+1}, \hat{\pi})}{2\eta}\,.
    \label[Inequality]{ineq:KL-bound}
\end{align}
On the other hand, we can also apply \Cref{lemma:discrete-evi} with $\nu = \iterate_k$, $\eta \leq \frac{1}{\beta}$
and $\sigma_k^2 = 0$ to obtain that
\begin{align*}
    W_2^2(\iterate_{k+1}, \iterate_k) &\leq (1 - \alpha \eta)\, W_2^2(\iterate_k, \iterate_k) - 2\eta\, (\mF(\iterate_{k+1}) - \mF(\iterate_k)) = - 2\eta\, (\mF(\iterate_{k+1}) - \mF(\iterate_k))\,.
\end{align*}
Hence, rearranging this inequality, we obtain that
\begin{align}
    \mF(\iterate_{k+1}) - \mF(\iterate_k) \leq -\frac{W_2^2(\iterate_{k+1}, \iterate_k)}{2\eta} \leq 0\,,
    \label[Inequality]{ineq:obj-decrease}
\end{align}
meaning that the objective value decreases with each iteration of the algorithm.

\subsection{Proof of \Cref{thm:noiseless-wc-rate}}
\label{sec:noiseless-wc-rate-proof}

\begin{proof}
    Since $V$ is convex, \Cref{ineq:KL-bound} holds with the choice $\alpha = 0$,
    from which we obtain that
    \begin{align*}
        \mF(\iterate_{k+1}) - \mF(\hat{\pi}) \leq
        \frac{W_2^2(\iterate_k, \hat{\pi}) - W_2^2(\iterate_{k+1}, \hat{\pi})}{2\eta}\,.
    \end{align*}
    Telescoping this inequality, we obtain that
    \begin{align*}
        \mF(\iterate_{N}) - \mF(\hat{\pi})
        \leq
        \frac{1}{N} \sum_{k=1}^{N}
        [\mF(\iterate_{k}) - \mF(\hat{\pi})]
        \leq
        \frac{1}{2\eta N} \sum_{k=0}^{N-1}
        [W_2^2(\iterate_{k}, \hat{\pi}) - W_2^2(\iterate_{k+1}, \hat{\pi})]
        \le 
        \frac{W_2^2(\iterate_0, \hat{\pi})}{2\eta N}\,,
    \end{align*}
    where the first inequality holds by \Cref{ineq:obj-decrease}.
    Hence, with the choice
    \begin{align*}
        \eta = \frac{1}{\beta}\,,\quad\text{and}\quad
        N \gtrsim 
        \frac{\beta W_2^2(\iterate_0, \hat{\pi})}{\varepsilon^2}\,,
    \end{align*}
    we obtain the guarantee
        $
        \mF(\iterate_N) - \mF(\hat{\pi})
        \leq 
        \varepsilon^2,
        $
    proving our desired result.
\end{proof}

\subsection{Proof of \Cref{thm:noiseless-wc-no-rate}}
\label{sec:noiseless-wc-no-rate-proof}
    \begin{proof}
    Our proof makes use of the recent results by~\cite{naldi2021weak}. For every continuous function $\zeta : \RR^d \to \RR$, denote $F_\zeta$ the map defined over $\mP_2(\RR^d)$ by $F_\zeta : \mu \mapsto \int \zeta\, \dd \mu$. We define the NS-topology over $\mP_2(\RR^d)$ to be the initial topology induced by the family $F_{\zeta}$ where $\zeta$ is a continuous function such that $\frac{\zeta(x)}{1+\|x\|^2} \longrightarrow_{\|x\|\to \infty} 0.$ Denoting $\mC$ the set of such $\zeta$ functions, a sequence $(\mu_k)_{k\ge 0}$ converges to $\hat{\mu}$ for the NS-topology iff for every $\zeta \in \mC$, $F_\zeta(\mu_k) \to F_\zeta(\hat{\mu})$. 
    
    The set of bounded continuous functions over $\RR^d$ is included in $\mC$, therefore NS-convergence implies weak-convergence. Moreover, the set of continuous functions $\zeta$ such that $\sup_{x \in \RR^d} \frac{\zeta(x)}{1+\|x\|^2} < \infty$ contains $\mC$, therefore the convergence in the Wasserstein distance implies NS-convergence, see \citet[Equations 1.9 and 1.10]{naldi2021weak}.

    To prove \Cref{thm:noiseless-wc-no-rate}, we apply \citet[Theorem 6.9]{naldi2021weak}. This theorem implies the NS-convergence of FB--GVI which in turn implies weak convergence. We now check the assumptions of \citet[Theorem 6.9]{naldi2021weak}:
    \begin{itemize}[noitemsep,leftmargin=0.2in]
        \item First, the set $A = \BW(\RR^d) \cup \mD$, where $\mD$ is the set of Dirac measures over $\RR^d$ is NS-closed because it is weakly closed. 
        \item Then, denote $T: A \to A$ the map defining FB--GVI (see Equation~\eqref{eq:dfb2}), i.e., $T(\mu) = \jko_{\eta \mH}( (\id - \eta\, \nabla_{\mathsf{BW}}\mV(\mu))_{\#} \mu)$ and $\iterate_{k+1} = T(\iterate_k)$ where $\iterate_0 \in A$. We show that $T$ is asymptotically regular. Using Lemma~\ref{lemma:discrete-evi} with  $\alpha = 0$, if $\eta \leq 1 / \beta$, then
    \begin{align}
        W_2^2(T(\iterate_{k}), \nu) 
        &\leq W_2^2(\iterate_k, \nu) - 2\stepsize\,[\mF(T(\iterate_{k})) - \mF(\nu)]\,. 
    \end{align}
    Inspecting the proof of Lemma~\ref{lemma:discrete-evi}, especially Equation~\eqref{eq:proof-no-rate}, one can see that we actually have the following stronger inequality if $\eta < 1/\beta$: 
    \begin{align}
    \label{eq:bounded1}
        W_2^2(T(\iterate_{k}), \nu) 
        &\leq 
         W_2^2(\iterate_k, \nu) - 2\stepsize\,[\mF(T(\iterate_{k})) - \mF(\nu)] - (1 - \eta \beta)\,W_2^2(T(\iterate_{k}), \iterate_{k})\,. 
    \end{align}
    Besides, if $\nu$ is a minimizer of Problem~\eqref{eq:original}, then $\mF(T(\iterate_{k})) - \mF(\nu) \geq 0$, therefore
    \begin{align}
    \label{eq:bounded}
        W_2^2(T(\iterate_{k}), \nu) 
        &\leq 
         W_2^2(\iterate_k, \nu) - (1 - \eta \beta)\,W_2^2(T(\iterate_{k}), \iterate_{k})\,. 
    \end{align}
    Iterating the last inequality, we obtain $\sum_{k\ge 0} W_2^2(T(\iterate_{k}), \iterate_{k}) < \infty$. In particular, $W_2^2(T(\iterate_{k}), \iterate_{k}) \to 0$, for any initial measure $\iterate_0 \in A$. Therefore, $T$ is asymptotically regular.

    \item The proof of the non-expansiveness of $T$ is similar to the proof of the asymptotic regularity: one can adapt Lemma~\ref{lemma:discrete-evi} to show a one-step inequality where $\nu$ is varying and follows the FB--GVI algorithm. We skip this part of the proof as it is similar to the proof of Lemma~\ref{lemma:discrete-evi}. 

    \item Finally, $(\iterate_k)_{k\ge 0}$ is bounded using Equation~\eqref{eq:bounded}.

    \end{itemize}

    Therefore, using~\citet[Theorem 6.9]{naldi2021weak}, $(\iterate_k)_{k\ge 0}$ converges weakly to some $\pi_\star \in A$.  Besides, using Theorem~\ref{thm:noiseless-wc-rate}, $\mF(\iterate_k) - \min_{\BW(\RR^d)} \mF \to 0$. Therefore, using the weak lower semicontinuity of the KL divergence we have $\mF(\pi_\star) - \min_{\BW(\RR^d)} \mF \leq 0$. In particular, $\pi_\star \in \BW(\RR^d)$ ($\pi_\star$ cannot be in $\mD$ otherwise $\mF(\pi_\star) = +\infty$). Therefore, we also have $\mF(\pi_\star) - \min_{\BW(\RR^d)} \mF \geq 0$ which implies $\mF(\pi_\star) - \min_{\BW(\RR^d)} \mF = 0$. Finally, $\pi_\star$ is a minimizer of Problem~\eqref{eq:original}.

    To conclude the proof, observe that $(p_k)_{k\ge 0}$ converges weakly to $\pi_\star$ and that all these distributions are Gaussian. Therefore the convergence actually happens in Wasserstein distance.
    \end{proof}

\subsection{Proof of \Cref{thm:noiseless-sc-rate}}
\label{sec:noiseless-sc-rate-proof}

\begin{proof}
    Since $\mF(\hat{\pi}) \leq \mF(\iterate_{k+1})$ as $\hat{\pi}$ achieves the minimum
    of $\mF$ among Gaussians, we may
    iterate \Cref{ineq:KL-bound} to obtain 
    \begin{align*}
        W_2^2(\iterate_{N}, \hat{\pi})
        &\leq
        \exp\left( -N \alpha \eta \right)\,
        W_2^2(\iterate_0, \hat{\pi})\,.
    \end{align*}
    Hence, with the choice
    \begin{align*}
        \eta = \frac{1}{\beta} \,, \quad\text{and}\quad
        N \gtrsim 
        \frac{1}{\alpha\eta} \log \frac{\alpha W_2^2(\iterate_0, \hat{\pi})}{\varepsilon^2}
        \asymp
        \frac{\beta}{\alpha} \log \frac{\alpha W_2^2(\iterate_0, \hat{\pi})}{\varepsilon^2}\,,
    \end{align*}
    we obtain the guarantee
    $
        \alpha W_2^2(\iterate_N, \hat{\pi}) 
        \leq
        \varepsilon^2.
        $

    Now, for the guarantee in KL divergence,
    we ``reinitialize''
    the algorithm with distribution $\iterate_N$
    and apply the convex result of
    \Cref{thm:noiseless-wc-rate}.
    With the same choice of $N$ and $\eta$ and assuming $\varepsilon$ is sufficiently small,
    we can apply \Cref{thm:noiseless-wc-rate} to obtain the guarantee
    \begin{align*}
        \mF(\iterate_{2N}) - \mF(\hat{\pi})
        &\leq
        \frac{W_2^2(\iterate_N, \hat{\pi})}{2\eta N}
        \leq
        \frac{\varepsilon^2}{2 \alpha \eta N}
        \lesssim
        \frac{\varepsilon^2}{\log \frac{\alpha W_2^2(\iterate_0, \hat{\pi})}{\varepsilon^2}}
        \lesssim
        \varepsilon^2\,,
    \end{align*}
    proving our desired result.
\end{proof}

\subsection{Proof of \Cref{thm:noiseless-smooth-rate}}
\label{sec:noiseless-smooth-rate-proof}

First, we need a lemma.
\begin{lemma}
    \label{lemma:fake-coercivity}
    Let $\mu_0, \mu_1 \in \PG(\RR^d)$
    be such that $\Sigma_{\mu_0}, \Sigma_{\mu_1} \succeq \beta^{-1} I$. Then if $(X_0, X_1) \sim (\mu_0, \mu_1)$ are optimally coupled for the $W_2$ distance, we have that
    \begin{align*}
        \EE \norm{\nabla_{\BW} \mH(\mu_1)[X_1] - \nabla_{\BW} \mH(\mu_0)[X_0]}^2
        \leq
        20\beta^2\, W_2^2(\mu_0, \mu_1)\,.
    \end{align*}
\end{lemma}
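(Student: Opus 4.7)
The plan is to exploit the closed-form BW gradient $\nabla_{\BW}\mH(\mu)[x] = -\Sigma_\mu^{-1}(x - m_\mu)$ from \eqref{eqn:bw-grads} together with the explicit form of the optimal transport map between Gaussians. Since $\mu_0, \mu_1 \in \PG(\RR^d)$ have positive definite covariances, the $W_2$-optimal coupling is affine: writing $T \defeq \Sigma_0^{-1/2}(\Sigma_0^{1/2}\Sigma_1\Sigma_0^{1/2})^{1/2}\Sigma_0^{-1/2}$, which is symmetric positive-definite and satisfies $T\Sigma_0 T = \Sigma_1$, we can take $X_1 = m_1 + T(X_0 - m_0)$.

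First I would substitute this into the difference and simplify. Using the formula above,
\[
\nabla_{\BW}\mH(\mu_1)[X_1] - \nabla_{\BW}\mH(\mu_0)[X_0]
= \bigl(\Sigma_0^{-1} - \Sigma_1^{-1}T\bigr)\,(X_0 - m_0)\,.
\]
The key algebraic identity is that $T\Sigma_0 T = \Sigma_1$ implies $\Sigma_1^{-1}T = T^{-1}\Sigma_0^{-1}$, so the difference collapses to the clean expression $(I - T^{-1})\,\Sigma_0^{-1}(X_0 - m_0)$.

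Next I would take expectations. Since $X_0 - m_0$ has covariance $\Sigma_0$ and $I - T^{-1}$ is symmetric, the cyclic trace property gives
\[
\EE \norm{(I - T^{-1})\,\Sigma_0^{-1}(X_0 - m_0)}^2
= \Tr\bigl((I - T^{-1})\,\Sigma_0^{-1}\,(I - T^{-1})\bigr)
= \Tr\bigl((I - T^{-1})^2\,\Sigma_0^{-1}\bigr)\,.
\]
Since both $(I - T^{-1})^2$ and $\Sigma_0^{-1}$ are positive semidefinite, the bound $\norm{\Sigma_0^{-1}}_{\mathrm{op}} \le \beta$ yields the intermediate estimate $\beta\, \norm{I - T^{-1}}_{\rm HS}^2$.

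Finally I would interpret $T^{-1}$ as the OT map between the centered Gaussians $\mN(0, \Sigma_1)$ and $\mN(0, \Sigma_0)$. The Bures--Wasserstein identity together with the decomposition $W_2^2(\mu_0, \mu_1) = \norm{m_0 - m_1}^2 + W_2^2(\mN(0, \Sigma_0), \mN(0, \Sigma_1))$ gives
\[
\Tr\bigl((I - T^{-1})\,\Sigma_1\,(I - T^{-1})\bigr) = W_2^2\bigl(\mN(0, \Sigma_0), \mN(0, \Sigma_1)\bigr) \le W_2^2(\mu_0, \mu_1)\,,
\]
so that $\norm{I - T^{-1}}_{\rm HS}^2 \le \lambda_{\min}(\Sigma_1)^{-1}\, W_2^2(\mu_0, \mu_1) \le \beta\, W_2^2(\mu_0, \mu_1)$. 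Combining these two steps yields the bound $\beta^2\, W_2^2(\mu_0, \mu_1)$, comfortably within the claimed constant $20\beta^2$.

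The one step that demands care is the algebraic identity $\Sigma_1^{-1}T = T^{-1}\Sigma_0^{-1}$, since it couples non-commuting symmetric matrices but produces the decisive simplification that reduces the entire computation to a single Hilbert--Schmidt norm. Beyond that, the argument is a routine combination of the trace inequality $\Tr(AB) \le \norm{A}_{\mathrm{op}}\Tr(B)$ for $B \succeq 0$ and the lower eigenvalue bound on $\Sigma_1$; I do not anticipate further obstacles.
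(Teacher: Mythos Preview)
Your proof is correct, and in fact obtains the sharper constant $\beta^2$ rather than $20\beta^2$. The algebraic identity $\Sigma_1^{-1}T = T^{-1}\Sigma_0^{-1}$ follows immediately from $\Sigma_1 = T\Sigma_0 T$ (take inverses, then right-multiply by $T$), and the two trace bounds are routine consequences of $\Sigma_0^{-1}, \Sigma_1^{-1} \preceq \beta I$ together with the Loewner order under conjugation.

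Your route is genuinely different from the paper's. The paper does not exploit the explicit form of the optimal transport map between Gaussians; instead it splits the difference via a triangle inequality into a Lipschitz term $\EE\norm{\nabla_{\BW}\mH(\mu_1)[X_1] - \nabla_{\BW}\mH(\mu_1)[X_0]}^2$ and a term $\EE\norm{\nabla_{\BW}\mH(\mu_1)[X_0] - \nabla_{\BW}\mH(\mu_0)[X_0]}^2$ interpreted as the squared BW gradient norm of $\KL{\cdot}{\mu_1}$ at $\mu_0$. The latter is then controlled by a descent-lemma argument using the geodesic smoothness of the entropy (\Cref{lemma:entropy-smoothness}) and of the potential $-\log\mu_1$, yielding a bound in terms of $\KL{\mu_0}{\mu_1}$, which is in turn bounded by $\beta\, W_2^2(\mu_0,\mu_1)$ via another smoothness inequality. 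This more abstract path accumulates constants (the final $20$) and invokes an auxiliary lemma, whereas your direct matrix computation collapses everything to a single Hilbert--Schmidt estimate. The paper's argument has the conceptual advantage of being phrased in terms of BW-geometric smoothness inequalities that it develops anyway for other purposes, but for this particular lemma your approach is shorter, more elementary, and quantitatively tighter.
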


The proof proceeds as follows. First, we apply
the triangle inequality and the Cauchy--Schwarz inequality to decompose the LHS into two terms which we will control separately.
For the first term, we appeal to the Lipschitzness of $\nabla_{\BW} \mH(\mu_1)$, which is possible since $\Sigma_{\mu_1}^{-1} \preceq \beta I$.
Then for the second term, we will utilize \Cref{lemma:gg-ineqs-V} and \Cref{lemma:entropy-smoothness} to derive a bound in terms of $\KL{\mu_0}{\mu_1}$, which we can then further bound in terms of $W_2^2(\mu_0, \mu_1)$.
Combining these bounds, we obtain our desired result.

\begin{proof}
    Applying the triangle inequality and Cauchy--Schwarz, we obtain that
    \begin{align*}
        \half\,
        \EE \norm{\nabla_{\BW} \mH(\mu_1)[X_1] - \nabla_{\BW} \mH(\mu_0) [X_0]}^2
        &\leq
        \EE \norm{\nabla_{\BW} \mH(\mu_1)[X_1] - \nabla_{\BW} \mH(\mu_1) [X_0]}^2
        \\
        &\qquad
        +
        \EE \norm{\nabla_{\BW} \mH(\mu_1)[X_0] - \nabla_{\BW} \mH(\mu_0) [X_0]}^2\,.
    \end{align*}
    For the first term, we note that
    since $\Sigma_{\mu_1}^{-1} \preceq \beta I$ by assumption,
    we have that
    \begin{align}
        \EE \norm{\nabla_{\BW} \mH(\mu_1)[X_1] - \nabla_{\BW} \mH(\mu_1) [X_0]}^2
        &=
        \EE \|\Sigma_{\mu_1}^{-1} (X_1 - X_0)\|^{2}
        \ctag{by \Cref{eqn:bw-grads}}
        \nonumber
        \\
        &\leq
        \beta^2\, \EE \norm{X_1 - X_0}^{2}
        \ctag{since $\Sigma_{\mu_1}^{-1} \preceq \beta I$}
        \nonumber
        \\
        &= 
        \beta^2\, W_2^2(\mu_0, \mu_1)\,,
        \label[Inequality]{ineq:x11-x10}
    \end{align}
    where the last equality holds since $(X_0, X_1) \sim (\mu_0, \mu_1)$ are optimally coupled by assumption.
    Now, we bound the second term.
    Define the functionals $\mV_1, \mF_1 : \BW(\RR^d)\to \RR$
    such that
    \begin{align*}
        \mV_1(\mu) &\coloneqq -\int \log \mu_1(x) \dd{\mu(x)}\,,\\
        \mF_1(\mu) &\coloneqq \mV_1(\mu) + \mH(\mu)\,.
    \end{align*}
    Note that by \Cref{eqn:bw-grads},
    $\nabla_{\BW} \mV_1(\mu) = - \nabla \log \mu_1 = -\nabla_{\BW} \mH(\mu_1)$,
    so that
    \begin{align*}
        \nabla_{\BW} \mF_1(\mu) 
        = 
        \nabla_{\BW} \mV_1(\mu) 
        +
        \nabla_{\BW} \mH(\mu) 
        =
        \nabla_{\BW} \mH(\mu) 
        -
        \nabla_{\BW} \mH(\mu_1) \,.
    \end{align*}
    Furthermore, we also note that
    \begin{equation}
        \KL{\mu}{\mu_1} = \mF_1(\mu) - \mF_1(\mu_1)\,.
        \label{eqn:kl-eq}
    \end{equation}
    Therefore, the second term that we want to control above can be interpreted as the squared norm of $\nabla_{\BW} \mF_1(\mu_0)$. We will show that $\mF_1$ is smooth, which will allow us to bound the squared gradient norm by a multiple of $\mF_1(\mu_0) - \mF_1(\mu_1) = \KL{\mu_0}{\mu_1}$ by the descent lemma from optimization.

    Let $\gamma \defeq c^{-1} \beta$, where $c \in (0, 1)$ is chosen to satisfy $c \leq (1 - c)^{2}$.
    Define the random variable $X_0'$ as follows:
    \begin{align*}
        X_0' 
        &\defeq
        X_0 - \frac{1}{\gamma}\,\nabla_\BW \mF_1(\mu_0)[X_0]
        \\
        &=
        X_0 - \frac{1}{\gamma}\,(\nabla_\BW \mH(\mu_0) - \nabla_\BW \mH(\mu_1))[X_0]
        \\
        &= 
        X_0 - \frac{1}{\gamma}\, \left( -\Sigma_{\mu_0}^{-1}(X_0 - m_{\mu_0}) + \Sigma_{\mu_1}^{-1}(X_0 - m_{\mu_1}) \right)
        \ctag{by \Cref{eqn:bw-grads}}
        \\
        &= 
        \underbrace{\Bigl( I + \frac{1}{\gamma}\, \Sigma_{\mu_0}^{-1} - \frac{1}{\gamma}\, \Sigma_{\mu_1}^{-1} \Bigr)}_{\defeq M_0}
        X_0 + \frac{1}{\gamma}\,( -\Sigma_{\mu_0}^{-1} m_{\mu_0} + \Sigma_{\mu_1}^{-1} m_{\mu_1} )\,.
    \end{align*}
    Let $\mu_0' \defeq \mathrm{law}(X_0')$.
    Since we have $0 \preceq \Sigma_{\mu_0}^{-1}, \Sigma_{\mu_1}^{-1} \preceq \beta I = c \gamma I$ by assumption,
    we have that
    \begin{align*}
        M_0 =
        I + \frac{1}{\gamma}\, \Sigma_{\mu_0}^{-1} - \frac{1}{\gamma}\, \Sigma_{\mu_1}^{-1}
        \succeq
        I - \frac{1}{\gamma}\, \Sigma_{\mu_1}^{-1}
        \succeq
        (1 - c)\, I
        \succeq 0\,,
    \end{align*}
    so $X_0'$ is equal to the gradient of a convex function of $X_0$.
    Hence, by Brenier's theorem, we conclude that
    $(X_0, X_0')\sim (\mu_0, \mu_0')$ are optimally coupled for the 
    $W_2$ distance.
    Thus, by \Cref{lemma:gg-ineqs-V} applied to the potential $\mV_1$, we find that
    \begin{align}
        \mV_1(\mu_0') - \mV_1(\mu_0)
        &\leq
        \EE \left\langle \nabla_{\BW} \mV_1(\mu_0)[X_0], X_0' - X_0\right\rangle + \frac{\beta}{2}\, \EE\norm{X_0' - X_0}^2
        \ctag{since $- \nabla^2 \log \mu_0 \preceq \beta I$}
        \nonumber
        \\
        &=
        -\EE \left\langle \nabla_{\BW} \mH(\mu_1)[X_0], 
        X_0' - X_0
        \right\rangle 
        + \frac{\beta}{2}\,\EE \norm{X_0' - X_0}^2\,.
        \label[Inequality]{ineq:fi-v}
    \end{align}
    Additionally, we note that since $\beta = c \gamma \leq (1 - c)^2\, \gamma$, we have that
    \begin{align*}
        \Sigma_{\mu_0'} 
        &= M_0 \Sigma_{\mu_0} M_0
        \succeq (1 - c)^2\, \Sigma_{\mu_0}
        \succeq \frac{(1 - c)^2}{\beta}\, I
        \succeq \frac{1}{\gamma}\, I\,.
    \end{align*}
    This implies that $\Sigma_{\mu_0'}^{-1}, \Sigma_{\mu_0}^{-1} \preceq \gamma I$.
    Hence, we can also apply the geodesic smoothness inequality of \Cref{lemma:entropy-smoothness}
    to obtain
    \begin{align}
        \mH(\mu_0') - \mH(\mu_0)
        \leq
        \EE \left\langle \nabla_\BW \mH(\mu_0)[X_0], X_0' - X_0 \right\rangle + \frac{\gamma}{2}\, \EE \norm{X_0' - X_0}^2\,.
        \label[Inequality]{ineq:fi-h}
    \end{align}
    Hence,
    combining \Cref{eqn:kl-eq} with \Cref{ineq:fi-v} and \Cref{ineq:fi-h},
    we obtain that
    \begin{align}
        - \KL{\mu_0}{\mu_1}
        &\leq
        \KL{\mu_0'}{\mu_1} - \KL{\mu_0}{\mu_1}
        \ctag{since $\KL{\mu_0'}{\mu_1} \geq 0$}
        \nonumber
        \\
        &=
        \mF_1(\mu_0') - \mF_1(\mu_0)
        \ctag{by \Cref{eqn:kl-eq}}
        \nonumber
        \\
        &=
        [
        \mV_1(\mu_0') - \mV_1(\mu_0)
        ]
        + 
        [
        \mH(\mu_0') - \mH(\mu_0)
        ]
        \nonumber
        \\
        &\leq
        -\EE \left\langle \nabla_{\BW} \mH(\mu_1)[X_0], 
        X_0' - X_0
        \right\rangle 
        +
        \frac{\beta}{2}\,\EE \norm{X_0' - X_0}^2
        \ctag{by \Cref{ineq:fi-v}}
        \nonumber
        \\
        &\qquad
        +
        \EE \left\langle \nabla_\BW \mH(\mu_0)[X_0], X_0' - X_0 \right\rangle 
        +
        \frac{\gamma}{2}\, \EE \norm{X_0' - X_0}^2
        \ctag{by \Cref{ineq:fi-h}}
        \nonumber
        \\
        &=
        \Bigl( -\frac{1}{\gamma} + \frac{\beta}{2\gamma^2} + \frac{1}{2\gamma} \Bigr)\,
        \EE \norm{\nabla_{\BW} \mH(\mu_0) [X_0] - \nabla_{\BW} \mH(\mu_1) [X_0]}^2
        \ctag{definition of $X_0'$}
        \nonumber
        \\
        &=
        - \frac{1 - c}{2\gamma}\,
        \EE \norm{\nabla_{\BW} \mH(\mu_0) [X_0] - \nabla_{\BW} \mH(\mu_1) [X_0]}^2\,.
        \label[Inequality]{ineq:kl-fi}
    \end{align}
    To bound the LHS of this inequality, we again apply
    \Cref{lemma:gg-ineqs-V} to the potential $\mV_1$ as well as \Cref{lemma:entropy-smoothness} to $\mH$ to obtain
    \begin{align}
        \KL{\mu_0}{\mu_1}
        &=
        \mF_1(\mu_0) - \mF_1(\mu_1)
        \nonumber
        \\
        &= 
        [
        \mV_1(\mu_0) - \mV_1(\mu_1)
        ]
        + 
        [
        \mH(\mu_0) - \mH(\mu_1)
        ]
        \nonumber
        \\
        &\leq
        \EE \left\langle 
        \nabla_{\BW} \mV(\mu_1)[X_1], 
        X_0 - X_1
        \right\rangle 
        +
        \frac{\beta}{2} \,\EE\norm{X_0 - X_1}^2
        \ctag{by \Cref{lemma:gg-ineqs-V} since $-\nabla^2 \log \mu_1 \preceq \beta I$}
        \nonumber
        \\
        &\qquad
        +
        \EE \left\langle \nabla_\BW \mH(\mu_1)[X_1], X_0 - X_1 \right\rangle 
        +
        \frac{\beta}{2}\, \EE \norm{X_0 - X_1}^2
        \ctag{by \Cref{lemma:entropy-smoothness} since $\Sigma_{\mu_0}^{-1}, \Sigma_{\mu_1}^{-1} \preceq \beta I$}
        \nonumber
        \\
        &=
        \beta\, \EE \norm{X_0 - X_1}^2
        \ctag{since $\nabla_{\BW} \mV_1(\mu_1) + \nabla_{\BW} \mH(\mu_1) = \nabla_{\BW} \mF_1(\mu_1) = 0 $}
        \nonumber\\
        &=
        \beta\, W_2^2(\mu_0, \mu_1)\,.
        \label[Inequality]{ineq:kl-w2}
    \end{align}
    Finally, choosing $c = \frac{1}{3}$ so that $c \leq (1 - c)^2$ and combining our above inequalities, we find that
    \begin{align*}
        \half\,
        \EE \norm{\nabla_{\BW} \mH(\mu_{1})[X_1] - \nabla_{\BW}\mH (\mu_0)[X_0]}^2
        &\leq
        \EE \norm{\nabla_{\BW} \mH(\mu_{1})[X_1] - \nabla_{\BW}\mH (\mu_1)[X_0]}^2
        \\
        &\qquad
        +
        \EE \norm{\nabla_{\BW} \mH(\mu_0) [X_0] - \nabla_{\BW} \mH(\mu_1) [X_0]}^2
        \\
        &\leq
        \beta^2\, W_2^2(\mu_0, \mu_1)
        +
        \frac{2\gamma}{1-c}\, \KL{\mu_0}{\mu_1}
        \ctag{by \Cref{ineq:x11-x10} and \Cref{ineq:kl-fi}}
        \\
        &\leq
        10\beta^2\, W_2^2(\mu_0, \mu_1)\,.
        \ctag{by \Cref{ineq:kl-w2}}
    \end{align*}
    Rearranging, we obtain our desired result.
\end{proof}

With this result in mind, we are ready to prove our desired stationary point guarantee.
\begin{proof}
    Let $(X_k, X_{k+\half}) \sim (\iterate_k, \iterate_{k+\half})$ and $(X_{k+\half}, X_{k+1}) \sim (\iterate_{k+\half}, \iterate_k)$ be optimally coupled for the $W_2$ distance,
    noting
    as in the proof of \Cref{lemma:discrete-evi}
    that by construction,
    \begin{align*}
        \frac{X_k - X_{k+1}}{\eta}
        =
        \nabla_{\BW} \mV(\iterate_k)[X_k] + \nabla_{\BW} \mH(\iterate_{k+1})[X_{k+1}]\,.
    \end{align*}
    Applying \Cref{lemma:iterate-osi}, we obtain that
    \begin{align*}
        \EE \norm{X_{k+1} - X_{k}}^2
        \leq 
        -2 \eta\, \EE [\mF(\iterate_{k+1}) - \mF(\iterate_k)]\,.
    \end{align*}
    Telescoping this inequality, we find that
    \begin{align}
        \min_{k \in \{0, \ldots, N-1\}}
        \EE \norm{X_{k+1} - X_k}^2
        \nonumber
        &\leq
        \frac{1}{N} \sum_{k=0}^{N-1} 
        \EE \norm{X_{k+1} - X_k}^2
        \nonumber
        \\
        &\leq 
        -\frac{2\eta}{N} \sum_{k=0}^{N - 1}
        \EE [\mF(\iterate_{k+1}) - \mF(\iterate_k)]
        \nonumber
        \\
        &= -\frac{2\eta}{N}\, \EE [\mF(\iterate_{N}) - \mF(\iterate_0)]
        \nonumber
        \\
        &\leq \frac{2 \eta \Delta}{N}\,.
        \label[Inequality]{ineq:distance-bound}
    \end{align}
    Now, let $(X_k, X_{k+1}^\star) \sim (\iterate_k, \iterate_{k+1})$ be optimally coupled for the $W_2$ distance.
    By \Cref{lemma:iterate-smoothness}, we have that $\Sigma_k^{-1} \preceq \beta I$
    for all $k$, meaning that we can apply \Cref{lemma:fake-coercivity}
    with $\mu_0 = \iterate_k$ and $\mu_1 = \iterate_{k+1}$
    to obtain that
    \begin{align}
        \EE \norm{\nabla_{\BW} \mH(\iterate_k)[X_k] - \nabla_{\BW} \mH(\iterate_{k+1})[X_{k+1}^\star]}^2
        &\leq 20\beta^2\, W_2^2(\iterate_k, \iterate_{k+1})\,.
        \label[Inequality]{ineq:triangle-bound-1}
    \end{align}
    Furthermore, we have that
    \begin{align}
        \EE \norm{\nabla_{\BW} \mH(\iterate_{k+1})[X_{k+1}^\star] - \nabla_{\BW} \mH(\iterate_{k+1})[X_{k+1}]}^2
        &=
        \EE \norm{\Sigma_{k+1}^{-1}\,(X_{k+1}^\star - X_{k+1})}^2
        \nonumber
        \\
        &\leq
        \beta^2\, \EE \norm{X_{k+1}^\star - X_{k+1}}^2
        \nonumber
        \\
        &\leq
        2\beta^2\, \EE \norm{X_{k+1}^\star - X_k}^2 + 2\beta^2\,\EE \norm{X_{k+1} - X_k}^2
        \nonumber
        \\
        &=
        2\beta^2\, W_2^2(\iterate_k, \iterate_{k+1}) + 2\beta^2\,\EE \norm{X_{k+1} - X_k}^2\,.
        \label[Inequality]{ineq:triangle-bound-2}
    \end{align}
    With these inequalities in mind, we obtain that
    \begin{align*}
        \frac{1}{3}\,
        \norm{\nabla_{\BW} \mF(\iterate_k)}_{\iterate_k}^{2}
        &= 
        \frac{1}{3}\,
        \EE \norm{\nabla_{\BW} \mV(\iterate_k)[X_k] + \nabla_{\BW} \mH(\iterate_k)[X_k]}^2
        \\
        &\leq
        \EE \norm{\nabla_{\BW} \mV(\iterate_k)[X_k] + \nabla_{\BW} \mH(\iterate_{k+1})[X_{k+1}]}^2
        +
        \EE \norm{\nabla_{\BW} \mH(\iterate_k)[X_k] - \nabla_{\BW} \mH(\iterate_{k+1})[X_{k+1}^\star]}^2
        \\
        &\qquad
        +
        \EE \norm{\nabla_{\BW} \mH(\iterate_{k+1})[X_{k+1}^\star] - \nabla_{\BW} \mH(\iterate_{k+1})[X_{k+1}]}^2
        \ctag{by triangle inequality}\\
        &\leq
        \frac{1}{\eta^2}\, \EE \norm{X_{k+1} - X_k}^2
        +
        22\,\beta^2
        W_2^2(p_k, p_{k+1})
        +
        2\beta^2\, \EE \norm{X_{k+1} - X_k}^2
        \ctag{by \Cref{ineq:triangle-bound-1} and \Cref{ineq:triangle-bound-2}}\\
        &\leq
        \Bigl( 
        \frac{1}{\eta^2} 
        +
        24 \beta^2
        \Bigr)\,
        \EE \norm{X_{k+1} - X_k}^2
        \ctag{since $(X_k, X_{k+1})$ is a coupling of $(\iterate_k, \iterate_{k+1})$}\\
        &\leq
        \frac{25}{\eta^2}\, 
        \EE \norm{X_{k+1} - X_k}^2\,.
        \ctag{since $\beta \leq \eta^{-1}$}
    \end{align*}
    Combining the above with \Cref{ineq:distance-bound}, we obtain that
    \begin{align*}
        \min_{k \in \left\{ 0, \ldots, N-1 \right\}}
        \norm{\nabla_{\BW} \mF(\iterate_k)}_{\iterate_k}^{2}
        &\leq
        \min_{k \in \left\{ 0, \ldots, N-1 \right\}}
        \frac{75}{\eta^2}\,
        \EE \norm{X_{k+1} - X_k}^2
        \leq
        \frac{150\Delta}{\eta N}\,.
    \end{align*}
    Finally, taking $\eta = \frac{1}{\beta}$ and $N \geq \frac{150\beta \Delta}{\varepsilon^2}$, we obtain that
    \[
        \min_{k \in \left\{ 0, \ldots, N-1 \right\}}
        \norm{\nabla_{\BW} \mF(\iterate_k)}_{\iterate_k}^{2} \leq \varepsilon^2\,,
    \]
    as desired.
\end{proof}

\section{Proofs of the noisy algorithm convergence rates}

We once again utilize \Cref{lemma:discrete-evi} to obtain our desired
rates of convergence.
First, we must prove the bound
on $\sigma_k$ for Stochastic FB--GVI given in \Cref{lemma:grad-error-bound-text}.

\subsection{Proof of \Cref{lemma:grad-error-bound-text}}
\label{sec:error-bnd-sto-oracle}

\begin{proof}
    Let $\mu = \mN(m, \Sigma)$ be an element of
    $\PG(\RR^d)$.
    We first note that if $X\sim \mu$, then
    by integration by parts,
    \begin{align}
        \Sigma\, \EE \nabla^2 V (X)
        &=
        \Sigma \int \nabla^2 V \dd{\mu}
        \nonumber\\
        &= -
        \Sigma \int \nabla \mu \otimes \nabla V \ctag{integration by parts}
        \nonumber\\
        &= -\Sigma 
        \int  \nabla \ln \mu \otimes \nabla V \dd{\mu}
        \nonumber\\
        &=
        \int (x - m) \otimes \nabla V \dd{\mu(x)} \ctag{since $-\Sigma \,\nabla \ln \mu(x) = x - m$}
        \nonumber\\
        &=
        \EE [(X - m)\otimes \nabla V(X)]\,.
        \label{eqn:gaussian-ibp}
    \end{align}
    Hence,
    \begin{align*}
        \langle \EE \nabla^2 V (X), \Sigma \rangle
        &=
        \langle \EE [ \Sigma^{-1}\, (X - m) \otimes \nabla V(X)], \Sigma \rangle
        \ctag{by \Cref{eqn:gaussian-ibp}}
        \\
        &=
        \EE 
        \langle 
        \Sigma^{-1}\,(X - m) \otimes \nabla V(X), \Sigma
        \rangle
        \ctag{linearity of expectation and trace}
        \\
        &=
        \EE \langle \nabla V(X), X - m \rangle
        \ctag{cyclicity of trace}\,.
    \end{align*}
    Now, let $(X_k, Z) \sim (\iterate_k, \hat{\pi})$ be optimally coupled for the $W_2$
    distance and independent of $\hat{X}_k$.
    Recall also the Brascamp--Lieb inequality~\citep{brascamplieb}: if $\mu$ is a measure on $\RR^d$ with density $\mu\propto\exp(-W)$, where $W$ is twice continuously differentiable and strictly convex, then for any smooth test function $f : \RR^d\to\RR$ it holds that $\Var_\mu(f) \le \EE\langle \nabla f, (\nabla^2 W)^{-1}\,\nabla f\rangle$.
    In particular, if we take $f = \langle \nabla V, e\rangle$ for a unit vector $e$ and $\mu = p_k$, it follows that $\Var_{p_k}{\langle \nabla V, e\rangle} \le \EE_{p_k}\langle e, \nabla^2 V\,\Sigma_k\,\nabla^2 V\,e \rangle$.
    Summing this inequality as $e$ ranges over an orthonormal basis of $\RR^d$, we obtain
    \begin{align*}
        \EE_{p_k}\|\nabla V - \EE_{p_k}\nabla V\|^2
        &\le \EE_{p_k}\langle [\nabla^2 V]^2, \Sigma_k\rangle\,.
    \end{align*}
    Thus, we get that
    \begin{align*}
        \half\, \sigma_k^2
        &\leq
        \EE \|(\nabla^2 V(\hat{X}_k) - \EE_{\iterate_k} \nabla^2 V)(X_k - m_k)\|^2
        +
        \EE \|\nabla V(\hat{X}_k) - \EE_{\iterate_k} \nabla V\|^2
        \ctag{by triangle inequality}
        \\
        &=
        \bigl\langle 
        \EE_{\iterate_k}[ (\nabla^2 V - \EE_{\iterate_k} \nabla^2 V)^2],
        \Sigma_k 
        \bigr\rangle 
        +
        \EE_{\iterate_k} \norm{\nabla V - \EE_{\iterate_k} \nabla V}^2
        \ctag{since $X_k \indep \hat{X}_k$}
        \\
        &=
        \EE_{\iterate_k}
        \langle \nabla^2 V, \Sigma_k\, \nabla^2 V \rangle
        -
        \langle 
        \EE_{\iterate_k} [\nabla^2 V]^2,
        \Sigma_k 
        \rangle 
        +
        \EE_{\iterate_k} \norm{\nabla V - \EE_{\iterate_k} \nabla V}^2
        \\
        &\leq
        \EE_{\iterate_k}
        \langle \nabla^2 V, \Sigma_k\, \nabla^2 V \rangle
        +
        \EE_{\iterate_k} \norm{\nabla V - \EE_{\iterate_k} \nabla V}^2
        \ctag{
        since $
        \langle 
        \EE_{\iterate_k}[ (\nabla^2 V)^2],
        \Sigma_k 
        \rangle \geq 0$}
        \\
        &\leq
        2\,\EE_{\iterate_k} \langle \nabla^2 V, \Sigma_{k}\, \nabla^2 V \rangle
        \ctag{by Brascamp--Lieb}
        \\
        &\leq
        2\beta\, \EE_{\iterate_k}\langle \nabla^2 V, \Sigma_k \rangle
        \ctag{since $\nabla^2 V \preceq \beta I$ and $\nabla^2 V, \Sigma_k \succeq 0$}
        \\
        &=
        2\beta\, \EE\left\langle \nabla V(X_k), X_k - m_k \right\rangle
        \ctag{by \Cref{eqn:gaussian-ibp}}\\
        &=
        {2\beta\, 
        \underbrace{
        \EE \left\langle \nabla V(Z), Z - \hat{m} \right\rangle
        }_{\mathsf{err}_1}}
        +
        {2\beta\, 
        \underbrace{
        \EE \left\langle \nabla V(X_k) - \nabla V(Z), (X_k - m_k) - (Z - \hat{m}) \right\rangle
        }_{\mathsf{err}_2}}
        \\
        &\qquad{}
        +
        {2\beta\,
        \underbrace{
        \EE \left\langle \nabla V(Z), (X_k - m_k) - (Z - \hat{m}) \right\rangle
        }_{\mathsf{err}_3}}
        +
        {2\beta\, 
        \underbrace{
        \EE \left\langle \nabla V(X_k) - \nabla V(Z), Z - \hat{m} \right\rangle
        }_{\mathsf{err}_4}}\,.
    \end{align*}
    Now, we have the following:
    \begin{align*}
        \mathsf{err}_1
        &= 
        \EE \left\langle \nabla V(Z), Z - \hat{m} \right\rangle
        =
        \langle \EE \nabla^2 V(Z), \hat{\Sigma} \rangle
        \ctag{by \Cref{eqn:gaussian-ibp} and the stationarity conditions in~\eqref{eq:firstorderstationarity}}
        =
        \Tr(I)\\
        &= d\,,
        \\
        \mathsf{err}_2
        &= 
        \EE \left\langle \nabla V(X_k) - \nabla V(Z), (X_k - m_k) - (Z - \hat{m}) \right\rangle
        \\
        &\leq 
        \frac{1}{2\beta}\,
        \EE \norm{\nabla V(X_k) - \nabla V(Z)}^2
        + 
        \frac{\beta}{2}\,
        \EE \norm{(X_k - m_k) - (Z - \hat{m})}^2
        \ctag{Young's inequality}\\
        &\leq
        \beta\, \EE \norm{X_k - Z}^2
        \ctag{since $\nabla V$ is $\beta$-Lipschitz}\\
        &=
        \beta\,
        W_2^2(\mu_k, \hat{\pi})\,,
        \ctag{since $(X_k, Z)$ are optimally coupled}
        \\
        \mathsf{err}_3
        &= 
        \EE \left\langle \nabla V(Z), (X_k - m_k) - (Z - \hat{m}) \right\rangle
        \\
        &\leq
        \frac{1}{4\beta}\,
        \EE \norm{\nabla V(Z)}^2
        +
        \beta\,
        \EE \norm{(X_k - m_k) - (Z - \hat{m})}^2
        \ctag{Young's inequality}
        \\
        &\leq
        \frac{1}{4\beta}\,
        \EE \langle \nabla^2 V(Z)^2, \hat{\Sigma} \rangle
        + 
        \beta\,
        W_2^2(\mu_k, \hat{\pi})
        \ctag{Brascamp--Lieb, optimal coupling of $(X_k, Z)$}
        \\
        &\leq
        \frac{d}{4}
        + 
        \beta\, W_2^2(\mu_k, \hat{\pi})\,,
        \ctag{since $\EE_{\hat{\pi}} \nabla^2 V = \hat{\Sigma}^{-1}$ by \Cref{eq:firstorderstationarity} and $\nabla^2 V\preceq \beta I$}
        \\
        \mathsf{err}_4
        &=
        \EE \left\langle \nabla V(X_k) - \nabla V(Z), Z - \hat{m} \right\rangle
        \\
        &\leq
        \frac{\Tr(\hat{\Sigma})}{d}\,
        \EE \norm{\nabla V(X_k) - \nabla V(Z)}^2
        +
        \frac{d}{4\Tr(\hat{\Sigma})}\,
        \EE \norm{Z - \hat{m}}^2
        \ctag{Young's inequality}
        \\
        &\leq
        \frac{\beta^2\Tr(\hat{\Sigma})}{d}\,
        \EE \norm{X_k - Z}^2
        +
        \frac{d}{4\Tr(\hat{\Sigma})}
        \Tr(\hat{\Sigma})
        \ctag{since $\nabla V$ is $\beta$-Lipschitz}\\
        &\leq
        \frac{\beta^2\Tr(\hat{\Sigma})}{d}\,
        W_2^2(\mu_k, \hat{\pi})
        +
        \frac{d}{4}\,.
    \end{align*}
    Combining these, we obtain that
    \begin{align*}
        \sigma_k^2
        &\leq
        4\beta \sum_{i=1}^{4} \mathsf{err}_i
        \leq
        6\beta d + \Bigl( 8\beta^2 + \frac{4\beta^3 \Tr(\hat{\Sigma})}{d} \Bigr)\, W_2^2(\mu_k, \hat{\pi})
        \leq
        6\beta d + 12\beta^3\lambda_{\max}(\hat{\Sigma})\, W_2^2(\mu_k, \hat{\pi})\,.
        \ctag{since $\hat{\Sigma}^{-1} = \EE_{\hat{\pi}} \nabla^2 V \preceq \beta I$ so $\lambda_{\max}(\hat{\Sigma}) \geq 1/\beta$}
    \end{align*}
    Note that in the strongly convex case, by \Cref{eq:firstorderstationarity}, we obtain that
    \begin{align*}
        \lambda_{\max}(\hat{\Sigma}) = \lambda_{\max}(\EE_{\hat{\pi}}[\nabla^2 V]^{-1}) \leq \frac{1}{\alpha}\,,
    \end{align*}
    so this bound simplifies to
    \begin{align*}
        \sigma_k^2
        &\leq
        6\beta d + \frac{12\beta^3}{\alpha}\, W_2^2(\mu_k, \hat{\pi})\,.
    \end{align*}
    This concludes our proof.
\end{proof}

\subsection{One-step inequality using the bound on \texorpdfstring{$\sigma_k$}{the variance}}

We apply the error bound in \Cref{lemma:grad-error-bound-text}
along with the \acl{OSI} of \Cref{lemma:discrete-evi} with $\nu = \hat{\pi}$ and $\eta \leq \frac{1}{2\beta}$.
This gives us the inequality
\begin{align}
    \EE W_2^2(\iterate_{k+1}, \hat{\pi})
    &\leq
    (1 - \alpha \eta) \EE W_2^2(\iterate_k, \hat{\pi}) - 2\eta\, (\EE \mF(\iterate_{k+1}) - \mF(\hat{\pi})) + 
    2\eta^2\,\EE \sigma_k^2
    \nonumber
    \\
    &\leq
    \bigl(1 - \alpha \eta + 24\beta^3 \eta^2 \lambda_{\max}(\hat{\Sigma}) \bigr)\, \EE W_2^2(\iterate_k, \hat{\pi}) - 2\eta\, (\EE \mF(\iterate_{k+1}) - \mF(\hat{\pi})) + 12\beta \eta^2 d
    \nonumber
    \\
    &\leq
    \exp\bigl( 
    - \alpha \eta + 24\beta^3\eta^2 \lambda_{\max}(\hat{\Sigma})\,
    \bigr)
    \EE W_2^2(\iterate_k, \hat{\pi}) - 2\eta\, (\EE \mF(\iterate_{k+1}) - \mF(\hat{\pi})) + 12\beta\eta^2 d\,.
    \label[Inequality]{ineq:stoch-evi}
\end{align}

\subsection{Proof of \Cref{thm:stoch-wc-rate}}
\label{section:stoch-wc-rate-proof}

\begin{proof}
    Define
    $c \defeq 24\beta^3 \lambda_{\max}(\hat{\Sigma})$.
    Since $V$ is convex by assumption, we may take $\alpha = 0$ in 
    \Cref{ineq:stoch-evi} to obtain that
    \begin{align*}
        2\eta\,(\EE \mF(\iterate_{k+1}) - \mF(\hat{\pi}))
        &\leq
        e^{c\eta^2}\,
        \EE W_2^2 (\iterate_k, \hat{\pi})
        - \EE W_2^2(\iterate_{k+1}, \hat{\pi})
        + 12 \beta\eta^2 d\,.
    \end{align*}
    Define
    $S_N(\eta) \defeq \sum_{k=1}^{N} e^{-kc\eta^2}$.
    We then find that
    \begin{align*}
        \sum_{k=0}^{N-1}
        2\eta\, 
        e^{-(k+1)c\eta^2}\,
        (\EE \mF(\iterate_{k+1}) - \mF(\hat{\pi}))
        &\leq
        \sum_{k=0}^{N-1}
        e^{-(k+1)c\eta^2}
        \,\bigl( 
        e^{c\eta^2}\,
        \EE W_2^2 (\iterate_k, \hat{\pi})
        - \EE W_2^2(\iterate_{k+1}, \hat{\pi})
        + 12 \beta\eta^2 d
        \bigr)\\
        &=
        W_2^2 (\iterate_0, \hat{\pi})
        - 
        e^{-Nc\eta^2}\,
        \EE W_2^2(\iterate_N, \hat{\pi})
        + 12 \beta \eta^2 d
        \sum_{k=0}^{N-1}
        e^{-(k+1)c\eta^2}\\
        &\le
        W_2^2 (\iterate_0, \hat{\pi})
        + 12 \beta \eta^2 d
        S_N(\eta)\,.
    \end{align*}
    Let $\ol{\iterate}$ be drawn randomly from among $\left\{ \iterate_k \right\}_{k=1}^N$, with probability of choosing $\iterate_k$ proportional to $e^{-kc\eta^2}$.
    Then we have that
    \begin{align*}
        \EE \mF(\ol{\iterate}) - \mF(\hat{\pi})
        &=
        \frac{1}{2\eta S_N(\eta)}
        \sum_{k=0}^{N-1}
        2\eta\, e^{-(k+1)c\eta^2}\,
        (\EE \mF(\iterate_{k+1}) - \mF(\hat{\pi}))
        \\
        &\le
        \frac{1}{2\eta S_N(\eta)}\,
        \bigl( 
        W_2^2 (\iterate_0, \hat{\pi})
        + 12 \beta \eta^2 d S_N(\eta)
        \bigr)\\
        &= 
        \frac{W_2^2(\iterate_0, \hat{\pi})}{2\eta S_N(\eta)}
        + 6\beta \eta d\,.
    \end{align*}
    Now, we note that
    \begin{align*}
        S_N(\eta)
        &=  
        \sum_{k=1}^{N} e^{-kc\eta^2}
        \geq
        \sum_{k=1}^{N \land (c\eta^2)^{-1}} e^{-kc\eta^2}
        \geq
        \sum_{k=1}^{N \land (c\eta^2)^{-1}} e^{-1}
        \geq
        \frac{N \land \lfloor (c\eta^2)^{-1}\rfloor}{e}\,.
    \end{align*}
    Thus, we obtain the inequality
    \begin{align*}
        \EE 
        \bigl[ 
        \min_{k \in \left\{ 1, \ldots, N \right\}}
        \mF(\iterate_k)
        \bigr]
        - \mF(\hat{\pi})
        &\leq
        \EE \mF(\ol{\iterate}) - \mF(\hat{\pi})
        \\
        &\leq
        \frac{W_2^2(\iterate_0, \hat{\pi})}{2\eta S_N(\eta)}
        + 6\beta\eta d\\
        &\leq
        \frac{2 W_2^2(\iterate_0, \hat{\pi})}{\eta\, (N \land \lfloor (c\eta^2)^{-1} \rfloor)}
        + 6\beta \eta d\\
        &\lesssim
        \frac{W_2^2(\iterate_0, \hat{\pi})}{\eta N}
        + c\eta W_2^2(\iterate_0, \hat{\pi})
        + \beta \eta d\,.
    \end{align*}
    Hence, taking
    \begin{align*}
        \eta 
        &\asymp \frac{\varepsilon^2}{c W_2^2(\iterate_0, \hat{\pi}) \lor \beta d}
        \asymp \frac{\varepsilon^2}{\beta^3 \lambda_{\max}(\hat{\Sigma})\, W_2^2(\iterate_0, \hat{\pi}) \lor \beta d}\,,
        \\
        N
        &\gtrsim 
        \frac{W_2^2(\iterate_0, \hat{\pi})}{\eta\varepsilon^2}
        \asymp 
        \frac{W_2^2(\iterate_0, \hat{\pi})}{\varepsilon^4}\, \bigl( \beta^3 \lambda_{\max}(\hat{\Sigma})\, W_2^2(\iterate_0, \hat{\pi}) \lor \beta d \bigr)\,,
    \end{align*}
    we get the guarantee $
   \EE \bigl[ \min_{k \in \left\{ 1, \ldots, N \right\}} \mF(\iterate_k) \bigr] - \mF(\hat{\pi}) \leq \varepsilon^2\,$.
\end{proof}

\subsection{Proof of \Cref{thm:stoch-sc-rate}}
\label{section:stoch-sc-rate-proof}

\begin{proof}
    In the strongly convex case where $0 \prec \alpha I\preceq \nabla^2 V$, we
    have the eigenvalue guarantee $\lambda_{\max}(\hat{\Sigma}) \leq \frac{1}{\alpha} I$,
    since $\EE_{\pi}\nabla^2 V = \hat{\Sigma}^{-1}$
    by \eqref{eq:firstorderstationarity}. Hence, under the assumption that $\eta \leq \frac{\alpha^2}{48 \beta^3}$,
    \Cref{ineq:stoch-evi} implies that
    \begin{align}
        \EE W_2^2(\iterate_{k+1}, \hat{\pi})
        &\leq 
        \exp\Bigl( 
        - \alpha \eta + \frac{24\beta^3\eta^2}{\alpha} 
        \Bigr)\,
        \EE W_2^2(\iterate_k, \hat{\pi}) - 2\eta\, (\EE \mF(\iterate_{k+1}) - \mF(\hat{\pi})) + 12\beta \eta^2 d
        \nonumber
        \\
        &\leq
        \exp\left( -\frac{\alpha\eta}{2} \right)\, \EE  W_2^2(\iterate_k, \hat{\pi}) - 2\eta\, (\EE \mF(\iterate_{k+1}) - \mF(\hat{\pi})) + 12\beta\eta^2 d\,.
        \nonumber
    \end{align}
    Since $\mF(\hat{\pi}) \leq \mF(\iterate_{k+1})$, we may iterate this inequality
    to obtain that
    \begin{align*}
        \EE W_2^2(\iterate_{N}, \hat{\pi})
        &\leq
        \exp\Bigl( -\frac{N\alpha\eta}{2} \Bigr)\,
        W_2^2(\iterate_0, \hat{\pi}) + \frac{24 \beta \eta d}{\alpha}\,.
    \end{align*}
    Hence, with the choice
    \begin{align*}
        \eta &\asymp \frac{\varepsilon^2}{\beta d}\,,\quad 
\text{and} \quad      N \gtrsim 
        \frac{1}{\alpha\eta} \log \frac{\alpha W_2^2(\iterate_0, \hat{\pi})}{\varepsilon^2}
        \asymp
        \frac{\beta d}{\alpha \varepsilon^2} \log \frac{\alpha W_2^2(\iterate_0, \hat{\pi})}{\varepsilon^2}\,,
    \end{align*}
    we obtain the guarantee $        \alpha\, \EE W_2^2(\iterate_N, \hat{\pi}) \leq \varepsilon^2\,$.    Now, for the guarantee in KL divergence,
    we ``reinitialize''
    the algorithm with distribution $\iterate_N$
    and apply the convex result of
    \Cref{thm:stoch-wc-rate}.
    This argument is inspired by~\citet{durmus2019analysis}.
    Assuming $\varepsilon$ is sufficiently small, we get that
    \begin{align*}
        c\, \EE W_2^2(\iterate_N, \hat{\pi}) \leq \frac{c\varepsilon^2}{\alpha} \leq \beta d\,,
    \end{align*}
    meaning that for the above choice of $\eta$, we have
    \begin{align*}
        \eta \asymp \frac{\varepsilon^2}{\beta d} \asymp
        \frac{\varepsilon^2}{c \,\EE W_2^2(\iterate_N, \hat{\pi}) \lor \beta d}\,.
    \end{align*}
    Furthermore, for our choice of $N$, we have that
    \begin{align*}
        \frac{\EE W_2^2(\iterate_N, \hat{\pi})}{\varepsilon^4}\, \bigl( c W_2^2(\iterate_0, \hat{\pi}) \lor \beta d \bigr) \leq \frac{\beta d}{\alpha\varepsilon^2} \lesssim N\,.
    \end{align*}
    Thus, applying \Cref{thm:stoch-wc-rate} with our choice of step size $\eta$ and iteration count $N$, we obtain that
    \begin{align*}
        \EE\bigl[ 
        \min_{k \in \left\{ 1, \ldots, 2N \right\}} \mF(\iterate_k) 
        \bigr]
        - \mF(\hat{\pi})
        &\leq
        \EE\bigl[ 
        \min_{k \in \left\{ N+1, \ldots, 2N \right\}} \mF(\iterate_k) 
        \bigr]
        - \mF(\hat{\pi})
        \\
        &\lesssim
        \EE\Bigl[ 
        \frac{W_2^2(\iterate_N, \hat{\pi})}{\eta N} + c \eta\, W_2^2(\iterate_N, \hat{\pi}) + \beta\eta d
        \Bigr]\\
        &\lesssim
        \varepsilon^2\,,
    \end{align*}
    proving our desired result.
\end{proof}

\end{document}